\newcommand{\Cdb}{\ensuremath{\mathbb{C}}}
\newcommand{\Ddb}{\ensuremath{\mathbb{D}}}
\newcommand{\Pdb}{\ensuremath{\mathbb{P}}}
\newcommand{\Rdb}{\ensuremath{\mathbb{R}}}
\newcommand{\Zdb}{\ensuremath{\mathbb{Z}}}
\newcommand{\M}{\mbox{${\mathcal M}$}}
\renewcommand{\O}{\mbox{${\mathcal O}$}}
\renewcommand{\P}{\mbox{${\mathcal P}$}}
\newcommand{\R}{\mbox{${\mathcal R}$}}
\renewcommand{\S}{\mbox{${\mathcal S}$}}
\newcommand{\norm}[1]{\Vert#1\Vert}
\newcommand{\bignorm}[1]{\bigl\Vert#1\bigr\Vert}
\newcommand{\Bignorm}[1]{\Bigl\Vert#1\Bigr\Vert}
\newcommand{\biggnorm}[1]{\biggl\Vert#1\biggl\Vert}
\newtheorem{theorem}{Theorem}[section]
\newtheorem{lemma}[theorem]{Lemma}
\newtheorem{corollary}[theorem]{Corollary}
\newtheorem{proposition}[theorem]{Proposition}
\newtheorem{definition}[theorem]{Definition}
\theoremstyle{remark}
\newtheorem{remark}[theorem]{\bf Remark}
\theoremstyle{definition}
\numberwithin{equation}{section}
\begin{document}

\title[]{$H^\infty$ functional calculus and square function estimates for Ritt operators}

\author{Christian Le Merdy}
\address{Laboratoire de Math\'ematiques\\ Universit\'e de  Franche-Comt\'e
\\ 25030 Besan\c con Cedex\\ France}
\email{clemerdy@univ-fcomte.fr}

\date{\today}

\thanks{The author is supported by the research program ANR 2011 BS01 008 01}

\begin{abstract}
A Ritt operator $T\colon X\to X$ on Banach space is a power bounded
operator satisfying an estimate $n\norm{T^{n} -T^{n-1}}\leq C\,$.
When $X=L^p(\Omega)$ for some $1<p<\infty$, we study the
validity of square functions estimates $\bignorm{\bigl(\sum_k
k\vert T^{k}(x) - T^{k-1}(x)\vert^2\bigr)^{\frac{1}{2}}}_{L^p}\lesssim\norm{x}_{L^p}$
for such operators. We show that $T$ and $T^*$ both satisfy such
estimates if and only if $T$ admits a bounded functional calculus with respect
to a Stolz domain. This is a single operator analog of the famous
Cowling-Doust-McIntosh-Yagi characterization of bounded $H^\infty$-calculus
on $L^p$-spaces by the boundedness of certain Littlewood-Paley-Stein square
functions. We also prove a similar result on Hilbert space. Then
we extend the above to more general Banach spaces,
where square functions have to be defined in terms of certain
Rademacher averages. We focus on noncommutative $L^p$-spaces, where
square functions are quite explicit, and 
we give applications, examples and illustrations on those
spaces, as well as on classical $L^p$.
\end{abstract}

\maketitle

\bigskip\noindent
{\it 2000 Mathematics Subject Classification : 
47A60, 47A99.}

\bigskip

\section{Introduction}
Let $X$ be a Banach space and let $T\colon X\to X$ be a bounded operator.
If $F\subset\Cdb$ is any compact set containing the spectrum of $T$, a natural 
question is whether there is an estimate 
\begin{equation}\label{1Spectral}
\norm{\varphi(T)}\leq 
K\sup\bigl\{\vert \varphi(\lambda))\vert\, :\, \lambda\in F\bigr\}
\end{equation}
satisfied by all rational functions $\varphi$. 
The mapping $\varphi\mapsto \varphi(T)$ on rational functions  
is the most elementary form of a `holomorphic functional calculus' 
associated to $T$ and (\ref{1Spectral}) means that 
this functional calculus is bounded in an appropriate sense.

The most famous such functional calculus estimate
is von Neumann's inequality, which says that if  $F=\overline{\Ddb}$
is the closed unit disc centered at $0$, then (\ref{1Spectral}) holds true
with $K=1$ for any contraction $T$ on Hilbert space. 
Von Neumann's inequality was a source of inspiration for the development 
of various topics around functional calculus estimates on Hilbert space,
including polynomial boundedness, $K$-spectral sets and related similarity problems.
We refer the reader to \cite{BBC, Pa1, Pa2, P4} and the references therein for a large information.
See also \cite{DD, C} for striking results in the case when 
$F$ is equal to the numerical range of $T$.

When $X$ is a non Hilbertian Banach space, our knowledge on operators $T\colon X\to X$
and compact sets $F$ satisfying (\ref{1Spectral})
for some $K\geq 1$ is quite limited. Positive examples are provided by
scalar type operators (see \cite{Dow}).
A more significant observation is that this issue
is closely related to $H^\infty$-functional
calculus associated to sectorial operators and indeed, that
topic will play a key role in this paper. $H^\infty$-functional
calculus was introduced by McIntosh and his co-authors in \cite{CDMY, MI} and was then
developed and applied successfully to various areas, in particular to the study of
maximal regularity for certain PDE's, to harmonic analysis of semigroups,
and to multiplier theory. We refer the reader to \cite{KW} for relevant information.

\bigskip
In this paper we deal with holomorphic functional calculus for Ritt operators. 
Recall that by definition, $T\colon X\to X$ is a Ritt operator provided that $T$ is
power bounded and there exists a constant $C>0$ such that $n\norm{T^{n} -T^{n-1}}\leq C\,$
for any integer $n\geq 1$. In this case, the spectrum of $T$ is included in the closure
$\overline{B_\gamma}$ of a Stolz domain of the unit disc, see Section 2 and 
Figure 1 below for details. In accordance with the preceding discussion,
this leads to the question whether $T$ satisfies an estimate (\ref{1Spectral}) for 
$F=\overline{B_\gamma}$. We will say that $T$ has a bounded 
$H^\infty(B_\gamma)$ functional calculus in this case (this terminology
will be justified in Section 2). The general problem motivating the present work
is to characterize Ritt operators having a bounded 
$H^\infty(B_\gamma)$ functional calculus for some $\gamma\in\bigl(0,\frac{\pi}{2}\bigr)$ and
to exhibit explicit classes of operators satisfying this property.

If $X=H$ is a Hilbert space and $T\colon H\to H$ is a bounded operator, we 
define the `square function'
\begin{equation}\label{1SFH}
\norm{x}_{T}\,=\,\biggl(\sum_{k=1}^{\infty} k\bignorm{T^{k}(x) - T^{k-1}(x)}_H^2\biggr)^{\frac{1}{2}},
\qquad x\in H.
\end{equation}
Likewise for any measure space $(\Omega,\mu)$, for any $1\leq p<\infty$ and for any 
$T\colon L^p(\Omega)\to L^p(\Omega)$, we consider 
\begin{equation}\label{1SFLp}
\norm{x}_{T}\,=\, \biggnorm{\biggl(\sum_{k=1}^{\infty} k\bigl 
\vert T^{k}(x) - T^{k-1}(x)\bigr\vert^2\biggr)^{\frac{1}{2}}}_{L^p(\Omega)},\qquad x\in L^p(\Omega).
\end{equation}

Let $T\colon X\to X$ be a Ritt operator on either $X=H$ or $X=L^p(\Omega)$.
It was implicitely proved in \cite{LMX1} that if $T$ has a bounded $H^{\infty}(B_\gamma)$ functional
calculus for some $\gamma\in \bigl(0,\frac{\pi}{2}\bigr)$, then 
it satisfies a uniform estimate $\norm{x}_{T}\lesssim\norm{x}$.

This  paper has two main purposes. 
First we establish a converse to this 
result and prove the following. (Here 
$p'=\frac{p}{p-1}$ is the conjugate number of $p$.)

\begin{theorem}\label{1MainLp}
Let $T\colon L^p(\Omega)\to L^p(\Omega)$ be a Ritt operator, with $1<p<\infty$. 
The following assertions are equivalent.
\begin{enumerate}
\item [(i)] The operator $T$ admits a bounded $H^\infty(B_\gamma)$ functional 
calculus for some $\gamma\in\bigl(0,\frac{\pi}{2}\bigr)$.
\item [(ii)] The operator $T$ and its adjoint $T^*\colon L^{p'}(\Omega)\to L^{p'}(\Omega)$
both  satisfy uniform estimates
$$
\norm{x}_{T}\,\lesssim\,\norm{x}_{L^p}\qquad\hbox{and}\qquad
\norm{y}_{T^*}\,\lesssim\,\norm{y}_{L^{p'}}
$$
for $x\in L^p(\Omega)$ and $y\in L^{p'}(\Omega)$.
\end{enumerate}
\end{theorem}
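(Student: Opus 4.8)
The plan is to establish $(ii)\Rightarrow (i)$, since the converse is the result from \cite{LMX1} quoted just before the statement. The natural framework is the analogy with the Cowling--Doust--McIntosh--Yagi theory for sectorial operators: a Ritt operator $T$ on the unit disc corresponds, roughly, to the sectorial operator $A=I-T$, and bounded $H^\infty(B_\gamma)$ calculus for $T$ should be equivalent to bounded $H^\infty(\Sigma_\omega)$ calculus for $A$ on a sector. So the first step is to set up this correspondence carefully (this is presumably done in Section~2 of the paper): translate the square function $\norm{x}_T$ in \eqref{1SFLp}, which is built from the discrete differences $T^k-T^{k-1}$, into a continuous square function of the form $\bignorm{(\int_0^\infty\norm{\psi(tA)x}^2\,\frac{dt}{t})^{1/2}}$ for a suitable analytic $\psi$ vanishing appropriately at $0$ and $\infty$, and check that the two are equivalent up to constants. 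The key here is that $k(T^k-T^{k-1})=k T^{k-1}(T-I)$ plays the role of $(tA)e^{-tA}$ or $(tA)\psi(tA)$; one uses the Ritt resolvent estimates and a comparison-of-square-functions lemma to pass between the discrete and continuous versions, and between different choices of $\psi$.

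Once the problem is phrased in terms of a sectorial operator $A$ on $L^p(\Omega)$ satisfying dual square function estimates $\bignorm{(\int_0^\infty\norm{\psi(tA)x}^2\frac{dt}{t})^{1/2}}_{L^p}\lesssim\norm{x}_{L^p}$ and the analogous estimate for $A^*$ on $L^{p'}(\Omega)$, the heart of the matter is the classical duality/interpolation argument for $H^\infty$ calculus on $L^p$-spaces. I would run it as follows. For a bounded holomorphic $\varphi$ on a slightly larger Stolz domain (or sector), choose auxiliary functions $\psi,\widetilde\psi$ with $\int_0^\infty\psi\widetilde\psi\,\frac{dt}{t}=1$ (a Calderón reproducing formula), write $\varphi(A)x$ via the Cauchy/McIntosh integral, and estimate $\langle\varphi(A)x,y\rangle$ by inserting the reproducing formula to get $\int_0^\infty\langle(\varphi\psi)(tA)x,\widetilde\psi(tA^*)y\rangle\frac{dt}{t}$; then apply Cauchy--Schwarz in $t$ pointwise on $\Omega$ and Hölder in $L^p/L^{p'}$ to bound this by $\norm{\varphi}_\infty\,\norm{x}_{S_\psi}\norm{y}_{S_{\widetilde\psi}}\lesssim\norm{\varphi}_\infty\norm{x}_{L^p}\norm{y}_{L^{p'}}$, using the square function hypotheses for $A$ and $A^*$ respectively. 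Taking the supremum over $y$ in the unit ball of $L^{p'}$ yields $\norm{\varphi(A)x}_{L^p}\lesssim\norm{\varphi}_\infty\norm{x}_{L^p}$, which is exactly a bounded $H^\infty$ calculus; translating back via the $T\leftrightarrow A$ dictionary gives bounded $H^\infty(B_\gamma)$ calculus for $T$, with $\gamma$ slightly larger than the angle coming from the square function estimates.

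Several technical points need care and I expect the main obstacle to be the interplay between the \emph{discrete} nature of the Ritt setting and the continuous machinery, rather than the duality argument itself. Concretely: (a) one must justify the Calderón reproducing formula and all the integral manipulations on a dense subspace (e.g.\ the range of some $\theta(A)$, or $\overline{\mathrm{Ran}(A)}$) and then extend by density, which requires knowing $A=I-T$ is sectorial with dense range on the closure of its range, i.e.\ controlling the behaviour of $T$ near the fixed-point space; (b) the equivalence between the ``natural'' square function \eqref{1SFLp} and a square function adapted to a generating function $\psi$ of one's choosing must be proved, which is itself a small square-function-comparison theorem that uses the $R$-boundedness/resolvent estimates available for Ritt operators (or, on $L^p$, a vector-valued Mihlin-type argument); (c) one has to take care that the domain $B_\gamma$ in the conclusion is a genuine Stolz domain and that $\varphi$ is only assumed bounded holomorphic there, so the contour and the $\psi$'s must be chosen to live on an intermediate domain. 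I would isolate (b) as a separate lemma (likely already available in Section~2 or in \cite{LMX1}), handle (a) by the standard decomposition $X=N(A)\oplus\overline{R(A)}$ and noting both summands are complemented on $L^p$, and then the duality computation in the second paragraph is the clean core of the proof. The Hilbert space case is the special case $p=2$ where $T$ and $T^*$ play symmetric roles, so a single square function estimate suffices there; on general $L^p$ both are genuinely needed because the Cauchy--Schwarz/Hölder step is not self-dual.
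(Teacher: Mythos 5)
Your outline for (ii)$\Rightarrow$(i) rests on two ingredients that are precisely the missing ones. First, you propose to translate the discrete square function $\norm{x}_T$ into a continuous square function for the sectorial operator $A=I-T$ and then run the Cowling--Doust--McIntosh--Yagi duality argument, transferring back via the $T\leftrightarrow A$ dictionary. The transfer of \emph{functional calculus} is indeed available (Proposition \ref{4TA}), but the paper explicitly points out that there is apparently no way to compare square functions associated to $T$ with square functions associated to $A$; this discrete-to-continuous comparison is not ``a small square-function-comparison lemma already available in Section 2 or in \cite{LMX1}'' --- it is the central obstruction, and the whole architecture of Sections 5--7 is designed to avoid it by working directly with the discrete square functions. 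Second, your step (b) invokes ``the $R$-boundedness/resolvent estimates available for Ritt operators'', but hypothesis (ii) only gives a Ritt operator, i.e.\ uniformly bounded families, not $R$-bounded ones; on $L^p$ with $p\neq 2$ this is a real difference. A key part of the paper's proof (Theorem \ref{5Automatic}) is that the two square function estimates \emph{automatically} make $T$ an $R$-Ritt operator on commutative $L^p$, and its proof uses property $(\alpha)$ via (\ref{2KK}) together with the passage from the order-one to the order-two square function (Lemma \ref{512}). Nothing in your plan produces this $R$-boundedness, and without it the subsequent duality argument does not close.

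Indeed, the ``clean core'' duality computation hides exactly the point where $\norm{\varphi}_{\infty,B_\gamma}$ must be extracted: after inserting the reproducing formula and applying pointwise Cauchy--Schwarz and H\"older, the first factor is the square function of $x$ associated with the family $(\varphi\psi)(t\,\cdot)$, not with $\psi$ alone, and bounding it by $\norm{\varphi}_{\infty}\norm{x}_{L^p}$ does not follow from the single assumed estimate $\norm{x}_{T}\lesssim\norm{x}_{L^p}$. On $L^p$ this step requires an $R$-boundedness statement with $R$-bound controlled by $\norm{\varphi}_\infty$; in the paper this is Lemma \ref{6Nevanlinna} (valid only once $T$ is known to be $R$-Ritt), which is combined with the discrete reproducing identity of Lemma \ref{6Decomp} and a Rademacher Cauchy--Schwarz argument in Theorem \ref{6FromSFE}. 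Finally, your closing assertion that on Hilbert space a single square function estimate suffices is false: Proposition \ref{7One} exhibits a Ritt operator on $H$ with $\norm{x}_{T,1}\lesssim\norm{x}$ which is not similar to a contraction, hence by Theorem \ref{8Hilbert} has no bounded $H^\infty(B_\gamma)$ functional calculus; both estimates are genuinely needed even for $p=2$.
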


We  also prove a similar result for Ritt operators on Hilbert space.

Second, we investigate relationships between the existence of 
a bounded  $H^{\infty}(B_\gamma)$ functional calculus and adapted  square
function estimates on general Banach spaces. We pay a special attention
to noncommutative $L^p$-spaces and prove square function estimates 
for large classes of Schur multipliers and selfadjoint Markov operators
on those spaces.

Ritt operators can be considered as discrete analogs of sectorial
operators of type $<\frac{\pi}{2}$, as explained e.g. in \cite{Bl1,Bl2}
or \cite[Section 2]{LMX1}. According to this analogy, 
Theorem \ref{1MainLp} and its Hilbertian
counterpart should be regarded as discrete analogs of the 
main results of \cite{CDMY, MI} showing the equivalence 
between the boundedness of $H^\infty$-functional calculus and 
some square function estimates for sectorial operators. 
Likewise, in the noncommutative setting,
our results are both an analog and an extension of 
the main results of the memoir
\cite{JLX}.

The definitions of the 
discrete square functions (\ref{1SFH}) and (\ref{1SFLp}) go back at
least to \cite{S2}, where they were used to study selfadjoint
Markov operators and diffusion semigroups on classical (=commutative)
$L^p$-spaces. They appeared in
the context of Ritt operators in \cite{KP} and \cite{LMX1,LMX2}.

\bigskip
We now turn to a brief description of the paper. 
In Sections 2 and 3, we introduce $H^\infty(B_\gamma)$ functional 
calculus and square functions for Ritt operators, and we prove basic 
preliminary results. Our definition of square functions on general
Banach spaces relies on Rademacher averages. Regarding such averages as
abstract square functions is a well-known principle, see e.g. 
\cite{PX, KaW2, JLX} for illustrations. If $T$ is a Ritt operator, then
$A=I_X - T$ is a sectorial operator and we show in Section 4 that 
$T$ has a bounded $H^\infty(B_\gamma)$ functional calculus for 
some $\gamma<\frac{\pi}{2}$ if and only if $A$ 
has a bounded $H^\infty(\Sigma_\theta)$ functional calculus for 
some $\theta<\frac{\pi}{2}$. This observation, stated as Proposition \ref{4TA},
provides a tool
to transfer bounded $H^\infty$-calculus results from 
the sectorial setting to Ritt operators. There is apparently no similar way to compare square
functions associated to $T$ to square functions associated to
$A$. This is at the root of most of the difficulties in our analysis of Ritt
operators. Proposition \ref{4TA} will be applied
in Section 8, where we give applications and illustrations on
Hilbert spaces, classical $L^p$-spaces, and noncommutative
$L^p$-spaces.

We will make use of $R$-boundedness and the notion of $R$-Ritt
operators. That class was introduced by Blunck \cite{Bl1, Bl2} as
a discrete counterpart of $R$-sectorial operators. Our first main result,
proved in Section 5, says that if a Ritt operator
$T\colon L^p(\Omega)\to L^p(\Omega)$ satisfies condition (ii) in Theorem 
\ref{1MainLp} above, then it is actually an $R$-Ritt operator.
In Section 6, we show that on Banach spaces $X$ with finite cotype, 
any Ritt operator $T\colon X\to X$ with a bounded
$H^\infty(B_\gamma)$ functional calculus satisfies square function 
estimates. This is based on the study of a strong form of $H^\infty$-functional calculus
called `quadratic $H^\infty$-functional calculus', where scalar valued holomorphic functions
are replaced by $\ell^2$-valued ones. Section 7 is devoted to the 
converse problem of whether square function estimates for
$T$ and $T^*$ imply a bounded
$H^\infty(B_\gamma)$ functional calculus. We show that this holds
true whenever $T$ is
$R$-Ritt, and complete the proofs of Theorem \ref{1MainLp} and
similar equivalence results.

\bigskip

We finally give a few notation to be used along this paper. We let $B(X)$ denote
the algebra of all bounded operators on $X$ and we let 
$I_X$ denote the identity operator on
$X$ (or simply $I$ if there is no ambiguity on $X$). 
We let $\sigma(T)$ denote the spectrum of an operator $T$ (bounded or not)
and we let $R(\lambda,T)=(\lambda I_X -T)^{-1}$ denote
the resolvent operator when $\lambda$ belongs to the resolvent set
$\Cdb\setminus\sigma(T)$. Next, we let ${\rm Ran}(T)$ and ${\rm Ker}(T)$ denote the
range and the kernel of $T$, respectively.

For any $a\in\Cdb$ and $r>0$, we let $D(a,r)$ denote the open disc or radius $r$ centered
at $a$. Also, we let $\Ddb=D(0,1)$ denote the open unit disc. For any non empty open
set $\O\subset\Cdb$ and any Banach space $Z$, we let $H^{\infty}(\O;Z)$ denote the
space of all bounded holomorphic functions $\varphi\colon\O\to Z$. This is a Banach space
for the supremum norm
$$
\norm{\varphi}_{H^{\infty}(\footnotesize{\O};Z)}
\,=\,
\sup\bigl\{\norm{\varphi(\lambda)}_Z\, :\, \lambda\in \O\bigr\}.
$$
In the scalar case, we write $H^{\infty}(\O)$ instead of $H^{\infty}(\O;\Cdb)$
and $\norm{\varphi}_{\infty,\footnotesize{\O}}$ instead of 
$\norm{\varphi}_{H^{\infty}(\footnotesize{\O})}$. Finally we let $\P$ denote the
algebra of complex polynomials.

In Theorem \ref{1MainLp} and later on in the paper we use 
the notation $\lesssim$ to indicate an inequality
up to a constant which does not depend on the particular element to which it applies. Then
$A(x) \approx B(x)$ will mean that we both have $A(x)\lesssim B(x)$ and $B(x)\lesssim A(x)$.

\medskip
\section{Ritt operators and their functional calculus}
We start this section
with some classical background on the $H^\infty$-functional calculus associated to
sectorial operators. The construction and basic properties below go back to 
\cite{CDMY,MI}, see also \cite{KaW1,LM1} for complements.

For any $\omega\in (0,\pi)$, we let 
\begin{equation}\label{2Sector}
\Sigma_\omega=\bigl\{z\in\Cdb^*\, :\, \bigl\vert{\rm Arg}(z)\bigr\vert <\omega\bigr\}
\end{equation}
be the open sector of angle $2\omega$ around the positive real axis $(0,\infty)$.

Let $X$ be a Banach space. 
We say that a closed linear operator $A\colon D(A)\to X$ with dense domain $D(A)\subset X$ is sectorial
of type $\omega$ if $\sigma(A)\subset\overline{\Sigma_\omega}$ 
and for any $\nu\in(\omega,\pi)$, the set 
\begin{equation}\label{2Sectorial}
\{zR(z,A)\, :\, z\in\Cdb\setminus \overline{\Sigma_\nu}\}
\end{equation}
is bounded. 

For any $\theta\in (0,\pi)$, let $H^\infty_0(\Sigma_\theta)$ denote the 
algebra of all bounded
holomorphic functions $f\colon\Sigma_\theta\to\Cdb$  for which 
there exists two positive real numbers $s,c>0$ such that
$$
\vert f(z)\vert\leq\, c\,\frac{\vert z\vert^s}{1+\vert z\vert^{2s}}\,,
\qquad z\in\Sigma_\theta.
$$

Let $0<\omega<\theta<\pi$ and let $f\in H^\infty_0(\Sigma_\theta)$. Then we set
\begin{equation}\label{2CauchySec}
f(A)\,=\,\frac{1}{2\pi i}\,\int_{\partial\Sigma_\nu} f(z) R(z,A)\, dz\,,
\end{equation}
where $\nu\in(\omega,\theta)$ and the boundary $\partial\Sigma_\nu$ 
is oriented counterclockwise. The sectoriality condition
ensures that this integral is absolutely convergent and defines an element
of $B(X)$. Moreover by Cauchy's Theorem, this definition does not
depend on the choice of $\nu$. Further the resulting mapping $f\mapsto 
f(A)$ is an algebra homomorphism from $H^\infty_0(\Sigma_\theta)$ into $B(X)$
which is consistent with the usual functional calculus for rational functions.

We say that $A$ admits a bounded $H^\infty(\Sigma_\theta)$ functional calculus
if the latter homomorphism is bounded, that is, there exists a constant $K>0$
such that 
$$
\norm{f(A)}\leq K\norm{f}_{\infty, \Sigma_\theta},\qquad f\in
H^\infty_0(\Sigma_\theta).
$$
If $A$ has a dense range and admits a bounded $H^\infty(\Sigma_\theta)$ 
functional calculus, then the above homomorphism naturally extends to 
a bounded homomorphism $f\mapsto f(A)$ from the whole space
$H^\infty(\Sigma_\theta)$ into $B(X)$.

\bigskip
It is well-known that the above construction can be adapted to various
contexts, see e.g. \cite{Ha} and \cite{FMI}. We shall briefly explain below such a
functional calculus construction 
for Ritt operators. We first recall some background on this class.

We say that an operator $T\colon X\to X$ is a Ritt operator
provided that the two sets 
\begin{equation}\label{2Sets}
\bigl\{T^n\, :\, n\geq 0\bigr\}
\qquad\hbox{and}\qquad
\bigl\{n(T^n-T^{n-1})\, :\, n\geq 1\bigr\}
\end{equation}
are bounded. The following spectral characterization is crucial: $T$ is a Ritt
operator if and only if 
$$
\sigma(T)\subset\overline{\Ddb}
\qquad\hbox{and}\qquad
\bigl\{(\lambda-1)R(\lambda,T)\, :
\, \vert\lambda\vert>1\bigr\}\quad\hbox{is bounded}.
$$
Indeed this condition is often taken as a definition for Ritt
opertors. We refer to \cite{Ly,NZ} for this 
characterization and also to 
\cite{N}, which contains the key argument, and to \cite{Bl1,Bl2}
and \cite[Section 2]{LMX1} for complements. Let 
$$
A=I-T.
$$
It follows from the above referred papers that $T$ is a Ritt operator
if and only if 
\begin{equation}\label{2Ritt}
\sigma(T)\subset \Ddb\cup\{1\}
\qquad\hbox{and}\qquad
A \ \hbox{is a sectorial operator of type}\,<   \tfrac{\pi}{2}\,.
\end{equation}
We will need quantitative versions of the above equivalence property. 
For that purpose we introduce the Stolz domains $B_\gamma$ as on Figure 1 below. 
Namely for any angle $\gamma\in\bigl(0,\frac{\pi}{2}\bigr)$, we let 
$B_\gamma$ be the interior of the convex hull of $1$ and the 
disc $D(0,\sin\gamma)$.

\begin{figure}[ht]
\vspace*{2ex}
\begin{center}
\includegraphics[scale=0.4]{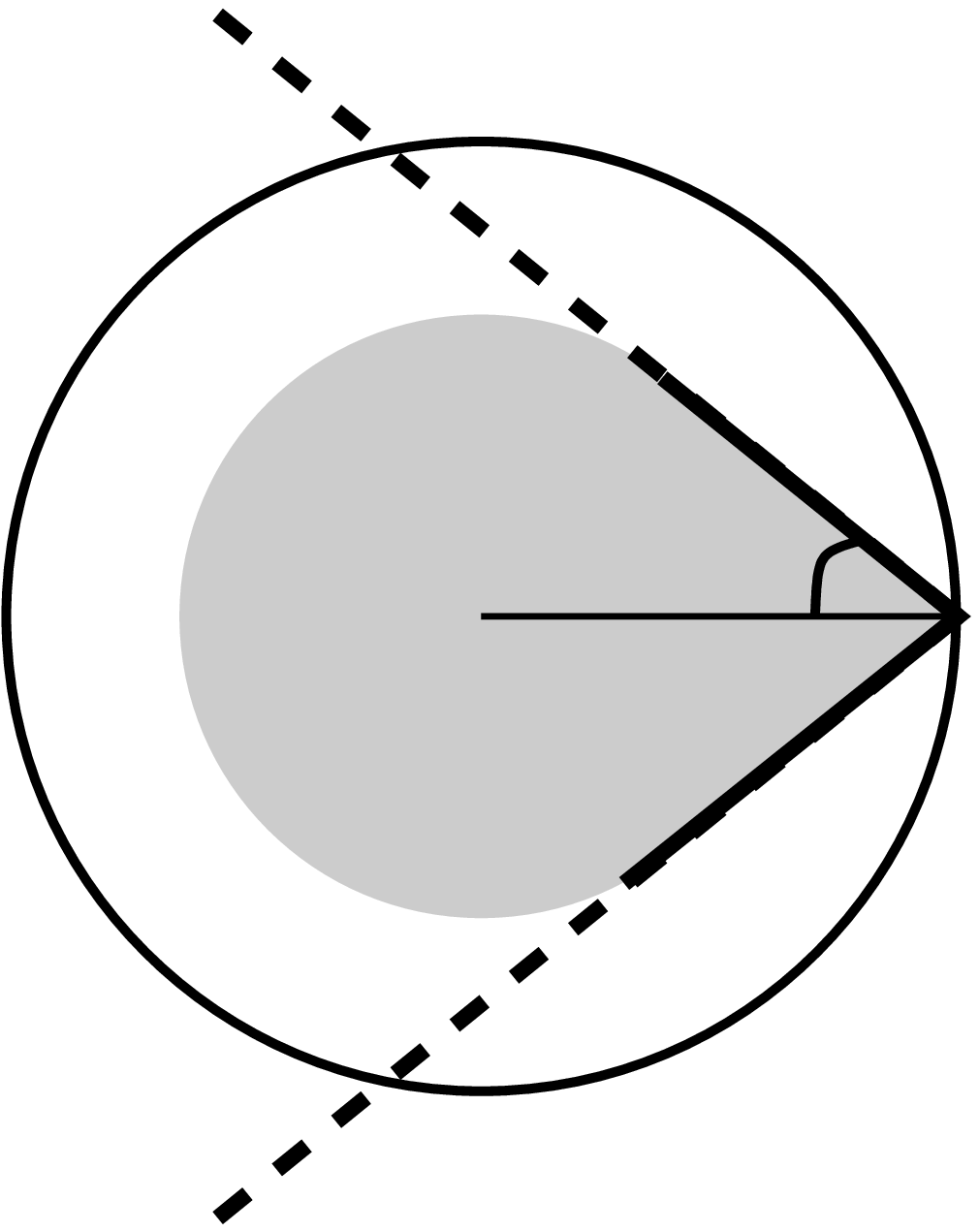}
\begin{picture}(0,0)
\put(-2,65){{\footnotesize $1$}}
\put(-68,65){{\footnotesize $0$}}
\put(-32,81){{\footnotesize $\gamma$}}
\put(-55,85){{\small $B_\gamma$}}
\end{picture}
\end{center}
\caption{\label{f1}}
\end{figure}

\begin{lemma}\label{2Blunck} An operator $T\colon X\to X$ is a Ritt operator if
and only if there exists an angle $\alpha\in \bigl(0,\frac{\pi}{2}\bigr)$ such that
\begin{equation}\label{2Blunck1}
\sigma(T)\subset \overline{B_\alpha}
\end{equation}
and for any $\beta\in\bigl(\alpha,\frac{\pi}{2}\bigr)$, the set
\begin{equation}\label{2Blunck2}
\bigl\{(\lambda-1)R(\lambda,T)\, :\, \lambda\in\Cdb\setminus 
\overline{B_\beta} \bigr\}
\end{equation}
is bounded.

In this case, $A=I-T$ is a sectorial operator of type $\alpha$.
\end{lemma}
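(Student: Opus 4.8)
The plan is to derive the lemma from the two characterisations of Ritt operators already recalled: $T$ is a Ritt operator if and only if $\sigma(T)\subset\overline{\Ddb}$ and $M:=\sup\bigl\{\norm{(\lambda-1)R(\lambda,T)}\, :\, \vert\lambda\vert>1\bigr\}<\infty$, and, equivalently by \eqref{2Ritt}, if and only if $\sigma(T)\subset\Ddb\cup\{1\}$ and $A=I-T$ is sectorial of some type $\omega<\tfrac{\pi}{2}$. I will use throughout the following elementary facts about the Stolz domains, all consequences of $\sin\gamma<1$ and of convexity. For $\gamma\in\bigl(0,\tfrac{\pi}{2}\bigr)$, $\overline{B_\gamma}$ is the convex hull of $1$ and $\overline{D(0,\sin\gamma)}$; it is contained in $\overline{\Ddb}$ and meets $\partial\Ddb$ only at $1$; near its vertex it coincides with a truncated cone $\bigl\{1-\rho e^{i\psi}\, :\, 0\le\rho\le\rho_0,\ \vert\psi\vert\le\gamma\bigr\}$ (the two tangent lines drawn from $1$ to the circle $\vert z\vert=\sin\gamma$ make the angle $\gamma$ with the segment $[0,1]$); the set $1-\overline{B_\gamma}$, being the convex hull of $0$ and the inscribed disc $\overline{D(1,\sin\gamma)}\subset\overline{\Sigma_\gamma}$, is contained in $\overline{\Sigma_\gamma}$; and $\overline{B_\alpha}\subset\overline{B_\beta}$, with even $\overline{B_\alpha}\setminus\{1\}\subset B_\beta$, whenever $\alpha<\beta$.

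Suppose first that $T$ is a Ritt operator; this is the substantial implication. Then $\sigma(T)\subset\overline{\Ddb}$, $M<\infty$, $\sigma(T)\subset\Ddb\cup\{1\}$ (so $\sigma(T)\cap\partial\Ddb\subset\{1\}$), and $A$ is sectorial of some type $\omega<\tfrac{\pi}{2}$, whence $\sigma(T)=1-\sigma(A)\subset 1-\overline{\Sigma_\omega}$. Since $\sigma(T)$ is compact with $\sigma(T)\cap\partial\Ddb\subset\{1\}$, for every $\delta>0$ there is $\epsilon>0$ with $\sigma(T)\subset D(1,\delta)\cup\overline{D(0,1-\epsilon)}$. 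Fix $\alpha_1\in\bigl(\omega,\tfrac{\pi}{2}\bigr)$, choose $\delta>0$ so small that $\bigl\{1-\rho e^{i\psi}:\rho\le\delta,\vert\psi\vert\le\omega\bigr\}\subset\overline{B_{\alpha_1}}$ (possible by the vertex description, as $\omega<\alpha_1$), take the corresponding $\epsilon>0$, and set $\alpha:=\max\bigl(\alpha_1,\arcsin(1-\epsilon)\bigr)\in\bigl(\omega,\tfrac{\pi}{2}\bigr)$. Then $\sigma(T)\cap D(1,\delta)\subset\bigl(1-\overline{\Sigma_\omega}\bigr)\cap D(1,\delta)=\bigl\{1-\rho e^{i\psi}:\rho<\delta,\vert\psi\vert\le\omega\bigr\}\subset\overline{B_{\alpha_1}}\subset\overline{B_\alpha}$, while $\sigma(T)\cap\overline{D(0,1-\epsilon)}\subset\overline{D(0,\sin\alpha)}\subset\overline{B_\alpha}$; hence $\sigma(T)\subset\overline{B_\alpha}$, which is \eqref{2Blunck1}. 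For \eqref{2Blunck2}, fix $\beta\in\bigl(\alpha,\tfrac{\pi}{2}\bigr)$, pick $\nu\in(\omega,\beta)$, and $\delta'>0$ with $\bigl(1-\overline{\Sigma_\nu}\bigr)\cap D(1,\delta')\subset\overline{B_\beta}$ (again by the vertex description, as $\nu<\beta$). Since $\sigma(T)\subset\overline{B_\alpha}\subset\overline{B_\beta}$, $R(\cdot,T)$ is defined on $\Cdb\setminus\overline{B_\beta}$, and I bound $\norm{(\lambda-1)R(\lambda,T)}$ on the three pieces covering that set: if $\vert\lambda\vert>1$ it is at most $M$; if $\vert\lambda\vert\le 1$ and $\vert\lambda-1\vert<\delta'$, then $\lambda\notin\overline{B_\beta}$ forces $\lambda\notin 1-\overline{\Sigma_\nu}$, i.e. $1-\lambda\notin\overline{\Sigma_\nu}$, so, using $R(z,A)=-R(1-z,T)$, $(\lambda-1)R(\lambda,T)=(1-\lambda)R(1-\lambda,A)$ is bounded by sectoriality of $A$ (valid as $\omega<\nu$); and the remaining piece $\bigl\{\vert\lambda\vert\le 1,\ \vert\lambda-1\vert\ge\delta'\bigr\}\setminus\overline{B_\beta}$ has compact closure which, by $\overline{B_\alpha}\setminus\{1\}\subset B_\beta$ together with $\vert\lambda-1\vert\ge\delta'$, is disjoint from $\sigma(T)\subset\overline{B_\alpha}$, so $R(\cdot,T)$ and $\lambda-1$ are bounded there. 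This proves \eqref{2Blunck2}. Finally $A$ is sectorial of type $\omega<\alpha$, hence of type $\alpha$, which is the last assertion.

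For the converse, assume \eqref{2Blunck1} and \eqref{2Blunck2} for some $\alpha$. Then $\sigma(T)\subset\overline{B_\alpha}\subset\overline{\Ddb}$, and since $\vert\lambda\vert>1$ implies $\lambda\notin\overline{\Ddb}\supset\overline{B_\beta}$ for any $\beta\in\bigl(\alpha,\tfrac{\pi}{2}\bigr)$, \eqref{2Blunck2} shows that $\bigl\{(\lambda-1)R(\lambda,T):\vert\lambda\vert>1\bigr\}$ is bounded; so $T$ is a Ritt operator. Moreover $A=I-T$ is sectorial of type $\alpha$: its domain is all of $X$, $\sigma(A)=1-\sigma(T)\subset 1-\overline{B_\alpha}\subset\overline{\Sigma_\alpha}$, and for $\nu\in(\alpha,\pi)$ one picks $\beta\in\bigl(\alpha,\min(\nu,\tfrac{\pi}{2})\bigr)$, so that $1-\overline{B_\beta}\subset\overline{\Sigma_\beta}\subset\overline{\Sigma_\nu}$, i.e. $\overline{B_\beta}\subset 1-\overline{\Sigma_\nu}$; then for $z\notin\overline{\Sigma_\nu}$ the point $\lambda=1-z$ lies outside $\overline{B_\beta}$ and $zR(z,A)=(\lambda-1)R(\lambda,T)$, which is bounded by \eqref{2Blunck2}.

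I expect the only real difficulty to be the geometric bookkeeping: pinning down the shape of $\overline{B_\gamma}$ near its vertex $1$ and near the circle $\vert z\vert=\sin\gamma$, matching it against the sector $1-\overline{\Sigma_\omega}$ and the disc $\overline{D(0,1-\epsilon)}$, and checking that the three regions used for \eqref{2Blunck2} genuinely cover $\Cdb\setminus\overline{B_\beta}$ while each carries a uniform bound. No operator-theoretic input is needed beyond the two characterisations of Ritt operators recalled above.
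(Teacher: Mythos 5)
Your proof is correct and takes essentially the same route as the paper's: both deduce the spectral inclusion $\sigma(T)\subset\overline{B_\alpha}$ from the equivalence (\ref{2Ritt}), and both bound $(\lambda-1)R(\lambda,T)$ outside $\overline{B_\beta}$ by splitting into a far region controlled by a resolvent estimate, a region near $1$ controlled by the sectoriality of $A=I-T$, and a compact intermediate region controlled by continuity of the resolvent. The differences are only cosmetic: you invoke the recalled characterization $\sup_{\vert\lambda\vert>1}\norm{(\lambda-1)R(\lambda,T)}<\infty$ where the paper rederives a bound for $\vert\lambda\vert\geq 2$ from power boundedness via the Neumann series, and you write out in detail the spectral-inclusion compactness argument, the converse implication and the type-$\alpha$ sectoriality of $A$, which the paper states briefly or calls obvious.
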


\begin{proof}
Assume that $T$ is a Ritt operator and let us apply (\ref{2Ritt}). Let $\omega\in\bigl(0,\frac{\pi}{2}\bigr)$
be a sectorial type of $A$. Then $\sigma(T)$ is both included in $\Ddb\cup\{1\}$ and in 
the cone $1-\overline{\Sigma_\omega}$ hence there exists $\omega\leq\alpha<\frac{\pi}{2}$ such that
$\sigma(T)\subset \overline{B_\alpha}$.

Consider the function $h$ on $\Cdb\setminus\sigma(T)$ defined by
$h(\lambda)=(\lambda-1)R(\lambda,T)$. This function is bounded on $\Cdb\setminus\overline{D}(0,2)$. Indeed
if we let $C_0=\sup_{n\geq 0}\norm{T^{n}}$, then writing
$$
R(\lambda,T)=\,\sum_{n=0}^{\infty}\frac{T^n}{\lambda^{n+1}}
$$
when $\vert\lambda\vert>1$, we have
$\norm{R(\lambda,T)}\leq C_0/(\vert\lambda\vert -1)$, and hence
$$
\vert\lambda -1\vert \norm{R(\lambda,T)}\,\leq C_0\,\frac{\vert\lambda\vert +1}{\vert\lambda\vert -1},
\qquad \lambda\in \Cdb\setminus\overline{\Ddb}.
$$

Let $\beta\in\bigl(\alpha,\frac{\pi}{2}\bigr)$. The compact set
\begin{equation}\label{2Lambda}
\Lambda_\beta=\,\bigl\{\lambda\in 1-\overline{\Sigma_\beta}\, :\, {\rm Re}(\lambda)\leq \sin^2\beta\quad
\hbox{and}\quad \sin\beta\leq\vert\lambda\vert\leq 2\bigr\}
\end{equation}
is included in the resolvent set of $T$, hence $h$ is bounded on $\Lambda_\beta$.
Furthermore 
$$
h(\lambda) = (1-\lambda)R\bigl((1-\lambda),A\bigr)
$$
and $A$ is sectorial of type $\alpha$. Consequently $h$ is bounded outside $1-\overline{\Sigma_\beta}$.
Altogether, this shows that $h$ is bounded outside $\overline{B_\beta}$.

The rest of the statement is obvious.
\end{proof}

The above lemma leads to the following.
\begin{definition}\label{2Type}
We say that $T\colon X\to X$ is a Ritt operator  
of type $\alpha\in\bigl(0,\frac{\pi}{2}\bigr)$ if it satisfies the conclusions
of Lemma \ref{2Blunck}. 
\end{definition}

Then we construct an $H^\infty$-functional calculus as follows.
For any $\gamma\in\bigl(0,\frac{\pi}{2}\bigr)$, we let $H^{\infty}_{0}(B_\gamma)
\subset H^{\infty}(B_\gamma)$
be the space of all bounded holomorphic functions $\varphi\colon 
B_\gamma\to \Cdb$ for which 
there exists two positive real numbers $s,c>0$ such that
\begin{equation}\label{2H0}
\vert\varphi(\lambda)\vert\leq c\vert 1-\lambda\vert^s,\qquad \lambda \in B_\gamma.
\end{equation}
Assume that $T$ has type $\alpha$ and $\gamma\in
\bigl(\alpha,\frac{\pi}{2}\bigr)$. Then for any $\varphi\in 
H^{\infty}_{0}(B_\gamma)$, we define
\begin{equation}\label{2Cauchy}
\varphi(T)\,=\,\frac{1}{2\pi i}\,\int_{\partial B_\beta}\varphi(\lambda) R(\lambda,T)\,d\lambda\,,
\end{equation}
where $\beta\in (\alpha,\gamma)$ and the boundary $\partial B_\beta$ is oriented counterclockwise.
The boundedness of $\bigl\{(\lambda-1)R(\lambda,T)\, :\, \lambda\in\partial B_\beta\setminus\{1\}
\bigr\}$ and 
the assumption (\ref{2H0}) imply that this integral is absolutely convergent and defines
an element of $B(X)$. It does not depend on $\beta$ and the mapping
$$
H^{\infty}_{0}(B_\gamma)\longrightarrow B(X),\qquad \varphi\mapsto \varphi(T),
$$
is an algebra homomorphism. Proofs of these facts are similar to the ones in the sectorial case.

We state a technical observation for further use.

\begin{lemma}\label{2rT}
Let $T$ be a Ritt operator of type $\alpha$. Then $rT$ is a Ritt operator for any $r\in(0,1)$ and:
\begin{itemize}
\item [(1)] 
For any $\beta\in\bigl(\alpha,\frac{\pi}{2}\bigr)$,
the set
$$
\bigl\{(\lambda-1)R(\lambda,rT)\, :\, r\in (0,1),\ \lambda\in\Cdb\setminus 
B_\beta\bigr\}
$$
is bounded;
\item [(2)]
For any $\gamma\in\bigl(\alpha,\frac{\pi}{2}\bigr)$ and any $\varphi\in H^{\infty}_{0}(B_\gamma)$,
$\varphi(T)=\lim_{r\to 1^{-}}\varphi(rT)$.
\end{itemize}
\end{lemma}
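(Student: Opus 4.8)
The plan is to verify the two claims of Lemma~\ref{2rT} separately, using Lemma~\ref{2Blunck} and the Cauchy formula~(\ref{2Cauchy}) as the main tools, together with a dominated-convergence argument for part~(2).

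\smallskip
\emph{First}, I would establish that $rT$ is a Ritt operator for every $r\in(0,1)$, with a \emph{uniform} type estimate. The quickest route is through the spectral characterization of Lemma~\ref{2Blunck}. Since $\sigma(rT)=r\sigma(T)\subset r\overline{B_\alpha}$ and $r\overline{B_\alpha}\subset B_\beta$ for every $\beta>\alpha$ (because scaling a Stolz domain by $r<1$ pulls it strictly inside the original one near the vertex $1$), the spectral inclusion~(\ref{2Blunck1}) holds uniformly in $r$. For the resolvent bound I would write $R(\lambda,rT)=\tfrac1r R\bigl(\tfrac\lambda r,T\bigr)$, so that
$$
(\lambda-1)R(\lambda,rT)=\frac{\lambda-1}{r}\,R\Bigl(\frac\lambda r,T\Bigr)
=\frac{\lambda-1}{\lambda-r}\cdot\Bigl(\frac\lambda r-1\Bigr)R\Bigl(\frac\lambda r,T\Bigr).
$$
The second factor is uniformly bounded for $\lambda/r\in\Cdb\setminus\overline{B_\beta}$ by Lemma~\ref{2Blunck}; I need two things: that $\lambda\notin B_\beta$ forces $\lambda/r\notin \overline{B_{\beta'}}$ for some fixed $\beta'\in(\alpha,\beta)$ independent of $r$, and that the scalar factor $\tfrac{\lambda-1}{\lambda-r}$ stays bounded on $\Cdb\setminus B_\beta$ uniformly in $r\in(0,1)$. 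The latter is clear away from $1$; near $\lambda=1$ one uses that on $\partial B_\beta$ the region opens with half-angle $\beta<\tfrac\pi2$, so $|\lambda-r|\geq |\lambda-1|\sin(\tfrac\pi2-\beta)$ roughly, giving the bound. This proves~(1).

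\smallskip
\emph{Second}, for part~(2) I would fix $\beta\in(\alpha,\gamma)$ and use the representation~(\ref{2Cauchy}) for both $\varphi(T)$ and $\varphi(rT)$ along the \emph{same} contour $\partial B_\beta$ — this is legitimate for $r$ close enough to $1$ since, by the uniform spectral inclusion from part~(1), $\sigma(rT)$ stays inside $B_\beta$ once $r>r_0(\beta)$. Then
$$
\varphi(rT)-\varphi(T)=\frac{1}{2\pi i}\int_{\partial B_\beta}\varphi(\lambda)\bigl(R(\lambda,rT)-R(\lambda,T)\bigr)\,d\lambda .
$$
For each fixed $\lambda\in\partial B_\beta\setminus\{1\}$ one has $R(\lambda,rT)\to R(\lambda,T)$ in norm as $r\to1^-$ (resolvents depend continuously on the operator in norm). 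To pass the limit under the integral I invoke the Vitali/dominated-convergence principle: $\|\varphi(\lambda)R(\lambda,rT)\|\lesssim |1-\lambda|^s\cdot\tfrac{1}{|1-\lambda|}$ uniformly in $r$ by part~(1) and~(\ref{2H0}), and $|1-\lambda|^{s-1}$ is integrable on $\partial B_\beta$ for $s>0$. Hence the integral converges to $0$, i.e. $\varphi(rT)\to\varphi(T)$ in $B(X)$.

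\smallskip
I expect the \emph{main obstacle} to be the geometric bookkeeping in part~(1): proving the clean implication ``$\lambda\notin B_\beta\ \Rightarrow\ \lambda/r\notin\overline{B_{\beta'}}$ uniformly in $r<1$'' together with the uniform boundedness of the scalar multiplier $\tfrac{\lambda-1}{\lambda-r}$, both of which hinge on a careful local analysis of the Stolz domain near its vertex $1$. Everything else (spectral mapping under scaling, norm-continuity of resolvents, dominated convergence along $\partial B_\beta$) is routine once this geometric fact is in hand.
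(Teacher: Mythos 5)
Your argument is correct and follows essentially the same route as the paper's proof: the identity $(\lambda-1)R(\lambda,rT)=\frac{\lambda-1}{\lambda-r}\bigl(\frac{\lambda}{r}-1\bigr)R\bigl(\frac{\lambda}{r},T\bigr)$ combined with the uniform boundedness of the scalar factor and of $(\mu-1)R(\mu,T)$ for $\mu\notin\overline{B_\beta}$ gives (1), and dominated convergence in the Cauchy integral (\ref{2Cauchy}) gives (2). The geometric point you single out as the main obstacle is in fact a one-line convexity observation: $\overline{B_\beta}$ is convex with $0$ in its interior, so $r\overline{B_\beta}\subset B_\beta$ for every $r\in(0,1)$, and hence $\lambda\notin B_\beta$ directly yields $\lambda/r\notin\overline{B_\beta}$ (no auxiliary angle $\beta'$, and no restriction to $r>r_0(\beta)$ in part (2), is needed).
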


\begin{proof}
Consider $\beta\in\bigl(\alpha,\frac{\pi}{2}\bigr)$.
It is clear that for any $\lambda\in\Cdb\setminus B_\beta$ and any $r\in (0,1)$,
$\frac{\lambda}{r}\in\Cdb\setminus \overline{B_\beta}$, 
$\lambda\notin\sigma(rT)$ and we have
$$
(\lambda-1)R(\lambda,rT) =\frac{\lambda-1}{\lambda-r}\Bigl(\frac{\lambda}{r}-1\Bigr) R\Bigl(\frac{\lambda}{r},T\Bigr)
$$
Since the sets 
$$
\bigl\{(\lambda-1)(\lambda-r)^{-1}\, :\, r\in (0,1),\ \lambda\in\Cdb\setminus 
B_\beta, \bigr\}
$$
and 
$$
\bigl\{(\mu -1)R(\mu,T) \,:\, \mu\in\Cdb\setminus \overline{B_\beta} \bigr\}
$$ 
are bounded, we obtain (1).
 
Applying Lebesgue's Theorem to (\ref{2Cauchy}), the assertion (2) follows at once.
\end{proof}

Let $H^{\infty}_{0,1}(B_\gamma)\subset H^{\infty}(B_\gamma)$ be the linear span
of $H^{\infty}_{0}(B_\gamma)$ and constant functions. For any $\varphi = c +\psi$,
with $c\in\Cdb$ and $\psi\in H^{\infty}_{0}(B_\gamma)$, set $\varphi(T)= cI_X +\psi (T)$.
Then $H^{\infty}_{0,1}(B_\gamma)\subset H^{\infty}(B_\gamma)$ is a unital algebra
and $\varphi\mapsto \varphi(T)$ is a unital homomorphism from
$H^{\infty}_{0,1}(B_\gamma)$ into $B(X)$. Note that $H^{\infty}_{0,1}(B_\gamma)$
contains rational functions with poles off $\overline{B_\gamma}$, 
and hence polynomials.

For any $T$ as above
and any $r\in (0,1)$, $\sigma(rT) = r\sigma(T)\subset B_\beta$,
hence the definition of $\varphi(rT)$ provided by (\ref{2Cauchy}) is given by the usual 
Dunford-Riesz functional calculus of $rT$. It therefore follows from classical properties
of that functional calculus and the approximation Lemma \ref{2rT} that
for any rational function $\varphi$ with poles off $\overline{B_\gamma}$,
the above definition of $\varphi(T)$ coincides with 
the one obtained by substituing $T$ to the complex variable. This applies
in particular to any $\varphi\in\P$.

Likewise, recall that since $I-T$ is sectorial one can define
its fractional powers $(I-T)^\delta$ for any $\delta>0$. Then 
this bounded operator coincides with $\varphi_\delta(T)$,
where $\varphi_\delta$ is the element of $H^{\infty}_{0}(B_\gamma)$
given by $\varphi_\delta(\lambda) = (1-\lambda)^\delta$. See 
\cite[Section 6]{MI} and \cite[Chapter 3]{Ha} for similar results.

\begin{definition} Let $T$ be a Ritt operator of type $\alpha$
and let $\gamma\in\bigl(\alpha,\frac{\pi}{2}\bigr)$.
We say that $T$ admits a bounded $H^\infty(B_\gamma)$ functional calculus 
if there exists a constant $K>0$ such that 
$$
\norm{\varphi(T)}\leq K\norm{\varphi}_{\infty, B_\gamma},\qquad\varphi\in 
H^{\infty}_{0}(B_\gamma).
$$
\end{definition}

In this case $\varphi\mapsto \varphi(T)$ is a bounded homomorphism on $H^{\infty}_{0,1}(B_\gamma)$.
The next statement shows that the above functional calculus property 
can be tested on polynomials only.

\begin{proposition}\label{2Approx} 
A Ritt operator $T$ has 
a bounded $H^\infty(B_\gamma)$ functional calculus if and only if
there exists a constant $K\geq 1$ such that 
\begin{equation}\label{2Pol}
\norm{\varphi(T)}\leq K\norm{\varphi}_{\infty, B_\gamma}
\end{equation}
for any $\varphi\in\P$.
\end{proposition}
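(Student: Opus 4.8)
The plan is to prove the nontrivial implication: if the polynomial estimate \eqref{2Pol} holds, then $T$ admits a bounded $H^\infty(B_\gamma)$ functional calculus, i.e. the same estimate holds for all $\varphi\in H^\infty_0(B_\gamma)$. The reverse direction is immediate since polynomials lie in $H^\infty_{0,1}(B_\gamma)$ and the functional calculus on that algebra is bounded once it is bounded on $H^\infty_0(B_\gamma)$ (constants contribute $\norm{I_X}=1$, harmless after adjusting $K$). So the heart of the matter is a density/approximation argument showing that polynomials are ``rich enough'' in $H^\infty_0(B_\gamma)$ to control the Cauchy-integral-defined operators $\varphi(T)$.

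The first step is to reduce to $\varphi(rT)$ for $r<1$. By Lemma \ref{2rT}(2) we have $\varphi(T)=\lim_{r\to 1^-}\varphi(rT)$ strongly (indeed in norm), so it suffices to bound $\norm{\varphi(rT)}$ uniformly in $r\in(0,1)$ by $K'\norm{\varphi}_{\infty,B_\gamma}$. For $r<1$ the operator $rT$ has spectrum inside $B_\beta$ for any $\beta\in(\alpha,\gamma)$, so $\varphi(rT)$ is given by the ordinary Dunford--Riesz calculus, and — this is the point — one can hope to recover $\varphi(rT)$ as a limit of $p_n(rT)$ for suitable polynomials $p_n$. Concretely, since $\varphi$ is holomorphic on $B_\gamma\supset\overline{B_\beta}\supset r\sigma(T)$, I would approximate $\varphi$ uniformly on a neighborhood of $r\overline{B_\beta}$ (a compact subset of $B_\gamma$) by polynomials — this is possible by Runge's theorem, since $B_\gamma$ is simply connected, or more concretely via Taylor/Cauchy-kernel truncation — getting polynomials $p_n\to\varphi$ uniformly on that compact set. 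Then $p_n(rT)\to\varphi(rT)$ in $B(X)$ because both are Cauchy integrals over $\partial(r B_\beta)$ (or the resolvent power series converges there). Applying the hypothesis, $\norm{p_n(rT)}\le K\norm{p_n}_{\infty,B_\gamma}$.

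The delicate step is that the polynomial approximants $p_n$ must be controlled in the $H^\infty(B_\gamma)$ norm over the \emph{whole} domain $B_\gamma$, not just on the compact set $r\overline{B_\beta}$; a priori Runge gives uniform convergence only on compacta and says nothing about $\norm{p_n}_{\infty,B_\gamma}$. The standard fix is to use the dilation $\varphi_s(\lambda):=\varphi(1-s(1-\lambda))$ for $s\in(0,1)$ close to $1$: since $B_\gamma$ is star-shaped with respect to $1$ (it is the convex hull of $1$ and the disc $D(0,\sin\gamma)$), the map $\lambda\mapsto 1-s(1-\lambda)$ sends $\overline{B_\gamma}$ into $B_\gamma$, so $\varphi_s\in H^\infty(B_\gamma)$ with $\norm{\varphi_s}_{\infty,B_\gamma}\le\norm{\varphi}_{\infty,B_\gamma}$ and $\varphi_s$ extends holomorphically to a neighborhood of $\overline{B_\gamma}$; moreover $\varphi_s\to\varphi$ uniformly on $B_\gamma$ as $s\to1^-$ (using boundedness of $\varphi$ and, for the decay factor $\vert1-\lambda\vert^s$ in \eqref{2H0}, uniform continuity up to the vertex). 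Now Runge's theorem applied to $\varphi_s$ on the neighborhood of $\overline{B_\gamma}$ produces polynomials $p_n\to\varphi_s$ \emph{uniformly on $\overline{B_\gamma}$}, hence $\norm{p_n}_{\infty,B_\gamma}\to\norm{\varphi_s}_{\infty,B_\gamma}\le\norm{\varphi}_{\infty,B_\gamma}$, while $p_n(T)\to\varphi_s(T)$ in $B(X)$ by the Cauchy-integral representation \eqref{2Cauchy} (dominated convergence, using Lemma \ref{2Blunck} for the integrability of $(\lambda-1)R(\lambda,T)$ on $\partial B_\beta$). This yields $\norm{\varphi_s(T)}\le K\norm{\varphi}_{\infty,B_\gamma}$, and letting $s\to1^-$ with $\varphi_s(T)\to\varphi(T)$ (again by \eqref{2Cauchy} and dominated convergence) gives $\norm{\varphi(T)}\le K\norm{\varphi}_{\infty,B_\gamma}$ for all $\varphi\in H^\infty_0(B_\gamma)$, as desired. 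The main obstacle, then, is purely the geometric/function-theoretic bookkeeping around the vertex $\lambda=1$ of the Stolz domain — ensuring the dilations $\varphi_s$ converge uniformly on all of $B_\gamma$ including near $1$ — but the vanishing condition \eqref{2H0} makes this routine.
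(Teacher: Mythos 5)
You set up the easy direction and the reduction to $\varphi(rT)$ correctly, and you correctly identified the crux: Runge's theorem alone gives no control of $\norm{p_n}_{\infty,B_\gamma}$. But your fix fails at its key geometric claim. The homothety $\lambda\mapsto 1-s(1-\lambda)$ is centered at the \emph{boundary} point $1$ of $B_\gamma$, and $B_\gamma$ has a corner there: near $1$ its boundary consists of two straight segments lying on the tangent lines from $1$ to $D(0,\sin\gamma)$, and these lines are invariant under any homothety centered at $1$. Hence this map sends points of those boundary segments to points of the same segments; it does \emph{not} send $\overline{B_\gamma}$ into $B_\gamma$, and $\varphi_s(\lambda)=\varphi(1-s(1-\lambda))$ is holomorphic only on the enlarged Stolz-type domain $1+s^{-1}(B_\gamma-1)$, which contains no neighborhood of the straight part of $\partial B_\gamma$. (The trick would work for the unit disc, whose boundary is strictly convex at $1$, but not for a Stolz domain.) So you cannot apply Runge on $\overline{B_\gamma}$ to produce polynomials converging uniformly on $\overline{B_\gamma}$ with $\norm{p_n}_{\infty,B_\gamma}$ controlled by $\norm{\varphi}_{\infty,B_\gamma}$ — which is exactly the point the argument was supposed to settle. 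A lesser issue: uniform convergence $\varphi_s\to\varphi$ on all of $B_\gamma$ would require $\varphi$ to extend continuously to $\partial B_\gamma$, which a general element of $H^{\infty}_{0}(B_\gamma)$ does not; fortunately that step is not really needed, since pointwise convergence plus the bound $\vert\varphi_s(\lambda)\vert\leq c\vert 1-\lambda\vert^{s_0}$ suffices for the Cauchy integrals.

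The gap is repairable by dilating about an \emph{interior} point instead of the vertex: since $0\in B_\gamma$ and $B_\gamma$ is convex, $r\overline{B_\gamma}\subset B_\gamma$ for $r<1$. This is in effect what the paper does: fixing $r<r'<1$, it approximates $\varphi$ by polynomials $\varphi_m$ uniformly on the compact set $r'\overline{B_\gamma}$ only, obtains $\varphi_m(rT)\to\varphi(rT)$ from the Dunford--Riesz integrals over $\partial(r'B_\gamma)$, and applies the polynomial hypothesis to the dilated polynomial $\lambda\mapsto\varphi_m(r\lambda)$, whose $H^\infty(B_\gamma)$-norm equals $\norm{\varphi_m}_{\infty,rB_\gamma}\leq\norm{\varphi_m}_{\infty,r'B_\gamma}$ and hence \emph{is} controlled by the uniform convergence; letting $m\to\infty$ and then $r\to1^-$ via Lemma \ref{2rT} (2) finishes the proof. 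If you rerun your scheme with the dilation $\varphi(r\lambda)$ about $0$ in place of $\varphi(1-s(1-\lambda))$ about $1$, it becomes essentially the paper's argument.
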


\begin{proof} The `only if' part is clear from the above discussion. 
To prove the `if' part, assume
(\ref{2Pol}) on $\P$ and consider
$\varphi\in H^{\infty}_{0}(B_\gamma)$. Let $r\in (0,1)$, let 
$r'\in(r,1)$ be an auxiliary real number and let $\Gamma$ be the boundary
of $r' B_\gamma$ oriented counterclockwise.

By Runge's Theorem (see e.g. \cite[Thm 13.9]{Rud}), there exists
a sequence $(\varphi_m)_{m\geq 1}$ of polynomials such that 
$\varphi_m\to \varphi$ uniformly on the compact set $r'\overline{B_\gamma}$.
Since $\sigma(rT)\subset r'B_\gamma$, we deduce that
$$
\varphi_m(rT)=\,\frac{1}{2\pi i}\int_{\Gamma} \varphi_m(\lambda)
R(\lambda, rT)\, d\lambda\ \longrightarrow\,
\frac{1}{2\pi i}\int_{\Gamma} \varphi(\lambda)
R(\lambda, rT)\, d\lambda\,=\varphi(rT),
$$
when $m\to \infty$. By (\ref{2Pol}),
$$
\norm{\varphi_m(rT)}\leq K \norm{\varphi_m}_{\infty,  rB_\gamma}\leq K
\norm{\varphi_m}_{\infty, r'B_\gamma}.
$$
Passing to the limit yields
$$
\norm{\varphi(rT)}\leq K\norm{\varphi}_{\infty, r'B_\gamma}.
$$
Finally letting $r\to 1$ and applying Lemma \ref{2rT}, (2), we deduce 
$\norm{\varphi(T)}\leq K\norm{\varphi}_{\infty, B_\gamma}$. 
\end{proof}

The above result is closely related to the following classical notion.

\begin{definition}\label{2Pb} We say that a bounded operator $T\colon X\to X$ is polynomially bounded
if there is a constant $K\geq 1$ such that
$$
\norm{\varphi(T)}\leq K\norm{\varphi}_{\infty,\Ddb},
\qquad \varphi\in\P.
$$
\end{definition}

Obviously any Ritt operator with a bounded $H^\infty(B_\gamma)$ functional 
calculus is polynomially bounded. See Proposition \ref{6Reduce} below for a partial 
converse. 

According to \cite[Prop. 5.2]{LM2}, there exist Ritt operators on Hilbert space which are not 
polynomially bounded. Thus there exist Ritt operators without any bounded 
$H^{\infty}(B_\gamma)$ functional 
calculus. Note that various such (counter-)examples can be derived from Proposition \ref{4TA} below
or from our Section 8.a.

\begin{remark} Let $T$ be a Ritt operator of type $\alpha$, let $\gamma\in\bigl(\alpha,
\frac{\pi}{2}\bigr)$, and assume that $I-T$ has a dense range. Then $I-T$ is 1-1 
by \cite[Thm. 3.8]{CDMY} and arguing as in \cite{CDMY,MI}, one can 
extend the definition of $\varphi(T)$ to any $\varphi\in H^{\infty}(B_\gamma)$.
Namely let $\psi(z)=1-z$ and for any $\varphi\in H^{\infty}(B_\gamma)$, set
$\varphi(T)= (I-T)^{-1}(\varphi\psi)(T)$, where $(\varphi\psi)(T)$ is defined by
(\ref{2Cauchy}) and $\varphi(T)$ is defined on $D\bigl(\varphi(T)\bigr)=\{x\in X\, :\,
(\varphi\psi)(T)x\in {\rm Ran}(I-T)\}$. It is easy to check that the domain
of $\varphi(T)$ contains ${\rm Ran}(I-T)$, so that $\varphi(T)$ in densely defined,
and that $\varphi(T)$ is closed. Consequently, $\varphi(T)$ is bounded if and only if
$D\bigl(\varphi(T)\bigr)=X$.

Assume that $T$ has a bounded $H^{\infty}(B_\gamma)$ functional calculus. Then 
$\varphi(T)$ is bounded for any $\varphi\in H^{\infty}(B_\gamma)$. Indeed let
$\psi_n(z) = (1-z)((1-z) +n^{-1})^{-1}$ for any integer $n\geq 1$. 
The sectoriality of $(I-T)$ ensures that $(\psi_n(T))_{n\geq 1}$
is bounded. Hence there is  a constant $K>0$ such that
$\norm{(\varphi\psi_n)(T)}\leq K\norm{\varphi}_{\infty, B_\gamma}$ for any $n\geq 1$.
It is easy to check that $(\varphi\psi_n)(T)x\to \varphi(T)x$
for any $x\in{\rm Ran}(I-T)$. This shows the boundedness of 
$\varphi(T)$, with the estimate
$\norm{\varphi(T)}\leq K\norm{\varphi}_{\infty, B_\gamma}$.
\end{remark}

\begin{remark}\label{2Duality}
It is clear that the adjoint $T^*\colon X^*\to X^*$
of a Ritt operator $T\in B(X)$ (of type $\alpha$) is a 
Ritt operator (of type $\alpha$) as well. In this case, $\varphi(T)^*=
\varphi(T^*)$ for any $\varphi\in H^{\infty}_0(B_\gamma)$ with $\gamma>\alpha$. Hence
$T^*$ has a bounded $H^{\infty}(B_\gamma)$ functional calculus if and only if
$T$ has one.
\end{remark}

\medskip
\section{Square functions}
On general Banach spaces, square functions of the form (\ref{1SFH}) or (\ref{1SFLp})
need to  be replaced by suitable Rademacher averages.  This short section is devoted to 
precise definitions of these abstract square functions, 
as well as to relevant properties of Rademacher norms
on certain Banach spaces.

We let $(\varepsilon_k)_{k\geq 1}$ 
be a sequence of independent Rademacher variables on some probability space
$(\M, d\Pdb)$. Given any Banach space $X$, we let ${\rm Rad}(X)$ denote the
closed subspace of the Bochner space $L^{2}(\M ;X)$ spanned 
by the set $\{\varepsilon_k\otimes x\, :\, k\geq 1, x\in X\}$. Thus for any
finite family $(x_k)_{k\geq 1}$ of elements of $X$,
\begin{equation}\label{2Rad}
\Bignorm{\sum_k \varepsilon_k\otimes x_k}_{{\rm Rad}(X)}\,
=\,\biggl(\int_{\footnotesize{\M}}\Bignorm{\sum_k 
\varepsilon_k(u) x_k}_{X}^{2}\, d\Pdb(u)\,\biggr)^{\frac{1}{2}}\,.
\end{equation}
Moreover elements of ${\rm Rad}(X)$ are sums of convergent series of the form 
$\sum_{k=1}^{\infty}\varepsilon_k\otimes x_k\,$.

For any bounded operator $T\colon X\to X$,
for any integer $m\geq 1$ and for any $x\in X$, we set
$$
\norm{x}_{T,m}\,=\,\biggnorm{\sum_{k=1}^{\infty} k^{m-\frac{1}{2}}\,
\varepsilon_k\otimes T^{k-1}(I-T)^m (x)}_{{\rm Rad}(X)}.
$$
More precisely for any $x\in X$ and any integer $k\geq 1$, set 
$x_k= k^{m-\frac{1}{2}}T^{k-1}(I-T)^m (x)$. Then 
$\norm{x}_{T,m}$ is equal to the ${\rm Rad}(X)$-norm of $\sum_{k=1}^{\infty} 
\varepsilon_k\otimes x_k$ if this series converges in $L^2(\M;X)$, and 
$\norm{x}_{T,m}=\infty$ otherwise.

If $X=L^p(\Omega)$ for some $1\leq p<\infty$, then we have an equivalence
\begin{equation}\label{2K}
\Bignorm{\sum_k \varepsilon_k\otimes x_k}_{{\rm Rad}(L^p(\Omega))}\,
\approx\,\Bignorm{\Bigl(\sum_k\vert x_k\vert^2\Bigr)^{\frac{1}{2}}}_{L^p(\Omega)}
\end{equation}
for finite families $(x_k)_k$ of $X$ (see e.g. \cite[Thm. 1.d.6]{LT2}). Hence for any 
$T\colon L^p(\Omega)\to L^p(\Omega)$ and any $m\geq 1$, we have
\begin{equation}\label{2SFLp}
\norm{x}_{T,m}\,\approx\,\Bignorm{\Bigl(
\sum_{k=1}^{\infty} k^{2m-1}\bigl\vert T^{k-1}(I-T)^m(x)\bigr\vert^2\Bigr)^{\frac{1}{2}}}_{L^p(\Omega)},
\qquad x\in L^p(\Omega).
\end{equation}
In particular, the square function $\norm{\cdotp}_T$ defined by (\ref{1SFLp}) is equivalent
to $\norm{\cdotp}_{T,1}$.

Likewise, the Rademacher average (\ref{2Rad}) of a finite sequence $(x_k)_k$ on Hilbert space 
$H$ is equal to $\bigl(\sum_k\norm{x_k}_H^2\bigr)^{\frac{1}{2}}$, hence for 
any $T\in B(H)$, we have
$$
\norm{x}_{T,m}\,=\,\Bigl(\sum_{k=1}^{\infty}k^{2m-1}\bignorm{T^{k-1}(I-T)^m(x)}^2\Bigr)^{\frac{1}{2}},
\qquad x\in H.
$$

Square functions appearing in (\ref{2SFLp}) are analogs of well-known 
square functions associated to sectorial operators
on $L^p$-spaces. Namely let $A$ be a sectorial operator 
of type $<\frac{\pi}{2}$ on $L^p(\Omega)$. Then 
$-A$ generates a bounded analytic semigroup $(e^{-tA})_{t\geq 0}$ 
on $L^p(\Omega)$
and for any integer $m\geq 1$, one may consider
$$
\norm{x}_{A,m}\,=\,\biggnorm{\biggl(\int_{0}^{\infty} t^{2m-1} \bigl\vert A^m e^{-tA}
(x)\bigr\vert^2
\,dt\,\biggr)^{\frac{1}{2}}}_{L^p(\Omega)},\qquad x\in L^p(\Omega).
$$
For any $t>0$, $\frac{\partial^m}{\partial t^m}\bigl(e^{-tA}\bigr)=(-1)^mA^m
e^{-tA}$. Hence if we regard $\bigl(T^{k-1}(I-T)^m\bigr)_{k\geq 1}$ as the 
$m$-th discrete derivative of the sequence $(T^{k-1})_{k\geq 1}$, then
$\norm{\cdotp}_{T,m}$ is the discrete analog of the continuous square 
function $\norm{\cdotp}_{A,m}$. Thus 
Theorem \ref{1MainLp} is a discrete analog of the 
main result of \cite{CDMY} showing the equivalence 
between the boundedness of $H^\infty$-functional calculus and 
square function estimates for sectorial operators. 

Similar comments apply to the Hilbert space case. 

In the sequel, the square functions $\norm{\cdotp}_{T,m}$ will 
be used for Ritt operators (although their definitions make sense
for any operator).

\bigskip
Let $X$ be a Banach space. The space ${\rm Rad}({\rm Rad}(X))$ is the closure 
of finite sums
$$
\sum_{i,j}\varepsilon_i\otimes\varepsilon_j\otimes x_{ij}
$$
in $L^2(\M\times\M;X)$, where $x_{ij}\in X$ for any $i,j\geq 1$. We say that 
$X$ has property $(\alpha)$ if the above decomposition is unconditional, that is,
there exists a constant $C>0$ such that for any finite family $(x_{ij})_{i,j\geq 1}$
of $X$ and any family $(t_{ij})_{i,j\geq 1}$ of complex numbers,
$$
\Bignorm{\sum_{i,j} \varepsilon_i\otimes\varepsilon_j\otimes t_{ij}\,x_{ij}}_{{\rm Rad}({\rm Rad}(X))}
\,\leq\, C\,\sup_{i,j}\vert t_{ij}\vert\,
\Bignorm{\sum_{i,j} \varepsilon_i\otimes\varepsilon_j\otimes x_{ij}}_{{\rm Rad}({\rm Rad}(X))}.
$$
Classical $L^p$-spaces (for $p<\infty$) have property $(\alpha)$, indeed we have an
equivalence 
\begin{equation}\label{2KK}
\Bignorm{\sum_{i,j} \varepsilon_i\otimes\varepsilon_j\otimes x_{ij}}_{{\rm Rad}({\rm Rad}(L^p(\Omega)))}\,
\approx\,\Bignorm{\Bigl(\sum_{i,j}\vert x_{ij}\vert^2\Bigr)^{\frac{1}{2}}}_{L^p(\Omega)}
\end{equation}
for finite families $(x_{ij})_{i,j}$ of $L^p(\Omega)$, which extends (\ref{2K}).
This actually holds true as well for any Banach lattice with a finite
cotype in place of $L^p(\Omega)$.

On the contrary, infinite dimensional noncommutative $L^p$-spaces (for $p\not=2$) do not have
property $(\alpha)$. This goes back to \cite{P0}, where property $(\alpha)$ was introduced.

We shall now supply more precise information, namely the so-called noncommutative Khintchine
inequalilites in one or two variables. In the one-variable case, these inequalities, stated
as (\ref{2KI1}) and (\ref{2KI2}) below are due to Lust-Piquard for $1<p<\infty$ \cite{LP}
and Lust-Piquard and Pisier for $p=1$ \cite{LPP}. The two-variable inequalities
(\ref{2KI3}) and (\ref{2KI4}) are taken from \cite[pp. 111-112]{P1}.

In the sequel we let $M$
be a semifinite von Neumann algebra equipped with a normal semifinite faithful trace and
for any $1\leq p\leq \infty$, we let $L^p(M)$ denote the associated noncommutative $L^p$-space. We
refer the reader  to \cite{PX} for background and general information on these spaces. 
Any element of $L^p(M)$ is a
(possibly unbounded) operator and for any such $x$, the modulus of $x$ used in the next formulas will be
$$
\vert x\vert =(x^*x)^{\frac{1}{2}}.
$$
The following equivalences, valid for finite families of $L^p(M)$,
are the noncommutative counterpart of (\ref{2K}). If 
$2\leq p<\infty$, then
\begin{equation}\label{2KI1}
\Bignorm{\sum_k \varepsilon_k\otimes x_k}_{{\rm Rad}(L^p(M))}\approx\max\Bigl\{\Bignorm{\Bigl(\sum_k\vert x_k\vert^2\Bigr)^{\frac{1}{2}}}_{L^p(M)},\,
\Bignorm{\Bigl(\sum_k\vert x_k^*\vert^2\Bigr)^{\frac{1}{2}}}_{L^p(M)}\Bigr\}.
\end{equation}
If $1\leq p\leq 2$, then
\begin{equation}\label{2KI2}
\Bignorm{\sum_k \varepsilon_k\otimes x_k}_{{\rm Rad}(L^p(M))}\approx\inf\Bigl\{\Bignorm{\Bigl(\sum_k\vert u_k\vert^2\Bigr)^{\frac{1}{2}}}_{L^p(M)} + 
\Bignorm{\Bigl(\sum_k\vert v_k^*\vert^2\Bigr)^{\frac{1}{2}}}_{L^p(M)}\Bigr\},
\end{equation}
where the infimum runs over all possible decompositions
$x_k=u_k+v_k$ in $L^p(M)$.

Let $n\geq 1$ be an integer. The space $L^p(M_n(M))$ associated to 
the von Neumann algebra $M_n(M)$ can be canonically identified with 
the vector space of all $n\times n$
matrices with entries in $L^p(M)$.
The following equivalences 
are the noncommutative counterpart of (\ref{2KK}). 
If $2\leq p<\infty$, then
\begin{align}\label{2KI3}
\Bignorm{\sum_{i,j=1}^n \varepsilon_i\otimes\varepsilon_j\otimes x_{ij}}_{{\rm Rad}({\rm Rad}(L^p(M)))} 
\approx\max\Bigl\{&\Bignorm{\Bigl(\sum_{i,j=1}^n \vert x_{ij}\vert^2\Bigr)^{\frac{1}{2}}}_{L^p(M)},\,
\Bignorm{\Bigl(\sum_{i,j=1}^n\vert x_{ij}^*\vert^2\Bigr)^{\frac{1}{2}}}_{L^p(M)},\\
&\bignorm{[x_{ij}]}_{L^p(M_n(M))},\, \bignorm{[x_{ji}]}_{L^p(M_n(M))}
\Bigr\}.\notag
\end{align}
If $1\leq p\leq 2$, then
\begin{align}\label{2KI4}
\Bignorm{\sum_{i,j=1}^n \varepsilon_i\otimes\varepsilon_j\otimes x_{ij}}_{{\rm Rad}({\rm Rad}(L^p(M)))} 
\approx\inf\Bigl\{&\Bignorm{\Bigl(\sum_{i,j=1}^n \vert u_{ij}\vert^2\Bigr)^{\frac{1}{2}}}_{L^p(M)}\, +\,
\Bignorm{\Bigl(\sum_{i,j=1}^n\vert v_{ij}^*\vert^2\Bigr)^{\frac{1}{2}}}_{L^p(M)}\,\\
&+\,\bignorm{[w_{ij}]}_{L^p(M_n(M))}\,+\, \bignorm{[z_{ji}]}_{L^p(M_n(M))}
\Bigr\},\notag
\end{align}
where the infimum runs over all possible decompositions
$x_{ij}=u_{ij}+v_{ij}+ w_{ij}+z_{ij}$ in $L^p(M)$.

\medskip
\section{A transfer principle from sectorial operators to Ritt operators}
Let $T\colon X\to X$ be a Ritt operator on an arbitrary Banach space.
We noticed in Section 2 that 
$$
A=I-T
$$
is a sectorial operator of type $<\frac{\pi}{2}$.
The following transfer result will be extremely important for applications.
Indeed it allows to apply known results from the theory of $H^\infty$-calculus 
for sectorial operators to our context. This 
principle will be illustrated in Section 8.
The proof is a variant of the one of \cite[Thm. 8.3]{Ha0}, adapted to our
situation (see also 
\cite[Prop. 3.2]{LMX1}).

\begin{proposition}\label{4TA}
The following are equivalent.
\begin{enumerate}
\item [(i)] $T$ admits a bounded $H^{\infty}(B_\gamma)$ functional
calculus for some $\gamma\in \bigl(0,\frac{\pi}{2}\bigr)$.
\item [(ii)] $A$ admits a bounded $H^{\infty}(\Sigma_\theta)$ functional
calculus for some $\theta\in \bigl(0,\frac{\pi}{2}\bigr)$.
\end{enumerate}
\end{proposition}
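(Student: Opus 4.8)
The plan is to exploit the conformal-type correspondence between the Stolz domain $B_\gamma$ and a sector $\Sigma_\theta$ induced by the map $\lambda\mapsto 1-\lambda$. Observe that $1-B_\gamma$ is exactly a truncated sector: it is the interior of the convex hull of $0$ and the disc $D(1,\sin\gamma)$, so it is contained in $\Sigma_\gamma$ and contains $\Sigma_{\gamma'}\cap D(0,\delta)$ for a suitable $\gamma'<\gamma$ and small $\delta>0$, while being a bounded set. Consequently, if $\varphi\in H^\infty_0(B_\gamma)$ then $f(z)=\varphi(1-z)$ is a bounded holomorphic function on $1-B_\gamma$ satisfying $|f(z)|\le c|z|^s$ near $0$; but $f$ is only defined on the bounded set $1-B_\gamma$, not on a full sector, so the two functional calculi are not literally identified and one must pass through a comparison of Cauchy integrals. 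The resolvent identity $R(\lambda,T)=-R(1-\lambda,A)$ together with $\varphi(\lambda)=f(1-\lambda)$ shows that the defining contour integral \eqref{2Cauchy} for $\varphi(T)$ over $\partial B_\beta$ becomes, after the substitution $z=1-\lambda$, the integral $\frac{1}{2\pi i}\int_{\partial(1-B_\beta)} f(z)R(z,A)\,dz$, where $\partial(1-B_\beta)$ is a contour that agrees with $\partial\Sigma_\nu$ near $0$ but is closed off by an arc at distance $\approx 1$ from the origin.

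First I would prove (ii)$\Rightarrow$(i). Assume $A$ has a bounded $H^\infty(\Sigma_\theta)$ calculus for some $\theta<\frac{\pi}{2}$; fix $\gamma$ with $\theta<\gamma<\frac{\pi}{2}$ and a smaller $\theta'$ with $\theta<\theta'<\gamma$ such that $\Sigma_{\theta'}\cap D(0,\delta)\subset 1-B_\gamma$ for some $\delta>0$. Given $\varphi\in H^\infty_0(B_\gamma)$, set $f(z)=\varphi(1-z)$ on $1-B_\gamma$. The point is to extend $f$ (or rather to estimate $\varphi(T)$) without needing $f$ globally: write $\varphi(T)$ as the contour integral over $\partial B_\beta$ and split $\partial(1-B_\beta)$ into the part inside $D(0,\delta)$ (which lies on $\partial\Sigma_\nu$ for $\nu$ between $\theta$ and $\theta'$) and the remaining compact arc $\Gamma_0$ at distance bounded below from $0$. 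On $\Gamma_0$ the resolvent $R(z,A)=-R(1-z,T)=R(\lambda,T)$ is uniformly bounded in operator norm (since $\partial B_\beta\setminus\{1\}$ stays in the resolvent set of $T$ and $(\lambda-1)R(\lambda,T)$ is bounded there), so that piece contributes at most $C\|\varphi\|_{\infty,B_\gamma}$. For the sectorial part, one multiplies $f$ by a fixed cutoff $\chi\in H^\infty_0(\Sigma_{\theta'})$ (e.g. $\chi(z)=\big(\tfrac{z}{(1+z)^2}\big)^s$ truncated) that equals $1$ up to controlled error on $\Sigma_{\theta'}\cap D(0,\delta)$; then $\chi f$ extends to an element of $H^\infty_0(\Sigma_{\theta'})$ with $\|\chi f\|_{\infty,\Sigma_{\theta'}}\lesssim\|\varphi\|_{\infty,B_\gamma}$, and the $H^\infty$-calculus bound for $A$ gives $\|(\chi f)(A)\|\lesssim\|\varphi\|_{\infty,B_\gamma}$. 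A routine residue/contour argument identifies $(\chi f)(A)$ with the sectorial-part integral up to the compact arc already handled. Summing the two pieces gives $\|\varphi(T)\|\lesssim\|\varphi\|_{\infty,B_\gamma}$, which is (i).

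The converse (i)$\Rightarrow$(ii) is completely symmetric, running the same argument with the roles of the sector and the Stolz domain interchanged: given the bounded $H^\infty(B_\gamma)$ calculus for $T$, fix $\theta$ with $\gamma<\theta<\frac{\pi}{2}$, take $f\in H^\infty_0(\Sigma_\theta)$, and localize. Here $\Sigma_{\theta''}\cap D(0,\delta)\subset 1-B_\gamma$ for some $\theta''<\gamma$ and small $\delta$, so for the portion of $\partial\Sigma_\nu$ inside $D(0,\delta)$ we can write $R(z,A)=R(\lambda,T)$ with $\lambda=1-z\in 1-(\Sigma_{\theta''}\cap D(0,\delta))\subset B_\gamma$ and interpret the resulting integral as the Stolz-domain calculus applied to $\varphi(\lambda)=\chi(1-\lambda)f(1-\lambda)$ (again multiplying by a cutoff $\chi$ adapted to $B_\gamma$ to make $\varphi\in H^\infty_0(B_\gamma)$), while the tail of $\partial\Sigma_\nu$ outside $D(0,\delta)$ stays in the resolvent set of $A$ with $\|zR(z,A)\|$ uniformly bounded (sectoriality of $A$ of type $<\frac{\pi}{2}$, Lemma \ref{2Blunck}), so it contributes $\lesssim\|f\|_{\infty,\Sigma_\theta}$ as well. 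The main obstacle — and the only place real care is needed — is the bookkeeping of the two cutoff constructions: one must choose the auxiliary function $\chi$ so that both $\chi f$ lands in the appropriate $H^\infty_0$ class with norm controlled by $\|f\|_\infty$ on the larger region, \emph{and} the error term $f-\chi f$ (supported where the contour has left the small disc) is absorbed into the compact-arc estimate; getting all the angles $\theta<\nu<\theta'<\gamma$ (resp.\ $\gamma<\theta''<\nu<\theta$) and the radius $\delta$ to line up, and checking that the deformation of contours between $\partial B_\beta$ and $\partial\Sigma_\nu\cup\Gamma_0$ is justified by Cauchy's theorem in the common domain of holomorphy, is where the bulk of the routine work lies. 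Since the statement asserts only existence of \emph{some} $\gamma$ (resp. $\theta$), we have ample room to shrink angles, which makes all of these adjustments possible; and the passage to the full algebra $H^\infty_{0,1}$ (constants) is immediate since $1\mapsto I_X$ under both calculi. This also matches the cited template \cite[Thm. 8.3]{Ha0} and \cite[Prop. 3.2]{LMX1}.
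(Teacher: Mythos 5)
Your direction (ii)$\Rightarrow$(i) has a genuine gap, and it sits exactly at the crux of the proposition: the claim that after multiplying by a holomorphic cutoff $\chi\in H^{\infty}_0(\Sigma_{\theta'})$ ``$\chi f$ extends to an element of $H^{\infty}_0(\Sigma_{\theta'})$ with $\norm{\chi f}_{\infty,\Sigma_{\theta'}}\lesssim\norm{\varphi}_{\infty,B_\gamma}$''. The function $f(z)=\varphi(1-z)$ is defined only on the bounded region $\Delta_\gamma=1-B_\gamma$, and holomorphic functions admit no cutoffs: $\chi$ cannot vanish on $\Sigma_{\theta'}\setminus D(0,\delta)$ without vanishing identically, so $\chi f$ is still only defined where $f$ is, and in general $f$ (hence $\chi f$) has no holomorphic extension past $\Delta_\gamma$ at all (take $\varphi$ with $\partial B_\gamma$ as natural boundary), let alone one with sup norm controlled by $\norm{\varphi}_{\infty,B_\gamma}$. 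You also cannot fall back on bounding the ``sectorial part'' of the contour integral by absolute values: near $0$ that only produces a bound involving the constants $c,s$ of (\ref{2H0}), not $\norm{\varphi}_{\infty,B_\gamma}$, which is useless for a bounded functional calculus. The missing idea, which is how the paper (following Haase) proceeds, is to \emph{construct} the needed sectorial function: split $\partial\Delta_\beta=\{\Gamma_1,\Gamma_2\}$ into the two straight segments through $0$ and the circular arc around $1$, and set $f_i(z)=\frac{1}{2\pi i}\int_{\Gamma_i}\frac{f(\lambda)}{\lambda-z}\,d\lambda$. Then $f=f_1+f_2$ on $\Delta_\beta$ by Cauchy's theorem; $f_1$ is holomorphic on $\Cdb\setminus\Gamma_1$ and is shown to belong to $H^{\infty}(\Sigma_\theta)$ with norm $\lesssim\norm{f}_{\infty,\Delta_\gamma}$ (direct estimate off $\Delta_\theta$, and via $f_1=f-f_2$ on $\Delta_\theta$); after subtracting $f_2(0)(1+z)^{-1}$ it lands in $H^{\infty}_0(\Sigma_\theta)$ so hypothesis (ii) applies; and $f_2$, being holomorphic on a neighborhood of $\sigma(A)$, is handled by a Riesz--Dunford integral over $\Gamma_2$ where the resolvent is uniformly bounded (this last piece is the only part your ``compact arc'' estimate correctly parallels). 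Without the Cauchy-transform construction of $f_1$ your argument asserts, rather than proves, the extension it needs.

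Two secondary points. The direction from the $B_\gamma$-calculus of $T$ to the $\Sigma_\theta$-calculus of $A$ is not ``completely symmetric'' and needs none of your machinery: for $f\in H^{\infty}_0(\Sigma_\gamma)$ the pullback $\varphi(\lambda)=f(1-\lambda)$ is already in $H^{\infty}_0(B_\gamma)$ with $\norm{\varphi}_{\infty,B_\gamma}\le\norm{f}_{\infty,\Sigma_\gamma}$, and an exact contour deformation (legitimate because the decay of $f$ at infinity only enters an exact identity, not an estimate) gives $f(A)=\varphi(T)$, whence the bound. Moreover, your proposed tail estimate there is incorrect as stated: with only $\norm{zR(z,A)}$ bounded and $\vert f\vert\le\norm{f}_\infty$, the integral of $\vert f(z)\vert\,\norm{R(z,A)}$ over $\partial\Sigma_\nu\setminus D(0,\delta)$ diverges logarithmically, so it is not $\lesssim\norm{f}_{\infty,\Sigma_\theta}$. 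Finally, a small slip: $R(z,A)=-R(1-z,T)$, the sign being absorbed by $d\lambda=-dz$ and the orientation of the contour.
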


\begin{proof}
It will be convenient to set
$$
\Delta_\gamma = 1-B_\gamma.
$$
for any $\gamma\in\bigl(0,\frac{\pi}{2}\bigr)$. This is a subset of the cone $\Sigma_\gamma$.

Assume (i). To any $f\in H^{\infty}_0(\Sigma_\gamma)$, associate 
$\varphi$ given by $\varphi(\lambda)=f(1-\lambda)$.
Then $\varphi$ is defined on $B_\gamma$, its restriction to that set belongs to 
$H_{0}^{\infty}(B_\gamma)$, and $\norm{\varphi}_{\infty,B_\gamma}=\norm{f}_{\infty,\Delta_\gamma}
\leq\norm{f}_{\infty,\Sigma_\gamma}$. 
Comparing (\ref{2CauchySec}) and (\ref{2Cauchy}) and applying
Cauchy's Theorem, we see that 
$$
f(A)=\varphi(T).
$$
These observations imply that $A$  has a bounded $H^{\infty}(\Sigma_\gamma)$ functional
calculus.

Assume conversely that $A$ admits a bounded $H^{\infty}(\Sigma_\theta)$ functional
calculus for some $\theta$ in $\bigl(0,\frac{\pi}{2}\bigr)$. It follows from
Lemma \ref{2Blunck} that
$$
\sigma(A)\subset\overline{\Delta_\alpha}
$$
for some $\alpha\in \bigl(0,\frac{\pi}{2}\bigr)$. 
Taking $\theta$ close enough to 
$\frac{\pi}{2}$, we may assume that $\alpha<\theta$.

We fix $\gamma\in\bigl(\theta,\frac{\pi}{2}\bigr)$ and choose an arbitrary 
$\beta\in (\theta,\gamma)$. Let $\Gamma_1$ be the juxtaposition of the
segments $[\cos(\beta)e^{i\beta}, 0]$ and $[0, \cos(\beta)e^{-i\beta}]$.
Then let $\Gamma_2$ be the curve going from $\cos(\beta)e^{-i\beta}$
to $\cos(\beta)e^{i\beta}$ counterclockwise along the circle of center 1 and
radius $\sin(\beta)$. Thus 
\begin{equation}\label{4Juxt}
\partial \Delta_\beta\,=\,\bigl\{\Gamma_1,\Gamma_2\bigr\},
\end{equation}
the juxtaposition of $\Gamma_1$ and $\Gamma_2$ (see Figure 2 below).

\begin{figure}[ht]
\begin{center}
\includegraphics[scale=0.4]{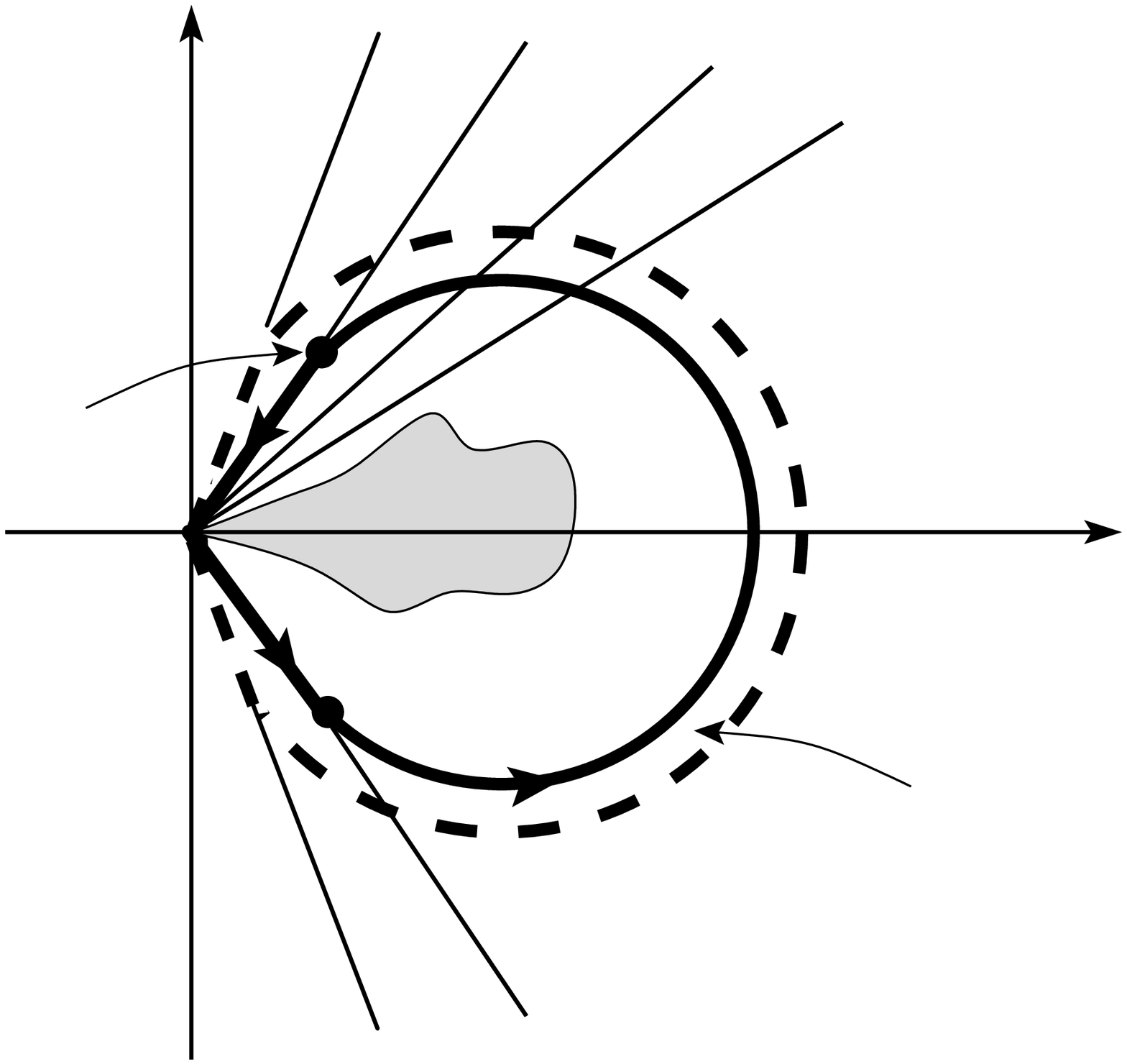}
\begin{small}
\begin{picture}(0,0) 
\put(-143,190){$\gamma$}
\put(-113,186){$\beta$}
\put(-80,181){$\theta$}
\put(-56,171){$\alpha$} 
\put(-47,43){$\Delta_\gamma$} 
\put(-158,76){$\Gamma_1$} 
\put(-112,61){$\Gamma_2$} 
\put(-148,102){$\sigma(A)$}
\put(-245,116){$\cos(\beta){\rm e}^{\rm i\beta}$}
\put(-187,88){$0$}  
\end{picture}
\end{small}
\end{center}
\caption{\label{f2}}
\end{figure}

Let $\varphi\in H_{0}^{\infty}(B_\gamma)$ and let $f\colon \Delta_\gamma\to\Cdb$ be 
the holomorphic function defined by
\begin{equation}\label{4Varphi}
f(z)=\varphi(1-z),\qquad z\in \Delta_\gamma.
\end{equation}
Then again we have $\norm{f}_{\infty,\Delta_\gamma} = \norm{\varphi}_{\infty, B_\gamma}$,
moreover there exist two positive constants $c,s>0$ such that
\begin{equation}\label{4H0}
\vert f(z)\vert\leq c\vert z\vert^s,\qquad z\in \Delta_\gamma.
\end{equation}

We may define 
$f_1\colon\Cdb\setminus\Gamma_1\to \Cdb\,$ and 
$f_2\colon\Cdb\setminus\Gamma_2\to \Cdb\,$ by letting
\begin{equation}\label{4f1f2}
f_1(z)\,=\,\frac{1}{2\pi i}\,\int_{\Gamma_1}\frac{f(\lambda)}{\lambda-z}\, d\lambda
\qquad\hbox{and}\qquad
f_2(z)\,=\,\frac{1}{2\pi i}\,\int_{\Gamma_2}\frac{f(\lambda)}{\lambda-z}\, d\lambda\,.
\end{equation}
Clearly these functions are holomorphic on their domains.
According to $(\ref{4Juxt})$ and Cauchy's Theorem, we have
\begin{equation}\label{4Cauchy}
\forall\, z\in \Delta_\beta,\qquad f(z)=f_1(z)+f_2(z).
\end{equation}
Since the distance between $\Gamma_1$ and $\Sigma_\theta\setminus\Delta_\theta$ 
is strictly positive and $\Gamma_1\subset \Delta_\gamma$, 
there is a constant $C_1\geq 0$ (not depending on $f$) such that
\begin{equation}\label{4C1}
\forall\, z\in \Sigma_\theta\setminus\Delta_\theta,\qquad \vert f_1(z)\vert\leq C_1
\norm{f}_{\infty,\Delta_\gamma}.
\end{equation}
Likewise there is a constant $C_2\geq 0$ (not depending on $f$) such that
$$
\forall\, z\in \Delta_\theta,\qquad \vert f_2(z)\vert\leq C_2
\norm{f}_{\infty,\Delta_\gamma}.
$$
Combining with (\ref{4Cauchy}), this yields
$$
\forall\, z\in \Delta_\theta,\qquad \vert f_1(z)\vert\leq (1+C_2)
\norm{f}_{\infty,\Delta_\gamma}.
$$
Together with (\ref{4C1}) this shows that $f_1\in H^{\infty}(\Sigma_\theta)$
and that with $C_3=\max\{C_1, 1+C_2\}$, we have 
\begin{equation}\label{4Key}
\norm{f_1}_{\infty,\Sigma_\theta}\leq C_3\norm{f}_{\infty,\Delta_\gamma}.
\end{equation}
Now let $g\colon \Sigma_\theta\to\Cdb$ be defined by
$$
g(z) = f_1(z) +\,\frac{f_2(0)}{1+z}. 
$$
According to the definition of $f_1$ given by (\ref{4f1f2}), $zf_1(z)$ is bounded when $\vert z\vert\to\infty$.
Hence $zg(z)$ is bounded on $\Sigma_\theta$. Further, $f_2$ is defined 
about $0$, hence $\vert f_2(z) -f_2(0)\vert\lesssim\vert z\vert$ on $\Delta_\theta$.
By (\ref{4Cauchy}), we have 
$$
g(z) = f(z) + \Bigl(\frac{f_2(0)}{1+z} -f_2(z)\Bigr) = f(z) + \bigl(f_2(0) - f_2(z)\bigr) \,
-\,f_2(0)\,\frac{z}{1+z}
$$
on $\Delta_\theta$.
Applying the above estimate and (\ref{4H0}), we deduce that
$\vert g(z)\vert\lesssim \max\{\vert z\vert^s,\vert z\vert\}$ on $\Delta_\theta$.
These estimates show that $g$ belongs to 
$H^{\infty}_{0}(\Sigma_\theta)$. 
We may therefore compute $g(A)$ by means of (\ref{2CauchySec}), and hence $f_1(A)$ by
$$
f_1(A) = g(A)\,-\,f_2(0)(I+A)^{-1}.
$$
From the assumption (ii), we get a constant $C_4\geq 0$ (not depending on $f$) such that
$$
\norm{f_1(A)}\leq C_4\norm{f_1}_{\infty,\Sigma_\theta}.
$$
Combining with (\ref{4Key}), we deduce
$$
\norm{f_1(A)}\leq C_3 C_4\norm{f}_{\infty,\Delta_\gamma}.
$$
The holomorphic function $f_2$ is defined on an open neighborhood of
the spectrum $\sigma(A)$. Hence $f_2(A)$ may be defined 
by the classical Riesz-Dunford functional calculus. Then by Fubini's Theorem
and (\ref{4f1f2}), we have
$$
f_2(A)\,=\,\frac{1}{2\pi i}\,\int_{\Gamma_2} f(\lambda)R(\lambda,A)\, d\lambda\,.
$$
Consequently,
$$
\norm{f_2(A)}\,\leq\,\frac{1}{2\pi}\,\int_{\Gamma_2}\vert f(\lambda)\vert\norm{R(\lambda,A)}\, \vert d\lambda\vert
\,.
$$
We deduce that there is a constant $C_5\geq 0$ (not depending on $f$) such that
$$
\norm{f_2(A)}\,\leq C_5\norm{f}_{\infty,\Delta_\gamma}.
$$
Using (\ref{4Varphi}) and (\ref{4Cauchy}), it is easy to check that
$$
\varphi(T)\,=\,f_1(A)+f_2(A).
$$
We deduce (with $C=C_3C_4 +C_5$) an estimate
$$
\norm{\varphi(T)}\leq C\norm{\varphi}_{\infty, B_\gamma},
$$
which shows the boundedness of the $H^\infty(B_\gamma)$ functional calculus.
\end{proof}

\begin{remark}
We mention another (easier) transfer principle. Let $(T_t)_{t\geq 0}$
be a bounded analytic semigroup on $X$, and let $-A$ denote its infinitesimal 
generator. For any fixed $t\geq 0$,
$T_t$ is a Ritt operator; this is easy to check, see \cite[Section 3]{V2} for 
more on this. Writing $\varphi(T_t)=f(A)$ with $f(z)=\varphi(e^{-tz})$, one shows
that if $A$ admits a bounded $H^{\infty}(\Sigma_\theta)$
functional calculus for some $\theta\in\bigl(0,\frac{\pi}{2}\bigr)$, then
$T_t$ admits a bounded $H^\infty(B_\gamma)$ functional calculus for some
$\gamma\in\bigl(0,\frac{\pi}{2}\bigr)$.
\end{remark}

\medskip
\section{$R$-boundedness and $R$-Ritt operators}
This section starts with some background on $R$-boundedness, a notion
which -by now- plays a prominent role in many questions concerning functional calculi,
see in particular \cite{KaW1, KaW2, W1}. $R$-boundedness was introduced
in  \cite{BG} and significantly developed in \cite{CPSW}. The resulting 
notion of $R$-Ritt operator (see below) was first studied by
Blunck \cite{Bl1,Bl2}. 

Let $X$ be a Banach space and let $E\subset B(X)$ be a set of 
bounded operators on $X$. We say that $E$ is $R$-bounded provided that
there exists a constant $C\geq 0$ such that for any finite family
$(T_k)_k$ of $E$ and any finite family $(x_k)_k$ of $X$,
$$
\Bignorm{\sum_k\varepsilon_k\otimes T_k(x_k)}_{{\rm Rad}(X)}\,
\leq\,C
\Bignorm{\sum_k\varepsilon_k\otimes x_k}_{{\rm Rad}(X)}.
$$
In this case, we let $\R(E)$ denote the smallest possible $C$.
Any $R$-bounded set $E$ is bounded, with $\norm{T}\leq\R(E)$ for any
$T\in E$. If $X=H$ is a Hilbert space, the 
converse holds true, because of the isometric isomorphism
${\rm Rad}(H)=\ell^2(H)$. But if $X$ is not isomorphic
to a Hilbert space, then the unit ball of $B(X)$ is not $R$-bounded
(see \cite{AB}).

We will use the following convexity result.
This is a well-known consequence of \cite[Lem. 3.2]{CPSW}, see also \cite[Lem. 4.2]{JLX}.

\begin{lemma}\label{5Convexity} Let $J\subset \Rdb$ be an interval, let
$E\subset B(X)$ be an $R$-bounded set and let $K>0$ be a constant. Then the set
$$
E_K\,=\,\biggl\{\int_J h(t)F(t)\, dt\ \Bigl\vert\, F\colon J\to E\ \hbox{is continuous},\
h\in L^1(J;dt)\ \hbox{and}\ \int_J\vert h(t)\vert\, dt\,\leq K\biggr\}
$$
is $R$-bounded, with $\R(E_K)\leq 2K\R(E)$.
\end{lemma}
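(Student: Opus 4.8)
The plan is to reduce the statement to the definition of $R$-boundedness by discretising the integral and using a standard averaging (convexity) trick. First I would show that it suffices to treat finite convex combinations: if $F\colon J\to E$ is continuous and $h\in L^1(J;dt)$ with $\int_J|h(t)|\,dt\leq K$, then by Riemann approximation $\int_J h(t)F(t)\,dt$ is a limit (in operator norm, hence preserving $R$-bounds on the closure) of finite sums $\sum_j h(t_j)F(t_j)\,|\Delta_j|$. Writing $h(t_j)=|h(t_j)|\omega_j$ with $|\omega_j|=1$ and absorbing the unimodular scalars $\omega_j$ into $E$ (note $E_K$ is unchanged if we replace $E$ by $\{\omega T : |\omega|=1,\ T\in E\}$, whose $R$-bound is the same as $\R(E)$ since multiplying each $T_k$ by a unimodular scalar does not change a Rademacher average), each such sum has the form $\sum_j \lambda_j S_j$ with $\lambda_j\geq 0$, $\sum_j\lambda_j\leq K$, and $S_j\in E$.

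Next I would invoke the elementary fact that for an $R$-bounded set $E$, its absolutely convex hull has $R$-bound at most $2\R(E)$: for any operators $S_j\in E$, scalars $\lambda_j\geq 0$ with $\sum_j\lambda_j\leq K$, and vectors $x_k\in X$, one estimates
$$
\Bignorm{\sum_k\varepsilon_k\otimes\Bigl(\sum_j\lambda_j S_j\Bigr)(x_k)}_{\Rad(X)}
=\Bignorm{\sum_k\varepsilon_k\otimes\int_{\Lambda}S_{j(s)}(x_k)\,d\mu(s)}_{\Rad(X)},
$$
where $\mu$ is a positive measure of total mass $\leq K$ on an index set; pulling the integral out by the triangle inequality in $\Rad(X)$ and applying the $R$-boundedness of $E$ pointwise gives the bound $K\,\R(E)\,\norm{\sum_k\varepsilon_k\otimes x_k}_{\Rad(X)}$. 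The factor $2$ rather than $1$ comes in when one must additionally handle the scalars $\omega_j$ (or a signed rather than positive $h$): splitting $h$ into real and imaginary, or positive and negative parts, or more efficiently using the contraction principle for Rademacher averages, produces the constant $2K\R(E)$ quoted from \cite[Lem.\ 3.2]{CPSW}.

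Finally I would assemble these: given the integral operator $\int_J h(t)F(t)\,dt$, approximate it in $B(X)$ by the finite sums above, observe each finite sum lies in the absolutely convex hull of the $R$-bounded set $\{\omega T:|\omega|=1,T\in E\}$ scaled by $K$, apply the convex-hull bound to get $\R$ of the set of such sums at most $2K\R(E)$, and then pass to the limit using that an operator-norm limit of a net of operators each satisfying the defining inequality of $R$-boundedness with constant $C$ again satisfies it with constant $C$ (the inequality involves only finitely many operators and vectors at a time and both sides are continuous in operator norm). I expect the only delicate point to be the bookkeeping in the approximation step — making sure the Riemann sums converge in operator norm uniformly enough and that the unimodular coefficients are handled so that the final constant is exactly $2K\R(E)$ rather than something larger; everything else is a direct appeal to the contraction principle and the triangle inequality in $\Rad(X)$.
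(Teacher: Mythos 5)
The paper gives no proof of this lemma at all --- it simply quotes \cite[Lem. 3.2]{CPSW} --- and your overall route (absorb the phases of $h$, reduce to convex combinations of elements of $E$, bound the convex hull, pass to limits) is exactly the standard argument behind that citation; your closing observation that the defining inequality of $R$-boundedness involves only finitely many operators and vectors, and hence passes to strong (in particular operator-norm) limits, correctly handles the closure. Two intermediate claims need repair, one harmless and one not. The harmless one: Riemann sums $\sum_j h(t_j)F(t_j)\vert\Delta_j\vert$ are not meaningful for $h$ merely in $L^1$ (point values of $h$ are undefined); either approximate $h$ in $L^1$-norm by continuous functions first, or argue directly that a Bochner integral, against a measure of total mass $\leq K$, of a function with values in $\{\omega S\,:\,\vert\omega\vert=1,\ S\in E\}$ lies in $K$ times the norm-closed absolutely convex hull of $E$. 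Relatedly, absorbing the unimodular scalars $\omega_j$ into $E$ does \emph{not} leave the $R$-bound unchanged, as your parenthesis asserts: multiplying the $k$-th term of a Rademacher sum by distinct complex phases can change its norm, and this costs a factor $\leq 2$ by the contraction principle --- which, as you say later, is precisely where the $2$ in $2K\R(E)$ comes from, so the bookkeeping survives even though that justification does not.

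The genuine gap is in your displayed estimate for the convex-hull step. You test the inequality with a single combination $\sum_j\lambda_j S_j$ applied to \emph{every} $x_k$; pulling the integral out and using $R$-boundedness pointwise then only bounds the operator norm of that one element by $K\R(E)$. A uniform norm bound is far weaker than $R$-boundedness (the paper recalls that the unit ball of $B(X)$ is not $R$-bounded unless $X$ is isomorphic to a Hilbert space). What must be estimated is $\bignorm{\sum_k\varepsilon_k\otimes T_kx_k}_{{\rm Rad}(X)}$ for finitely many \emph{different} elements $T_k=\sum_j\lambda_{kj}S_{kj}$ of the scaled convex hull, each paired with its own vector $x_k$. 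The standard fix is an averaging argument: write $m_k=\sum_j\lambda_{kj}\leq K$, realize each $T_kx_k$ as the expectation of $m_kS_{k,\iota_k}x_k$ for independent random indices $\iota_k$ with $\Pdb(\iota_k=j)=\lambda_{kj}/m_k$, pull the expectation out of the ${\rm Rad}(X)$-norm by Jensen's inequality, apply the $R$-boundedness of $E$ to the family $(S_{k,\iota_k})_k$ for each fixed outcome, and remove the scalars $m_k/K\in[0,1]$ by the real contraction principle; this gives the bound $K\R(E)$ for the set of such combinations, and the complex phases then produce $2K\R(E)$. With this step supplied (or simply quoted from \cite[Lem. 3.2]{CPSW}, as the paper does), the rest of your argument is sound.
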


A sectorial operator $A$ on $X$ is called $R$-sectorial of 
$R$-type $\omega$ provided that $\sigma(A)\subset\overline{\Sigma_\omega}$ 
and for any $\nu\in(\omega,\pi)$, the set 
(\ref{2Sectorial}) is $R$-bounded. 

Likewise, a Ritt operator $T$ on $X$ is called $R$-Ritt 
provided that the two sets in (\ref{2Sets}) are $R$-bounded.
The following is an `$R$-bounded' version of (\ref{2Ritt}) and 
Lemma \ref{2Blunck}.
We refer to \cite{Bl1} for closely related results.

\begin{lemma}\label{5Blunck} Let $T\colon X\to X$ be a Ritt operator and
let $A=I-T$. The following are equivalent.
\begin{itemize}
\item [(i)] $T$ is $R$-Ritt.
\item [(ii)] $A$ is $R$-sectorial of $R$-type $<\frac{\pi}{2}$.
\item [(iii)] There exists an angle $\alpha\in \bigl(0,\frac{\pi}{2}\bigr)$ such that
$\sigma(T)\subset \overline{B_\alpha}$
and for any $\beta\in\bigl(\alpha,\frac{\pi}{2}\bigr)$, the set
\begin{equation}\label{2Blunck2}
\bigl\{(\lambda-1)R(\lambda,T)\, :\, \lambda\in\Cdb\setminus 
\overline{B_\beta} \bigr\}
\end{equation}
is $R$-bounded. 
\end{itemize}
\end{lemma}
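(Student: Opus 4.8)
The plan is to prove Lemma~\ref{5Blunck} by establishing the cycle of implications (i) $\Rightarrow$ (ii) $\Rightarrow$ (iii) $\Rightarrow$ (i), mimicking the scalar-valued arguments of Lemma~\ref{2Blunck} and (\ref{2Ritt}) but systematically upgrading ``bounded'' to ``$R$-bounded''. Throughout I will use the permanence properties of $R$-boundedness (stability under sums, products, absolutely convex hulls, and strong limits) together with the convexity result Lemma~\ref{5Convexity}, which is precisely what is needed to control Cauchy integrals of resolvents.

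First I would treat (i) $\Rightarrow$ (ii). Since $T$ is $R$-Ritt, both $\{T^n : n\geq 0\}$ and $\{n(T^n-T^{n-1}) : n\geq 1\}$ are $R$-bounded. The point is to show that $\{zR(z,A) : z\in\Cdb\setminus\overline{\Sigma_\nu}\}$ is $R$-bounded for each $\nu<\frac{\pi}{2}$ close to $\frac{\pi}{2}$; equivalently, via the substitution $z = 1-\lambda$, that $\{(\lambda-1)R(\lambda,T) : \lambda\notin\overline{B_\beta}\}$ is $R$-bounded, which is (iii). This is the heart of the proof. The classical argument (due to Nagy, see \cite{N}, and used in \cite{Bl1,Bl2,Ly,NZ,LMX1}) expresses $(\lambda-1)R(\lambda,T)$ for $\lambda$ near $1$ as a series or integral in the operators $T^{k-1}(I-T)$; one writes, for $|\lambda|$ slightly larger than $1$,
$$
R(\lambda,T) = \sum_{n\geq 0}\frac{T^n}{\lambda^{n+1}},
$$
and then performs an Abel-type summation to bring in the differences $T^n - T^{n-1}$. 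The resulting expression exhibits $(\lambda-1)R(\lambda,T)$ as an absolutely convergent ``integral/sum of elements of an $R$-bounded family against an $L^1$ kernel of bounded total mass'', uniformly over $\lambda$ in the relevant region; Lemma~\ref{5Convexity} (applied with $E$ the $R$-bounded family $\{T^n\}\cup\{n(T^n-T^{n-1})\}$ and an appropriate change of the discrete sum into an integral, or its obvious discrete analogue) then yields $R$-boundedness. For $|\lambda|$ bounded away from both $1$ and the spectrum one uses compactness: $\lambda\mapsto(\lambda-1)R(\lambda,T)$ is continuous into $B(X)$ on the relevant compact set $\Lambda_\beta$ as in (\ref{2Lambda}), and a continuous image of a compact set is $R$-bounded (this is elementary: it is norm-bounded, and a norm-compact family is $R$-bounded by approximation). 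Patching the two regimes gives (iii), hence by the substitution $z=1-\lambda$ also gives that $A$ is $R$-sectorial of $R$-type $\alpha<\frac{\pi}{2}$, which is (ii).

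It remains to close the loop with (iii) $\Rightarrow$ (i), i.e.\ recover $R$-boundedness of $\{T^n\}$ and $\{n(T^n - T^{n-1})\}$ from $R$-boundedness of $\{(\lambda-1)R(\lambda,T) : \lambda\notin\overline{B_\beta}\}$. Here I would use the Cauchy integral representations
$$
T^n = \frac{1}{2\pi i}\int_{\partial B_\beta}\lambda^n R(\lambda,T)\,d\lambda,
\qquad
n(T^n - T^{n-1}) = \frac{1}{2\pi i}\int_{\partial B_\beta}n\lambda^{n-1}(\lambda-1) R(\lambda,T)\,d\lambda,
$$
valid since $\sigma(T)\subset\overline{B_\alpha}\subset B_\beta$. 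Rewriting the integrands as $\bigl(\lambda^n\tfrac{1}{\lambda-1}\bigr)\cdot(\lambda-1)R(\lambda,T)$ and $\bigl(n\lambda^{n-1}\bigr)\cdot(\lambda-1)R(\lambda,T)$ respectively, the scalar factors are $L^1$ on $\partial B_\beta$ with total mass bounded \emph{uniformly in $n$} — this is the standard estimate on the contour $\partial B_\beta$, where near the corner $\lambda=1$ one has $|\lambda|\leq 1 - c\,|\lambda-1|$ for some $c>0$ depending on $\beta$, making $\int_{\partial B_\beta}|\lambda|^{n}\,|d\lambda|$ and $\int_{\partial B_\beta}n|\lambda|^{n-1}\,|d\lambda|$ bounded in $n$. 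Then Lemma~\ref{5Convexity}, applied to the $R$-bounded family $E = \{(\lambda-1)R(\lambda,T) : \lambda\in\partial B_\beta\setminus\{1\}\}$, shows both $\{T^n : n\geq 1\}$ and $\{n(T^n-T^{n-1}) : n\geq 1\}$ are $R$-bounded (and $T^0 = I$ adds nothing), giving (i). Finally, the equivalence of (ii) and (iii) via $z\mapsto 1-z$ is immediate from the fact that $1-\Sigma_\nu$ and the complement of $B_\beta$ agree near $1$ and both families are handled identically on the remaining compact pieces by continuity; I would state this last step as ``obvious'' as in the proof of Lemma~\ref{2Blunck}, or fold it into the cycle above.

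I expect the main obstacle to be the (i) $\Rightarrow$ (iii) step, specifically making the Abel-summation/series manipulation for $(\lambda-1)R(\lambda,T)$ precise enough that it visibly falls under Lemma~\ref{5Convexity} with a total-mass bound uniform in $\lambda$ over a full punctured neighbourhood of $1$ inside the complement of $B_\beta$. The scalar estimates are classical but the bookkeeping — choosing the right contour/region decomposition so that one regime is handled by the convexity lemma and the other by norm-compactness — is where care is required; everything else is routine permanence of $R$-boundedness. I would also note in passing that, since \cite{Bl1} already contains closely related statements, it may suffice to cite Blunck for part of the cycle and only supply the Cauchy-integral direction in detail.
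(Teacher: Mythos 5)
There are genuine gaps in the two implications you propose to prove in detail, and they are precisely the difficulties that explain why the paper proves only (ii)$\Rightarrow$(iii) directly and quotes \cite{Bl1} for (i)$\Rightarrow$(ii) and (iii)$\Rightarrow$(i). In your (iii)$\Rightarrow$(i) step, the representation $T^n=\frac{1}{2\pi i}\int_{\partial B_\beta}\lambda^n R(\lambda,T)\,d\lambda$ over the \emph{fixed} contour is not absolutely convergent: after factoring out $(\lambda-1)R(\lambda,T)$ the scalar kernel is $\lambda^n/(\lambda-1)$, and $\int_{\partial B_\beta}\vert\lambda\vert^n\vert\lambda-1\vert^{-1}\,\vert d\lambda\vert$ diverges logarithmically at the corner $\lambda=1$ (the mass bound you state is for $\int\vert\lambda\vert^n\vert d\lambda\vert$, which is not the kernel multiplying the $R$-bounded family). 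Your estimate is exactly right for $n(T^n-T^{n-1})$, whose kernel is $n\lambda^{n-1}$ — this is how Lemma \ref{6Nevanlinna} works, via \cite[Lem. 2.1]{V} — but for $\{T^n\}$ one needs, e.g., an $n$-dependent contour avoiding $1$ at scale $1/n$, or simply Blunck's theorem; so even your fallback of ``citing Blunck and supplying only the Cauchy-integral direction'' does not close this. In your (i)$\Rightarrow$(iii) step, Abel summation gives $(\lambda-1)R(\lambda,T)=I+\sum_{n\geq 1}n(T^n-T^{n-1})\,\lambda^{-n}/n$ for $\vert\lambda\vert>1$, but the scalar mass $\sum_n\vert\lambda\vert^{-n}/n=\log\bigl(\vert\lambda\vert/(\vert\lambda\vert-1)\bigr)$ blows up as $\vert\lambda\vert\to 1^+$, so Lemma \ref{5Convexity} does not apply uniformly; worse, $\Cdb\setminus\overline{B_\beta}$ contains points of the closed unit disc (the unit circle minus a neighbourhood of $1$, and points near $1$ inside the disc but outside the Stolz angle), where the Neumann series does not even exist. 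Reaching those points requires an $R$-bounded analogue of the Nagy--Zemanek/Lyubich continuation argument, which is the actual content of \cite{Bl1} and is not routine bookkeeping.

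A further error is the assertion that ``a norm-compact family is $R$-bounded by approximation'': this is false in general Banach spaces. For instance in $c_0$, the operators $T_k(x)=k^{-1/2}x_k\,e_1$ form a norm-null (hence, with $0$, norm-compact) sequence, yet testing against $x_k=e_k$ gives an $R$-bound of order $(\log m)^{1/2}$. This is exactly why the paper's proof of (ii)$\Rightarrow$(iii) invokes Weis's result that an \emph{analytic} function maps compact subsets of its domain to $R$-bounded sets \cite[Prop. 2.6]{W1}, applied to $h(\lambda)=(\lambda-1)R(\lambda,T)$ on the compact set $\Lambda_\beta$ of Lemma \ref{2Blunck} and on the circle $\vert\lambda\vert=2$, together with the maximum principle for $R$-boundedness \cite[Prop. 2.8]{W1} to handle $\vert\lambda\vert\geq 2$; with these two ingredients the scalar proof of Lemma \ref{2Blunck} adapts verbatim. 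So the repair is: justify the bounded region by analyticity rather than mere compactness, treat the unbounded region by the maximum principle, and either cite \cite{Bl1} for (i)$\Rightarrow$(ii) and (iii)$\Rightarrow$(i) (as the paper does) or replace your fixed-contour and Neumann-series arguments by genuinely different ones.
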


\begin{proof}
The implications `(i)$\Rightarrow$(ii)' and  `(iii)$\Rightarrow$(i)'
follow from \cite{Bl1}. The proof of `(ii)$\Rightarrow$(iii)'
is parallel to the one of Lemma \ref{2Blunck}, using
two elementary but important results on $R$-boundedness due to L. Weis. The
first one says that for any open set $\O\subset\Cdb$ and for any compact set 
$F\subset \O$, any analytic function $\O\to B(X)$ 
maps $F$ into an $R$-bounded subset of $B(X)$ \cite[Prop. 2.6]{W1}.
With the notation of the proof of Lemma \ref{2Blunck}, this implies that
the two sets
$$
E_1=h(\Lambda_\beta)
\qquad\hbox{and}\qquad 
E_2=\{h(\lambda)\, :\,\vert\lambda\vert=2\}
$$
are $R$-bounded. The second one is the `maximum principle' for 
$R$-boundedness \cite[Prop. 2.8]{W1}. Together with the $R$-boundedness
of $E_2$, it implies that 
$\{h(\lambda)\, :\,\vert\lambda\vert\geq 2\}$ is $R$-bounded. 
With these elements in hand, the adaptation of the proof 
of Lemma \ref{2Blunck} is straightforward.
\end{proof}

We will say that $T$ is an $R$-Ritt operator of $R$-type $\alpha$ if it satisfies condition
(iii) of Lemma \ref{5Blunck}. It is clear that in this case, $A=I-T$ is
$R$-sectorial of $R$-type $\alpha$.

In the rest of this section, 
we are going to focus on {\it commutative} $L^p$-spaces,
see however Remark \ref{5Alpha}.
Our objective is the following theorem, which is a key step in
our proof of Theorem \ref{1MainLp}.

\begin{theorem}\label{5Automatic}
Let $(\Omega,\mu)$ be a measure space, let $1<p<\infty$ and let 
$T\colon L^p(\Omega)\to L^p(\Omega)$ be a power bounded operator. Assume that
it satisfies uniform estimates
\begin{equation}\label{5DbSFE}
\norm{x}_{T,1}\lesssim\norm{x}_{L^p}\qquad \hbox{and} \qquad
\norm{y}_{T^*,1}\lesssim\norm{y}_{L^{p'}}
\end{equation}
for $x\in L^p(\Omega)$ and $y\in L^{p'}(\Omega)$.
Then $T$ is $R$-Ritt.
\end{theorem}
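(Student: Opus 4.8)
The plan is to show directly that the set $\{(\lambda-1)R(\lambda,T) : \lambda \in \Cdb\setminus\overline{B_\beta}\}$ is $R$-bounded for every $\beta$ in a suitable range, which by Lemma~\ref{5Blunck} gives that $T$ is $R$-Ritt. The starting point is a resolvent representation adapted to Ritt operators: since $T$ is power bounded, for $|\lambda|>1$ one has $R(\lambda,T)=\sum_{n\geq 0}\lambda^{-n-1}T^n$, and after rearranging in terms of the differences $T^{k-1}(I-T)$ (an Abel-summation / telescoping manipulation) one obtains an expression of $(\lambda-1)R(\lambda,T)$ as a series whose $k$-th term involves $T^{k-1}(I-T)$ against a scalar kernel decaying like $k^{-1}$ near $\lambda=1$ and geometrically away from it. The key point is that, on $L^p$, the square function estimate $\norm{x}_{T,1}\lesssim\norm{x}_{L^p}$ together with its dual $\norm{y}_{T^*,1}\lesssim\norm{y}_{L^{p'}}$ is exactly the hypothesis needed to turn such a ``vector-valued integral of $T^{k-1}(I-T)$ against an $\ell^2$-kernel'' into an $R$-bounded family, via the $\ell^2$-valued Khintchine-type equivalence (\ref{2K}).

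First I would record the elementary fact that on $L^p(\Omega)$, for a fixed sequence $(a_k)_k\in\ell^2$, the single operator $x\mapsto \sum_k a_k k^{1/2} T^{k-1}(I-T)(x)$ has norm $\lesssim \|(a_k)\|_{\ell^2}$: this follows by dualizing, writing $\langle \sum_k a_k k^{1/2}T^{k-1}(I-T)x, y\rangle = \sum_k a_k \langle k^{1/2}T^{k-1}(I-T)x, y\rangle$, applying Cauchy--Schwarz in $k$, then Hölder in $\Omega$ between the two square functions, and invoking (\ref{5DbSFE}) for $T$ and (\ref{5DbSFE}) for $T^*$ via (\ref{2SFLp}). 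More is true: the same computation upgrades to $R$-boundedness. Using (\ref{2K}) to pass from $\rm Rad$-norms to $\ell^2(\Omega)$-valued square functions, the $R$-boundedness of a family of operators of the form $S_a=\sum_k a_k k^{1/2}T^{k-1}(I-T)$, indexed by $a$ in a bounded subset of $\ell^2$, reduces to the scalar inequality $\bignorm{(\sum_j|\sum_k a_k^{(j)}k^{1/2}T^{k-1}(I-T)x_j|^2)^{1/2}}_{L^p}\lesssim \sup_j\|a^{(j)}\|_{\ell^2}\bignorm{(\sum_j|x_j|^2)^{1/2}}_{L^p}$; this again follows from the two square-function bounds together with Hölder's inequality in $\Omega$ applied to the bilinear pairing of the $\ell^2(\ell^2)$-valued square functions of $(x_j)_j$ and of a test family $(y_j)_j$, which in turn only requires (\ref{2SFLp}) for $T$ and $T^*$ because those estimates are stable under $\ell^2(\ell^2)$-valued extensions on $L^p$.

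Next I would combine this with Lemma~\ref{5Convexity}. Fix $\beta\in(0,\frac{\pi}{2})$ close to $\frac{\pi}{2}$. For $\lambda\in\Cdb\setminus\overline{B_\beta}$ the representation above expresses $(\lambda-1)R(\lambda,T)$ as $S_{a(\lambda)}$ for a suitable $a(\lambda)\in\ell^2$, and a direct estimate on the scalar kernel shows $\sup_{\lambda\notin\overline{B_\beta}}\|a(\lambda)\|_{\ell^2}<\infty$ (the worst case is $\lambda\to 1$ outside the Stolz domain, where the $k^{-1}$ decay beats the $k^{1/2}$ weight in $\ell^2$; for $|\lambda|$ large one uses the geometric decay). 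One technical point is that this series representation of $R(\lambda,T)$ is only valid a priori for $|\lambda|>1$; to cover all of $\Cdb\setminus\overline{B_\beta}$ one first runs the argument on $\{|\lambda|>1\}\setminus\overline{B_\beta}$, and for the remaining compact region $\Lambda_\beta$ (as in (\ref{2Lambda})) one invokes Weis's result that an analytic $B(X)$-valued function maps compacts to $R$-bounded sets, exactly as in the proof of Lemma~\ref{5Blunck}; $R$-boundedness is stable under finite unions. Passing from the $R$-boundedness of $\{S_a : \|a\|_{\ell^2}\leq M\}$ to that of the resolvent family is then immediate since the latter is a subset of the former.

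The main obstacle I anticipate is the first upgrade---getting genuine $R$-boundedness, not merely uniform boundedness, of the operators $S_a$ out of the scalar square-function hypotheses. The uniform boundedness is a one-line Hölder argument, but $R$-boundedness requires handling an extra Rademacher (equivalently $\ell^2(\Omega)$-valued) coordinate, and one must check that the square-function estimates (\ref{5DbSFE}) really do tensorize, i.e. that $\norm{\cdot}_{T,1}\lesssim\norm{\cdot}$ on $L^p(\Omega)$ implies the analogous bound on $L^p(\Omega;\ell^2)$ with $T\otimes I_{\ell^2}$. This is true because the equivalence (\ref{2K}) and Fubini identify the $\ell^2$-valued square function on $L^p(\Omega;\ell^2)$ with an ordinary square function on $L^p(\Omega\times\Ndb)$, but making this precise---and keeping careful track of which inequality is used for $T$ and which for $T^*$ in the duality step---is where the real work of the proof lies.
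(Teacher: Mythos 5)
Your strategy collapses at its central quantitative claim, namely that $(\lambda-1)R(\lambda,T)$ can be written (up to a multiple of $I$) as $S_{a(\lambda)}=\sum_k a_k(\lambda)\,k^{\frac12}T^{k-1}(I-T)$ with $\norm{a(\lambda)}_{\ell^2}$ bounded uniformly over $\lambda\in\Cdb\setminus\overline{B_\beta}$, $\vert\lambda\vert>1$. The Neumann series leaves no freedom here: using $\sum_{k=1}^{n}T^{k-1}(I-T)=I-T^n$ one gets, for $\vert\lambda\vert>1$,
$$
(\lambda-1)R(\lambda,T)\,=\,I-\sum_{k\geq1}\lambda^{-k}\,T^{k-1}(I-T),
$$
so $a_k(\lambda)=k^{-\frac12}\lambda^{-k}$ and $\norm{a(\lambda)}_{\ell^2}^{2}=\sum_k k^{-1}\vert\lambda\vert^{-2k}\approx\log\frac{1}{\vert\lambda\vert-1}$, which tends to $\infty$ as $\vert\lambda\vert\to1^{+}$. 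Since $\overline{B_\beta}$ meets the unit circle only at $1$, the region $\{\vert\lambda\vert>1\}\setminus\overline{B_\beta}$ is all of $\{\vert\lambda\vert>1\}$, so it contains such $\lambda$ in abundance; the announced ``$k^{-1}$ decay near $\lambda=1$'' simply is not there, and summing by parts once more (passing to second differences $T^{k-1}(I-T)^2$) produces kernels carrying a factor $(\lambda-1)^{-1}$, i.e. it trades the logarithmic blow-up at the circle for a blow-up at $1$. This is not a repairable technicality. Observe that your bound for $S_a$ uses only the estimate for $T$: pointwise Cauchy--Schwarz in $k$ gives $\vert S_a x\vert\leq\norm{a}_{\ell^2}\bigl(\sum_k k\vert T^{k-1}(I-T)x\vert^2\bigr)^{1/2}$ on $\Omega$, hence $\norm{S_ax}_{L^p}\lesssim\norm{a}_{\ell^2}\norm{x}_{L^p}$, and the same is true of its $R$-bounded upgrade. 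So if the kernel were uniformly square-summable, your argument would deduce the $R$-Ritt property from power boundedness and the single estimate $\norm{x}_{T,1}\lesssim\norm{x}$; the hypothesis on $T^*$ would never genuinely enter, whereas it is an essential part of the statement, and every working argument pairs the square function of $T$ against that of $T^*$ rather than pairing one of them against a scalar $\ell^2$ kernel.

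Two further gaps. First, the treatment of the remaining region presupposes what is to be proved: invoking Weis's analyticity result on the compact set $\Lambda_\beta$ of (\ref{2Lambda}) requires $\Lambda_\beta$ to lie in the resolvent set, i.e. that $T$ is already a Ritt operator of type $<\beta$, while the hypothesis is only power boundedness. Second, the tensorization you need (passing from $\norm{\cdot}_{T,1}\lesssim\norm{\cdot}$ to its $\ell^2$-valued analogue for $T\otimes I_{\ell^2}$) is indeed true on $L^p(\Omega)$, but not because $L^p(\Omega;\ell^2)$ can be identified with $L^p(\Omega\times\Ndb)$ --- that fails for $p\neq2$; it is obtained by applying the scalar estimate to $\sum_j\eta_j x_j$, averaging over the signs $\eta_j=\pm1$ and using the two-index equivalence (\ref{2KK}), that is, property $(\alpha)$ of $L^p$. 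That averaging device is in fact the engine of the paper's actual proof, which avoids resolvents altogether: it first upgrades the hypothesis to a second-order square function estimate (Lemma~\ref{512}), then uses the mean ergodic decomposition (\ref{5MET}) and the equivalence $\norm{x}\approx\norm{x}_{T,1}$ on $\overline{{\rm Ran}(I-T)}$ --- this is where the estimate for $T^*$ enters --- and finally bounds the Rademacher averages of $(T^nx_n)_n$ and of $(nT^{n-1}(I-T)x_n)_n$ by index shifts inside doubly indexed square functions, yielding the $R$-boundedness of the two sets in (\ref{2Sets}) directly.
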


Until the end of the proof of this theorem, we fix a bounded operator
$T\colon L^p(\Omega)\to L^p(\Omega)$, with $1<p<\infty$.
The following lemma is inspired by the proof of \cite[Thm. 4.7]{KP}.

\begin{lemma}\label{512}
If $T$ satisfies a uniform estimate
\begin{equation}\label{51}
\norm{x}_{T,1}\lesssim\norm{x},\qquad x\in L^p(\Omega),
\end{equation}
then it automatically satisfies a uniform estimate
\begin{equation}\label{52}
\norm{x}_{T,2}\lesssim\norm{x},\qquad x\in L^p(\Omega).
\end{equation}
\end{lemma}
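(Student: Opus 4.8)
The plan is to pass through the square function $\norm{\cdotp}_{T,2}$ by writing the relevant vector as the image of the vector defining $\norm{\cdotp}_{T,1}$ under an $R$-bounded family of operators, and then invoke the defining inequality of $R$-boundedness together with the hypothesis. Concretely, set $x_k = k^{\frac{1}{2}}T^{k-1}(I-T)(x)$, so that $\norm{x}_{T,1}^2 \approx \norm{(\sum_k |x_k|^2)^{\frac12}}_{L^p}^2$. One checks that $k^{\frac{3}{2}}T^{k-1}(I-T)^2(x)$ can be expressed as $\sum_j a_{k,j}\, T^{j-1}(I-T)(x)$ for suitable scalar coefficients; the natural choice uses the telescoping/summation-by-parts identity relating the second discrete derivative of $(T^{k-1})$ to a weighted average of first discrete derivatives. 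So I would first record the elementary combinatorial identity expressing $k^{\frac32}T^{k-1}(I-T)^2 = \sum_{j\ge 1} c_{k,j}\, j^{-\frac12}\,\bigl(j^{\frac12}T^{j-1}(I-T)\bigr)$ with $\sum_j |c_{k,j}| j^{-\frac12}$ uniformly bounded in $k$ (this is where the Ritt estimate $n\norm{T^n-T^{n-1}}\le C$ and power boundedness enter to control the tails).

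Once that identity is in hand, the second step is to recognize the map $\sum_k \varepsilon_k\otimes x_k \mapsto \sum_k \varepsilon_k\otimes \bigl(k^{\frac32}T^{k-1}(I-T)^2 x\bigr)$ as built from operators of the form appearing in Lemma \ref{5Convexity}: each summand $c_{k,j} j^{-\frac12}\cdot(\hbox{shift in }k)\circ(\hbox{the operator }T^{j-1}(I-T)\hbox{-type block})$. The family $\{T^{j-1}(I-T) : j\ge 1\}$ is automatically $R$-bounded here because the hypothesis (\ref{51}) forces it — indeed testing $\norm{\cdotp}_{T,1}$ on $x$ and on $T^{m}x$ and comparing, or more cleanly, using that $(k^{\frac12}T^{k-1}(I-T))_k$ sends $x$ into ${\rm Rad}(L^p)$ boundedly and a standard restriction argument, gives $R$-boundedness of $\{\sqrt{k}\,T^{k-1}(I-T):k\ge1\}$, hence of $\{T^{k-1}(I-T):k\ge1\}$. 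Then Lemma \ref{5Convexity} (in its discrete/$\ell^1$ form, summing over $j$ with weights $\sum_j|c_{k,j}|j^{-\frac12}\le K$) yields that the whole family of operators $\{k^{\frac32}T^{k-1}(I-T)^2 : k\ge1\}$, and more precisely the diagonal-type operator realizing the passage from the $m=1$ square function vector to the $m=2$ one, is $R$-bounded with constant controlled by the $\norm{\cdotp}_{T,1}$-bound. Applying the $R$-boundedness inequality to the finite families and using (\ref{2K}) on both sides gives $\norm{x}_{T,2}\lesssim \norm{x}_{T,1}\lesssim\norm{x}$.

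The main obstacle I expect is the bookkeeping in the first step: producing the explicit coefficients $c_{k,j}$ and proving the uniform bound $\sup_k\sum_j |c_{k,j}| j^{-\frac12}<\infty$. The cleanest route is probably not a bare summation-by-parts but to use the integral/series representation of $(I-T)^2 T^{k-1}$ against the spectral-type kernel coming from the Cauchy integral (\ref{2Cauchy}) for the function $\varphi_\delta$ with $\delta=2$ versus $\delta=1$, i.e. write $k^{\frac32}\varphi$ for the $m=2$ symbol as a convergent $\ell^1$-combination (in a continuous or discrete parameter) of the $m=1$ symbols $j^{\frac12}\lambda^{j-1}(1-\lambda)$, uniformly over $\lambda\in B_\beta$; the decay $|1-\lambda|$ on $\partial B_\beta$ and $|\lambda|\le 1$ make the kernel absolutely summable. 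One must also be a little careful that $T$ is only assumed power bounded here (not yet known to be a Ritt operator), so every estimate must be derived from power boundedness of $T$ together with the consequence of (\ref{51}) that $\sqrt{k}\,\norm{T^{k-1}(I-T)x}$ is square-summable with constant $\lesssim\norm{x}$ — in particular one should avoid invoking the Ritt resolvent bounds, deducing instead the needed uniform control of $\{T^{k-1}(I-T)\}_k$ directly from the square function hypothesis as indicated above. Modulo this, the argument is the discrete analogue of the standard ``lifting the order of a square function'' trick for sectorial operators, and the rest is routine.
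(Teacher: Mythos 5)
Your plan has two gaps, and the first one is fatal as stated. You want to write $k^{\frac32}T^{k-1}(I-T)^2=\sum_j c_{k,j}\,T^{j-1}(I-T)$ with $\sup_k\sum_j\vert c_{k,j}\vert j^{-\frac12}<\infty$. But such a decomposition does not exist: dividing the corresponding scalar identity by $(1-z)$ gives $k^{\frac32}z^{k-1}(1-z)=\sum_j c_{k,j}z^{j-1}$, a power series identity, so the coefficients are forced to be $c_{k,k}=k^{\frac32}$, $c_{k,k+1}=-k^{\frac32}$ and all others zero; then $\sum_j\vert c_{k,j}\vert j^{-\frac12}\approx 2k$, which is unbounded. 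No Cauchy-integral reformulation can help, since it cannot change this unique representation — and in any case a contour integral over $\partial B_\beta$ would use exactly the Ritt resolvent bounds you correctly say must be avoided (at this stage $T$ is only power bounded). The second gap is the claim that (\ref{51}) alone "forces" $R$-boundedness of $\{\sqrt{k}\,T^{k-1}(I-T)\}$: the single-vector square function estimate only yields the uniform norm bounds $\norm{T^{k-1}(I-T)}\lesssim k^{-\frac12}$ (by looking at one summand of the Rademacher average), and on $L^p$ with $p\neq 2$ uniform boundedness is far from $R$-boundedness; there is no "standard restriction argument" producing it. Indeed, the $R$-boundedness of $\{kT^{k-1}(I-T)\}$ is precisely the conclusion of Theorem \ref{5Automatic}, and its proof needs both the $T$ and the $T^*$ estimates, which you do not have inside this lemma.

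The actual proof avoids both issues by never trying to lower the order back to first differences through an $\ell^1$-decomposition or an $R$-bounded family. Instead one uses the identity $\sum_{j=1}^{k}j(k+1-j)=\frac16 k(k+1)(k+2)$ to dominate $k^3\bigl\vert T^{k-1}(I-T)^2x\bigr\vert^2$ pointwise by $6\sum_{j+r=k+1}jr\bigl\vert T^{j+r-2}(I-T)^2x\bigr\vert^2$, then factorizes $T^{j+r-2}(I-T)^2=T^{j-1}(I-T)\bigl[T^{r-1}(I-T)x\bigr]$ and passes to a double Rademacher average via (\ref{2KK}). The hypothesis (\ref{51}) is then applied twice: first to the vectors $\sum_r r^{\frac12}\varepsilon_r(u)T^{r-1}(I-T)x$ for each fixed value $u$ of the inner Rademacher variable, and then to $x$ itself. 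This "apply the scalar hypothesis to a randomized vector" step is exactly what replaces the $R$-boundedness you were trying to manufacture; it only needs the lattice identification (\ref{2KK}) of ${\rm Rad}({\rm Rad}(L^p))$, which is available here. If you want to salvage your write-up, this is the mechanism to adopt; the order-lowering trick you describe for sectorial operators has no analogue with summable scalar kernels in this discrete setting.
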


\begin{proof}
We will use the following elementary identity that the reader can easily check. 
For any integer $k\geq 1$,
\begin{equation}\label{KP1}
\sum_{j=1}^{k} j(k+1-j)\,=\,\frac{1}{6}\,k(k+1)(k+2).
\end{equation}
Let $x\in L^p(\Omega)$ and let $N\geq 1$ be an integer.
According to the above identity we have a function inequality
$$
\sum_{k=1}^{N} k^3\,\bigl\vert T^{k-1}(I-T)^2x\bigr\vert^2
\,\leq\,
6\sum_{k=1}^{N}\sum_{j=1}^{k} j(k+1-j)\bigl\vert T^{k-1}(I-T)^2x\bigr\vert^2.
$$
By a change of indices (letting $r=k+1-j$ for any fixed $j$), we have
\begin{align*}
\sum_{k=1}^{N}\sum_{j=1}^{k} j(k+1-j)\bigl\vert T^{k-1}(I-T)^2x\bigr\vert^2\,
&=\,\sum_{j=1}^{N} j\sum_{k=j}^{N}(k+1-j)\bigl\vert T^{k-1}(I-T)^2x\bigr\vert^2\\
&=\,\sum_{j=1}^{N} j\sum_{r=1}^{N+1-j} r\bigl\vert T^{r+j-2}(I-T)^2x\bigr\vert^2\\
&\leq\,\sum_{j=1}^{N} j\sum_{r=1}^{N} r\bigl\vert T^{r+j-2}(I-T)^2x\bigr\vert^2.
\end{align*}
According to (\ref{2KK}), we have an estimate
$$
\Bignorm{\Bigl(\sum_{j,r=1}^{N}  jr \bigl\vert T^{r+j-2}(I-T)^2x\bigr\vert^2\Bigr)^{\frac{1}{2}}}_{L^p(\Omega)}\,
\lesssim\,
\Bignorm{\Bigl(\sum_{j,r=1}^{N}j^{\frac{1}{2}}r^{\frac{1}{2}}\, \varepsilon_j\otimes \varepsilon_r\otimes 
 T^{r+j-2}(I-T)^2x}_{{\rm Rad}({\rm Rad}(L^p(\Omega)))}.
$$
Furthermore, writing 
$$
T^{r+j-2}(I-T)^2x\, =\,T^{j-1}(I-T)\bigl[T^{r-1}(I-T) x\bigr],
$$
and applying the assumption (\ref{51}) twice, we see that
\begin{align*}
\Bignorm{\sum_{j,r=1}^{N} j^{\frac{1}{2}}r^{\frac{1}{2}}\varepsilon_j\otimes \varepsilon_r\otimes 
T^{r+j-2}(I-T)^2x}_{{\rm Rad}({\rm Rad}(L^p(\Omega)))}\,
&
\lesssim
\,\Bignorm{\sum_{r=1}^{N} r^{\frac{1}{2}}\varepsilon_r \otimes 
T^{r-1}(I-T)x}_{{\rm Rad}(L^p(\Omega))}\\
&\lesssim
\,\norm{x}.
\end{align*}
Altogether, we obtain the estimate
$$
\Bignorm{\Bigl(\sum_{k=1}^{N} k^3\,\bigl\vert T^{k-1}(I-T)^2x\bigr\vert^2
\Bigr)^{\frac{1}{2}}}_{L^p(\Omega)}\,\lesssim\,\norm{x},
$$
which proves (\ref{52}).
\end{proof}

\begin{proof}[Proof of Theorem \ref{5Automatic}]
Since $T$ is power bounded and $X=L^p(\Omega)$ is reflexive, 
the Mean Ergodic Theorem ensures that 
\begin{equation}\label{5MET}
X=\, {\rm Ker}(I-T)\oplus \overline{{\rm Ran}(I-T)}.
\end{equation}
Furthermore the two square function estimates (\ref{5DbSFE}) imply that
$$
\norm{x}\approx\norm{x}_{T,1},\qquad x\in \overline{{\rm Ran}(I-T)}.
$$
Indeed this is implicit in \cite[Cor. 3.4]{LMX1}, to which we refer for details.
Let $(x_n)_{n\geq 1}$ be a finite family of $\overline{{\rm Ran}(I-T)}$, and let
$(\eta_n)_{n\geq 1}$ be a sequence of $\pm 1$. The above equivalence yields
$$
\Bignorm{\sum_{n\geq 1} \eta_n \, x_n}_{L^p(\Omega)}\,\approx\,\Bignorm{\sum_{k\geq 1}\sum_{n\geq 1} k^{\frac{1}{2}}\eta_n\,
\varepsilon_k\otimes T^{k-1}(I-T)x_n}_{{\rm Rad}(L^p(\Omega))}.
$$
Averaging over the $\eta_n=\pm 1$ and applying (\ref{2KK}), we obtain that
\begin{equation}\label{5Equiv}
\Bignorm{\sum_{n\geq 1} \varepsilon_n \otimes x_n}_{{\rm Rad}(L^p(\Omega))}\,\approx\,
\Bignorm{\Bigl(\sum_{k,n\geq 1} k\bigl\vert 
T^{k-1}(I-T)x_n\bigr\vert^2\Bigr)^{\frac{1}{2}}}_{L^p(\Omega)}
\end{equation}
for $x_n$ in $\overline{{\rm Ran}(I-T)}$. 

Applying Lemma \ref{512} and similarly 
averaging the resulting estimates
$$
\Bignorm{\sum_n\eta_n\, x_n}_{T,2}\,\lesssim\,\Bignorm{\sum_n\eta_n\, x_n}
$$
over all $\eta_n=\pm 1$, we obtain that
\begin{equation}\label{52Ave}
\Bignorm{\Bigl(\sum_{k,n\geq 1} k^3\bigl\vert T^{k-1}(I-T)^2
x_n\bigr\vert^2 \Bigr)^{\frac{1}{2}}}_{L^p(\Omega)}\,\lesssim\,
\Bignorm{\sum_{n\geq 1} \varepsilon_n \otimes x_n}_{{\rm Rad}(L^p(\Omega))}
\end{equation}
for $x_n$ in $L^p(\Omega)$.

Our aim is to show that the two sets in (\ref{2Sets}) are $R$-bounded. Their restrictions to the kernel
${\rm Ker}(I-T)$ clearly have this property. By (\ref{5MET}) it therefore suffices to
consider their restrictions to  $\overline{{\rm Ran}(I-T)}$.

Let $(x_n)_{n\geq 1}$ be a finite family of $\overline{{\rm Ran}(I-T)}$. 
Each $T^nx_n$ belongs to that
space hence by (\ref{5Equiv}), we have
$$
\Bignorm{\sum_{n\geq 1} \varepsilon_n \otimes T^nx_n}_{{\rm Rad}(L^p(\Omega))}\,
\lesssim\,
\Bignorm{\Bigl(\sum_{k,n\geq 1} k\bigl\vert T^{k+n-1}(I-T)x_n\bigr\vert^2\Bigr)^{\frac{1}{2}}}_{L^p(\Omega)}.
$$
Moreover
\begin{align*}
\Bignorm{\Bigl(\sum_{k,n\geq 1} k\bigl\vert T^{k+n-1}(I-T)x_n\bigr\vert^2\Bigr)^{\frac{1}{2}}}_{L^p(\Omega)}\,
&\leq\, 
\Bignorm{\Bigl(\sum_{k,n\geq 1} (k+n)\bigl\vert T^{k+n-1}(I-T)x_n\bigr\vert^2\Bigr)^{\frac{1}{2}}}_{L^p(\Omega)}\\
&\leq\, 
\Bignorm{\Bigl(\sum_{n\geq 1}\sum_{k\geq n+1} k\bigl\vert T^{k-1}(I-T)x_n\bigr\vert^2\Bigr)^{\frac{1}{2}}}_{L^p(\Omega)}\\
&\leq\, 
\Bignorm{\Bigl(\sum_{k,n\geq 1} k \bigl\vert T^{k-1}(I-T)x_n\bigr\vert^2\Bigr)^{\frac{1}{2}}}_{L^p(\Omega)}.
\end{align*}
Using  (\ref{5Equiv}) we deduce that
$$
\Bignorm{\sum_{n\geq 1} \varepsilon_n \otimes T^nx_n}_{{\rm Rad}(L^p(\Omega))}\,
\lesssim\,
\Bignorm{\sum_{n\geq 1} \varepsilon_n \otimes x_n}_{{\rm Rad}(L^p(\Omega))}.
$$
This shows the $R$-boundedness of $\{T^n\,:\, n\geq 1\}$.

Likewise using (\ref{5Equiv}) and (\ref{52Ave}), we have
\begin{align*}
\Bignorm{\sum_{n\geq 1} \varepsilon_n \otimes n T^{n-1}(I-T) x_n}_{{\rm Rad}(L^p(\Omega))}\,
& \lesssim\,
\Bignorm{\Bigl(\sum_{k,n\geq 1} k\bigl\vert n\, T^{k+n-2}(I-T)^2 x_n\bigr\vert^2\Bigr)^{\frac{1}{2}}}_{L^p(\Omega)}\\
& \lesssim\,
\Bignorm{\Bigl(\sum_{k,n\geq 1} (k+n)^3\bigl\vert T^{k+n-2}(I-T)^2 x_n\bigr\vert^2\Bigr)^{\frac{1}{2}}}_{L^p(\Omega)}\\
& \lesssim\,
\Bignorm{\Bigl(\sum_{n\geq 1}\sum_{k\geq n} (k+1)^3\bigl\vert T^{k-1}(I-T)^2 x_n\bigr\vert^2\Bigr)^{\frac{1}{2}}}_{L^p(\Omega)}\\
& \lesssim\,
\Bignorm{\Bigl(\sum_{k,n\geq 1} k^3\bigl\vert T^{k-1}(I-T)^2 x_n\bigr\vert^2\Bigr)^{\frac{1}{2}}}_{L^p(\Omega)}\\
& \lesssim\,
\Bignorm{\sum_{n\geq 1} \varepsilon_n \otimes x_n}_{{\rm Rad}(L^p(\Omega))}.
\end{align*}
Thus the set $\{n T^{n-1}(I-T)\, :\, n\geq 1\}$ is $R$-bounded as well, which completes the proof.
\end{proof}

\begin{remark}\label{5Alpha}
It is easy to check that the above proof and hence 
Theorem \ref{5Automatic} extend to the case when $L^p(\Omega)$ is replaced by a
reflexive Banach space with property $(\alpha)$. In particular this holds true on
any reflexive Banach lattice with finite cotype.
However we do not know whether Theorem \ref{5Automatic} holds true on {\it noncommutative} $L^p$-spaces.
\end{remark}

The above proof can be adapted to the sectorial case, which yields a 
slight improvement of the main result of \cite{CDMY}. We will
explain this point in a separate note \cite{LM4}.

\medskip
\section{From $H^\infty$ functional calculus to square functions}
The main aim of this section is to determine when a
Ritt operator $T$ with a bounded $H^{\infty}_{0}(B_\gamma)$ 
functional calculus necessarily satisfies square function estimates
$\norm{x}_{T,m}\lesssim\norm{x}$. We will show that this holds 
true on Banach spaces with a finite cotype. We refer the reader e.g. to 
\cite{DJT} for information on cotype.

For that purpose, we investigate a strong form 
of bounded holomorphic functional calculus which is somehow natural in order to make 
connections with square functions. We consider both the sectorial case 
and the Ritt case.

Let $f_1,\ldots,f_n$ be a finite family of $H^\infty(\O)$, for some non empty open set $\O\subset \Cdb$. 
In the sequel we let
$$
\Bignorm{\Bigl(\sum_{l=1}^{n}\vert f_l\vert^2\Bigr)^{\frac{1}{2}}}_{\infty,\footnotesize{\O}}\,
=\,\sup\biggl\{\Bigl(\sum_{l=1}^{n}\vert f_l(z)\vert^2\Bigr)^{\frac{1}{2}}\, :\, z\in \O\biggr\}.
$$
Equivalently, let $(e_1,\ldots,e_n)$ be the canonical basis of the Hermitian space $\ell^2_n$, then
\begin{equation}\label{3Square}
\Bignorm{\Bigl(\sum_{l=1}^{n}\vert f_l\vert^2\Bigr)^{\frac{1}{2}}}_{\infty, \footnotesize{\O}}\,
=\,\Bignorm{\sum_{l=1}^{n} f_l\otimes e_l}_{H^{\infty}(\footnotesize{\O};\ell^2_n)}.
\end{equation}

In the following definitions, $X$ is an arbitrary Banach space.

\begin{definition}\label{3Quad}
\
\begin{enumerate}
\item [(1)] Let $A$ be a sectorial operator of type $\omega\in (0,\pi)$ on $X$, 
and let $\theta\in (\omega,\pi)$. We say that $A$ admits a quadratic $H^{\infty}(\Sigma_\theta)$ 
functional calculus if there exists a constant $K>0$ such that for any $n\geq 1$, for any
$f_1,\ldots,f_n$ in $H^\infty_0(\Sigma_\theta)$, and for any $x\in X$,
\begin{equation}\label{3Q0}
\Bignorm{\sum_{l=1}^{n}\varepsilon_l\otimes f_l(A)x}_{{\rm Rad}(X)}\,\leq\, K\norm{x}\,
\Bignorm{\Bigl(\sum_{l=1}^{n}\vert f_l\vert^2\Bigr)^{\frac{1}{2}}}_{\infty,\Sigma_\theta}.
\end{equation}
\item [(2)] Let $T$ be a Ritt operator of type $\alpha\in \bigl(0,\frac{\pi}{2}\bigr)$ on $X$, 
and let $\gamma\in \bigl(\alpha,\frac{\pi}{2}\bigr)$. We say that $T$ admits a quadratic 
$H^{\infty}(B_\gamma)$ 
functional calculus if there exists a constant $K>0$ such that for any $n\geq 1$, for any
$\varphi_1,\ldots,\varphi_n$ in $H^{\infty}_{0}(B_\gamma)$, and for any $x\in X$,
$$
\Bignorm{\sum_{l=1}^{n}\varepsilon_l\otimes \varphi_l(T)x}_{{\rm Rad}(X)}\,\leq\, K\norm{x}\,
\Bignorm{\Bigl(\sum_{l=1}^{n}\vert \varphi_l\vert^2\Bigr)^{\frac{1}{2}}}_{\infty,B_\gamma}.
$$
\end{enumerate}
\end{definition}

Arguing as in Proposition \ref{2Approx}, one can restrict to polynomials in Part (2).

It is clear that any sectorial operator with  a quadratic $H^{\infty}(\Sigma_\theta)$ 
functional calculus has a bounded $H^{\infty}(\Sigma_\theta)$ 
functional calculus. We will see in Proposition \ref{3Cex} that the converse does not hold true. 
We are going to show however 
that up to a change of angle,
that converse holds true on a large class
of Banach spaces. We will need the following remarkable estimate
of Kaiser-Weis \cite[Cor. 3.4]{KaiWei}.

\begin{lemma}\label{3Kaiser} \cite{KaiWei}
Let $X$ be a Banach space with finite cotype. Then there exists a constant $C>0$ such that
\begin{equation}\label{3DefQ}
\Bignorm{\sum_{k,l\geq 1} \alpha_{kl}\,\varepsilon_k\otimes\varepsilon_l\otimes x_k}_{{\rm Rad}({\rm Rad}(X))}\,
\leq\, C\sup_{k}\Bigl(\sum_{l}\vert \alpha_{kl}\vert^2\Bigr)^{\frac{1}{2}}\,\Bignorm{\sum_{k}\varepsilon_k\otimes
x_k}_{{\rm Rad}(X)}
\end{equation}
for any finite family $(\alpha_{kl})_{k,l\geq 1}$ of complex numbers and
any finite family $(x_k)_{\geq 1}$ of $X$.
\end{lemma}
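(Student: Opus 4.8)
The plan is to pass from Rademacher to Gaussian averages --- which is where the finite cotype assumption enters --- then to \emph{decouple} the two layers of randomness, after which the estimate collapses onto the contraction principle. Concretely, let $(g_k)_{k\geq1}$ and $(g'_l)_{l\geq1}$ be independent sequences of independent standard Gaussian variables on a common probability space with expectation $\Edb$. Since $X$ has finite cotype, the Maurey--Pisier theorem provides a constant, depending only on $X$, comparing Rademacher and Gaussian averages of finite families in $X$; applying it first to the inner and then to the outer sums (equivalently, using that $L^2(\M;X)$ again has finite cotype) one obtains, uniformly in $n$,
$$\Bignorm{\sum_{k,l}\alpha_{kl}\,\varepsilon_k\otimes\varepsilon_l\otimes x_k}_{{\rm Rad}({\rm Rad}(X))}\,\approx\,\Bigl(\Edb\Bignorm{\sum_{k,l}\alpha_{kl}\,g_kg'_l\,x_k}_X^2\Bigr)^{1/2},\qquad\Bignorm{\sum_k\varepsilon_k\otimes x_k}_{{\rm Rad}(X)}\,\approx\,\Bigl(\Edb\Bignorm{\sum_k g_k x_k}_X^2\Bigr)^{1/2}.$$
It therefore suffices to prove the inequality for the Gaussian quantities, and one may normalise so that $\sigma:=\sup_k\bigl(\sum_l|\alpha_{kl}|^2\bigr)^{1/2}\leq1$.

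The heart of the matter is a decoupling. Introduce a \emph{doubly indexed} family $(g_{kl})_{k,l\geq1}$ of independent standard Gaussians. The Fubini-type identity for iterated Gaussian averages --- the same fact that underlies the isometric identification $\gamma(H_1,\gamma(H_2,X))=\gamma(H_1\otimes H_2,X)$ in the Gaussian theory of $\gamma$-radonifying operators --- gives, for \emph{any} finite family $(y_{kl})$ in $X$,
$$\Edb\Bignorm{\sum_{k,l}g_kg'_l\,y_{kl}}_X^2\,=\,\Edb\Bignorm{\sum_{k,l}g_{kl}\,y_{kl}}_X^2.$$
Applying this with $y_{kl}=\alpha_{kl}x_k$, the Gaussian quantity to be bounded becomes $\bigl(\Edb\,\Vert\sum_k c_k x_k\Vert_X^2\bigr)^{1/2}$, where $c_k:=\sum_l\alpha_{kl}g_{kl}$.

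After this reindexing the scalars $(c_k)_k$ are \emph{independent}, because $c_k$ involves only the variables $(g_{kl})_{l\geq1}$; moreover each $c_k$ is centred Gaussian with $\Edb|c_k|^2=\sum_l|\alpha_{kl}|^2=:\sigma_k^2\leq\sigma^2$. Hence $(c_k)_k$ and $(\sigma_k g_k)_k$ have the same law, and the contraction principle for Gaussian sums (applicable since $\sigma_k\leq\sigma$) yields
$$\Bigl(\Edb\Bignorm{\sum_k c_k x_k}_X^2\Bigr)^{1/2}=\Bigl(\Edb\Bignorm{\sum_k\sigma_k g_k x_k}_X^2\Bigr)^{1/2}\leq\sigma\,\Bigl(\Edb\Bignorm{\sum_k g_k x_k}_X^2\Bigr)^{1/2}.$$
Together with the first paragraph this proves the lemma, with $C$ depending only on the cotype constants of $X$.

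The only genuine obstacle is the decoupling step. The tempting direct argument --- condition on the inner variables, set $d_k=\sum_l\alpha_{kl}\varepsilon_l$, and bound the inner conditional average of $\Vert\sum_k\varepsilon_k d_k x_k\Vert^2$ by $(\max_k|d_k|)^2\,\Edb\Vert\sum_k\varepsilon_k x_k\Vert^2$ via the contraction principle --- breaks down, since $\Edb\bigl[\max_{1\leq k\leq n}|d_k|^2\bigr]$ is of order $\sigma^2\log n$ for generic matrices $(\alpha_{kl})$, so no bound uniform in $n$ survives. Decoupling the two randomnesses is exactly what erases this spurious maximum: after reindexing, the surviving coefficients $c_k$ are genuinely independent across $k$ and their supremum is irrelevant. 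Finite cotype is used precisely to licence the passage to Gaussian variables, where the Fubini identity holds with constant one; over a general Banach space one could instead invoke a decoupling inequality for Rademacher chaos of order two, at the cost of a purely numerical constant.
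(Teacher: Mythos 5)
Your reduction to Gaussian averages (legitimate under finite cotype, applied twice since $L^2(\M;X)$ inherits finite cotype) and your endgame (independence of the $c_k$'s, identification in law with $(\sigma_k g_k)_k$, Gaussian contraction) are both fine. The gap is exactly where you locate the "only genuine obstacle": the decoupling step. The asserted Fubini-type identity $\Edb\bignorm{\sum_{k,l}g_kg'_l\,y_{kl}}_X^2=\Edb\bignorm{\sum_{k,l}g_{kl}\,y_{kl}}_X^2$ is false for general Banach spaces and general families $(y_{kl})$. It is not even an equality in spaces where the two quantities are comparable (in $\ell^1$ with $y_{kl}=e_{kl}$ the two sides grow like $\tfrac{4}{\pi^2}n^4$ and $\tfrac{2}{\pi}n^4$ respectively), and under finite cotype alone there is no one-sided comparison either: in $X=S^p$ with $2<p<\infty$ (cotype $p$) and $y_{kl}=E_{kl}$ the matrix units, the chaos side is $\norm{g}_2\norm{g'}_2\approx n$ while the flattened side is the $S^p$-norm of a Gaussian matrix, $\approx n^{\frac12+\frac1p}\ll n$; in $S^1$ the inequality fails in the other direction. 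The isomorphism $\gamma(H_1,\gamma(H_2,X))=\gamma(H_1\otimes H_2,X)$ that you invoke is not a general fact of the Gaussian theory: it is equivalent to property $(\alpha)$ (in its Gaussian form), which is strictly stronger than finite cotype and fails precisely for the noncommutative $L^p$-spaces to which this paper applies the lemma (Theorem \ref{3QQ}, Corollary \ref{7NcLp}). So if your identity were available, the lemma would only be obtained under property $(\alpha)$, where the paper already has the stronger estimate (\ref{3Strong-Alpha}) and Kaiser--Weis would be unnecessary.

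Nor does the special structure $y_{kl}=\alpha_{kl}x_k$ rescue the step as a soft identity: with $X=\ell^\infty_n$, $\alpha_{kl}=\delta_{kl}$, $x_k=e_k$, the chaos side is of order $(\log n)^2$ while the flattened side (and $\sigma^2\,\Edb\norm{\sum_k g_kx_k}^2$) is of order $\log n$, so the inequality "chaos $\lesssim$ flattened" for this structured family genuinely requires finite cotype --- after your Gaussian reduction it \emph{is} the lemma, so citing it begs the question. Your closing remark that one could instead use a decoupling inequality for Rademacher chaos also conflates two different comparisons: decoupling relates $\sum_{k,l}\varepsilon_k\varepsilon_l(\cdot)$ to $\sum_{k,l}\varepsilon_k\varepsilon'_l(\cdot)$, not to the doubly indexed independent system $(\varepsilon_{kl})$; the latter comparison is property $(\alpha)$ again. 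Note finally that the paper does not prove this lemma at all; it quotes \cite[Cor. 3.4]{KaiWei}, where finite cotype enters through a genuinely nontrivial argument, not through a formal Fubini identity. As it stands, your proof has a genuine gap at its central step.
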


\begin{theorem}\label{3Q}
Assume that $X$ has finite cotype and let $A$ be a sectorial operator
on $X$ with a bounded $H^{\infty}(\Sigma_\theta)$ functional calculus. Then 
$A$ admits a quadratic $H^{\infty}(\Sigma_\nu)$ functional calculus for any 
$\nu\in(\theta,\pi)$.
\end{theorem}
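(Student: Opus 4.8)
The plan is to run a Calder\'on-type reproducing formula for $A$ against the family $f_1,\dots,f_n$ and to absorb the resulting multiplicity using the Kaiser--Weis inequality of Lemma \ref{3Kaiser} --- this is where finite cotype enters. The other ingredient, essentially free, is that a bounded scalar $H^\infty$-calculus already yields the Rademacher square function estimates used as input.

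First I would fix $\nu\in(\theta,\pi)$ and an intermediate angle $\mu$ with $\theta<\mu<\nu$; since $A$ has a bounded $H^\infty(\Sigma_\theta)$ calculus it has a bounded $H^\infty(\Sigma_\mu)$ calculus with the same constant. Choose a nonzero $\psi\in H^\infty_0(\Sigma_\mu)$, normalized so that $\int_0^\infty\psi(t)^2\,\frac{dt}{t}=1$; rotating contours gives $\int_0^\infty\psi(tz)^2\,\frac{dt}{t}=1$ on $\Sigma_\mu$, hence the reproducing identity $g(A)=\int_0^\infty\psi(tA)\,(\psi_t g)(A)\,\frac{dt}{t}$ for $g\in H^\infty_0(\Sigma_\mu)$, where $\psi_t(z)=\psi(tz)$ and the integral is absolutely convergent in $B(X)$. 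I will use the elementary pointwise bounds $\sum_{j\in\Zdb}|\psi(2^j s z)|\lesssim 1$ and $\sum_{j\in\Zdb}|\psi(2^j s z)|^2\lesssim 1$, uniform over $z\in\Sigma_\mu$ and $s\in[1,2]$. From the first, for any choice of signs $(\eta_j)_j$ the function $z\mapsto\sum_j\eta_j\psi(2^j s z)$ has $H^\infty(\Sigma_\mu)$-norm $\lesssim1$, so the bounded calculus gives $\norm{\sum_j\eta_j\psi(2^j sA)x}\lesssim\norm{x}$ for all signs, and averaging over them yields
\[
\Bignorm{\sum_{j\in\Zdb}\varepsilon_j\otimes\psi(2^j sA)x}_{{\rm Rad}(X)}\,\lesssim\,\norm{x}
\]
uniformly in $s\in[1,2]$; the same holds for $A^*$ on $X^*$, which inherits the bounded calculus.

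For the main estimate we may assume, by homogeneity, that $\sum_{l=1}^n|f_l(z)|^2\le1$ on $\Sigma_\nu$, and must prove $\norm{\sum_l\varepsilon_l\otimes f_l(A)x}_{{\rm Rad}(X)}\lesssim\norm{x}$. I would apply the reproducing identity to each $f_l|_{\Sigma_\mu}$ and split $\int_0^\infty\frac{dt}{t}=\sum_{j\in\Zdb}\int_{[2^j,2^{j+1})}\frac{dt}{t}$. The structural point --- and the crux of the proof --- is that the sum over the scale index $j$ must never be carried out on its own (it encodes the cancellation of the reproducing formula, which the triangle inequality would destroy); instead it is kept inside a Rademacher average, so that $\sum_l\varepsilon_l\otimes f_l(A)x$ is exhibited as an average over $s\in[1,2]$ (against $\frac{ds}{s}$, total mass $\ln 2$) of the two-parameter sum
\[
\sum_{j\in\Zdb}\sum_{l=1}^{n}\varepsilon_j\otimes\varepsilon_l\otimes \psi(2^j sA)\,(\psi_{2^j s}f_l)(A)\,x\ \in\ {\rm Rad}({\rm Rad}(X)),
\]
up to a contraction-principle comparison that brings this back to the single-Rademacher quantity we want. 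For fixed $s$ this double sum is controlled by Lemma \ref{3Kaiser} applied with base ${\rm Rad}(X)$-sequence $(\psi(2^j sA)x)_j$, whose norm is $\lesssim\norm{x}$ by the square function estimate above, and with coefficient array governed by the product functions $\psi_{2^j s}f_l$: the two pointwise $\ell^2$-bounds
\[
\sum_{l=1}^{n}\bigl|\psi(2^j s z)f_l(z)\bigr|^2\le\bigl|\psi(2^j s z)\bigr|^2 \qquad\text{and}\qquad \sum_{j\in\Zdb}\bigl|\psi(2^j s z)\bigr|^2\lesssim1
\]
supply precisely the supremum of $\ell^2$-norms that Lemma \ref{3Kaiser} requires. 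A parallel estimate run through $A^*$ and the pairing between ${\rm Rad}(X)$ and ${\rm Rad}(X^*)$ handles, if needed, the component not reached by Lemma \ref{3Kaiser} directly. Since every constant obtained depends only on the calculus constant, on $\psi$, $\mu$, $\nu$ and on the cotype of $X$, never on $n$ or on the $f_l$, this is exactly the estimate defining a quadratic $H^\infty(\Sigma_\nu)$ functional calculus (Definition \ref{3Quad}(1)), so no further approximation is needed.

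What I expect to be routine is the analysis of the reproducing integral (convergence, consistency of the Riesz--Dunford and $H^\infty$ calculi, the elementary sums for $\psi$). The main obstacle is the bookkeeping of the third paragraph: arranging the dyadic splitting so that the scale sum remains inside a Rademacher average visible to Lemma \ref{3Kaiser}, and verifying that the coefficient array extracted from $\psi_{2^j s}f_l$ genuinely meets the hypothesis of that lemma uniformly in $s$, with $\ell^2$-supremum bounded by an absolute constant.
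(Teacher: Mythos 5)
Your setup (the Calder\'on reproducing formula, the dyadic splitting kept inside a Rademacher average, and the discrete square--function estimate $\bignorm{\sum_j\varepsilon_j\otimes\psi(2^jsA)x}_{\Rad(X)}\lesssim\norm{x}$ obtained by randomizing signs in the scalar calculus) is sound, but the crux step is a genuine gap. Lemma \ref{3Kaiser} is a statement about a \emph{scalar} array $(\alpha_{kl})$ acting on a single sequence $(x_k)$ of vectors; in your decomposition the summands are $\psi(2^jsA)\,(\psi_{2^js}f_l)(A)x=(\psi_{2^js}f_l)(A)\bigl[\psi(2^jsA)x\bigr]$, so the $l$-dependence sits in the \emph{operators} $(\psi_{2^js}f_l)(A)$, not in scalars, and the lemma simply does not apply. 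The pointwise bounds $\sum_l|\psi(2^jsz)f_l(z)|^2\le|\psi(2^jsz)|^2$ are bounds on functions; upgrading them to a bound on $\bignorm{\sum_{j,l}\varepsilon_j\otimes\varepsilon_l\otimes(\psi_{2^js}f_l)(A)\psi(2^jsA)x}_{\rara{X}}$ is precisely a quadratic (indeed matricially $R$-bounded) functional calculus estimate --- the very thing being proved --- so the argument is circular as written. Under finite cotype alone you cannot invoke $R$-boundedness of the calculus on balls (that is the Kalton--Weis result, which requires property $(\alpha)$; compare Proposition \ref{3Alpha2} and the remark following it), and the suggested ``parallel estimate through $A^*$ and the pairing of $\Rad(X)$ with $\Rad(X^*)$'' does not repair this: the bilinear expression still contains the same two-index operator family, and in any case the duality between $\Rad(X)$ and $\Rad(X^*)$ is not available in full without K-convexity (e.g.\ $L^1$ has finite cotype but is not K-convex).

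The paper closes exactly this gap by a different decomposition, due to Franks and McIntosh: one writes $F=\sum_l f_l\otimes e_l\in H^\infty(\Sigma_\nu;\ell^2_n)$ as $F(z)=\sum_k b_k\,F_k(z)G_k(z)$ on $\Sigma_\theta$, where $F_k,G_k\in H^{\infty}_{0}(\Sigma_\theta)$ are \emph{fixed} functions (independent of the $f_l$) with $\sum_k|F_k(z)|\le C$ and $\sum_k|G_k(z)|\le C$, and $b_k=(\alpha_{kl})_l\in\ell^2_n$ satisfies $\sup_k\norm{b_k}\le C\bignorm{(\sum_l|f_l|^2)^{1/2}}_{\infty,\Sigma_\nu}$. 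Then the only operator families occurring are $\{F_k(A)\}$ and $\{G_k(A)\}$, whose randomized sums are controlled directly by the scalar $H^{\infty}(\Sigma_\theta)$ calculus, the whole dependence on $(f_l)$ is carried by the genuinely scalar array $(\alpha_{kl})$, and Lemma \ref{3Kaiser} applies exactly as stated; a final approximation with $A_\varepsilon=(\varepsilon I+A)(I+\varepsilon A)^{-1}$ handles convergence. In short, your reproducing-formula bookkeeping does not by itself circumvent the need for a scalar-coefficient decomposition of this Franks--McIntosh type, and without it the appeal to Lemma \ref{3Kaiser} fails.
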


\begin{proof}
The proof relies on a decomposition principle for holomorphic functions,
due to E. Franks and A. McIntosh. 
Let $0<\theta<\nu<\pi$ be two angles. That decomposition principle says 
that there exists a constant $C>0$, and two sequences $(F_k)_{k\geq 1}$ and
$(G_k)_{k\geq 1}$ in $H_{0}^{\infty}(\Sigma_{\theta})$ such that:
\begin{enumerate}
\item [(a)] For any $z\in\Sigma_{\theta}$, we have 
$\sum_{k\geq 1} \vert F_k(z)\vert\,\leq\, C$.

\smallskip
\item [(b)]
For any $z\in\Sigma_{\theta}$, we have 
$\sum_{k\geq 1} \vert G_k(z)\vert\,\leq\, C$.

\smallskip
\item [(c)] For any Banach space $Z$ and for any function $F\in H^{\infty}(\Sigma_\nu;Z)$,
there exists a bounded sequence $(b_k)_{k\geq 1}$ in $Z$ such that 
$$
\norm{b_k}\leq C\norm{F}_{H^{\infty}(\Sigma_{\nu};Z)},\qquad k\geq 1,
$$
and
$$
F(z)\,=\,\sum_{k=1}^{\infty} b_k\, F_k(z)G_k(z),\qquad z\in \Sigma_\theta.
$$
\end{enumerate}
Indeed, \cite[Prop. 3.1]{FMI} and the last paragraph of \cite[Section 3]{FMI}
show this property for $Z=\Cdb$. However it is easy to check that the proof works as well
for $Z$-valued holomorphic functions.

Since $A$ admits a bounded $H^{\infty}(\Sigma_\theta)$ functional calculus, 
we have a uniform estimate
$$
\Bignorm{\sum_k \eta_k F_k(A)}\,\lesssim\,\sup_{z\in\Sigma_\theta}\Bigl\vert 
\sum_k \eta_k F_k(z)\Bigr\vert\,\leq\, \sup_k\vert\eta_k\vert\,\sup_{z\in\Sigma_\theta}\sum_{k} \vert F_k(z)\vert
$$
for finite families $(\eta_k)_{k\geq 1}$ of complex numbers. Hence by (a), we have 
\begin{equation}\label{3(i)}
\sup_{m\geq 1} \,\sup_{\eta_k=\pm 1} \Bignorm{\sum_{k=1}^{m} \eta_k F_k(A)}\,<\infty\,.
\end{equation}
Likewise, (b) implies that
\begin{equation}\label{3(ii)}
\sup_{m\geq 1} \,\sup_{\eta_k=\pm 1} \Bignorm{\sum_{k=1}^{m}\eta_k G_k(A)}\,<\infty\,.
\end{equation}

We will apply property (c) with $Z=\ell^2_n$ for arbitrary $n\geq 1$. 
Let $f_1,\ldots, f_n$ in $H^{\infty}_0(\Sigma_\nu)$, and consider
$$
F= \,\sum_{l=1}^{n} f_l\otimes e_l\ \in H^{\infty}\bigl(\Sigma_\nu;\ell^2_n\bigr).
$$
Let $(b_k)_{k\geq 1}$ be the bounded sequence of $\ell^2_n$ provided by (c),
and write $b_k=(\alpha_{k1},\alpha_{k2},\ldots,\alpha_{kn})$ for any $k\geq 1$. 
Then
\begin{equation}\label{3Decomp}
f_l(z)=\,\sum_{k=1}^{\infty} \alpha_{kl}\, F_k(z)G_k(z),\qquad z\in \Sigma_\theta ,
\end{equation}
for any $l=1,\ldots, n$, and
\begin{equation}\label{3Estim}
\sup_{k}\Bigl(\sum_{l}\vert \alpha_{kl}\vert^2\Bigr)^{\frac{1}{2}}\, \leq C\,
\Bignorm{\Bigl(\sum_{l=1}^{n}\vert f_l\vert^2\Bigr)^{\frac{1}{2}}}_{\infty,\Sigma_\nu}
\end{equation}
by (c) and (\ref{3Square}).
 
For any $l=1,\ldots,n$ and any integer $m\geq 1$, we consider the function
$$
h_{m,l}=\sum_{k=1}^{m}\alpha_{kl}\, F_k G_k,
$$
which belongs to $H_{0}^{\infty}(\Sigma_\nu)$ and approximates $f_l$ by (\ref{3Decomp}). 

Let $x\in X$. By the Khintchine-Kahane inequality (see e.g. \cite[Thm. 1.e.13]{LT2}), we have 
\begin{align*}
\Bignorm{\sum_l \varepsilon_l\otimes h_{m,l}(A)x}_{{\rm Rad}(X)}\,
& =\,\biggl(\int_{\footnotesize{\M}} \Bignorm{\sum_{k,l}\varepsilon_l(u)
\alpha_{kl}F_k(A)G_k(A)x}^2\, d\Pdb(u) \,\biggr)^{\frac{1}{2}}\\
& \lesssim\, \int_{\footnotesize{\M}} \Bignorm{\sum_{k} F_k(A)\Bigl(\sum_l
\varepsilon_l(u)\alpha_{kl} G_k(A)x\Bigr)}\, d\Pdb(u) \,.
\end{align*}
For any $x_1,\ldots, x_m$ in $X$, we have
$$
\sum_{k} F_k(A) x_k\, =\,\int_{\footnotesize{\M}}\Bigl(\sum_k\varepsilon_k(v) F_k(A)
\Bigr)\Bigl(\sum_k \varepsilon_k(v) x_k\Bigr)\,
d\Pdb(v)\,,
$$
hence
\begin{align*}
\Bignorm{\sum_{k} F_k(A) x_k}\,
&\leq  \,\int_{\footnotesize{\M}} \Bignorm{\sum_k \varepsilon_k(v) F_k(A)}\Bignorm{\sum_k
\varepsilon_k(v) x_k}\,
d\Pdb(v)\\
&\lesssim \,\int_{\footnotesize{\M}} \Bignorm{\sum_k\varepsilon_k(v)  x_k}\,
d\Pdb(v)
\end{align*}
by (\ref{3(i)}). Applying this estimate with $x_k=\sum_l
\varepsilon_l(u)\alpha_{kl} G_k(A)x$ and integrating over $(u,v)\in\M\times\M$, we deduce that
$$
\Bignorm{\sum_l \varepsilon_l\otimes h_{m,l}(A)x}_{{\rm Rad}(X)}\,
\lesssim\,\Bignorm{\sum_{k,l}\alpha_{kl}\,\varepsilon_k\otimes\varepsilon_l\otimes G_k(A)x}_{{\rm Rad}({\rm Rad}(X))}.
$$
By assumption, $X$ has finite cotype. Hence it follows from Lemma \ref{3Kaiser} and
(\ref{3Estim}) that
$$
\Bignorm{\sum_l \varepsilon_l\otimes h_{m,l}(A)x}_{{\rm Rad}(X)}\,
\lesssim\,\Bignorm{\Bigl(\sum_{l}\vert f_l\vert^2\Bigr)^{\frac{1}{2}}}_{\infty,\Sigma_\nu}\,
\Bignorm{\sum_l \varepsilon_k\otimes G_k(A)x}_{{\rm Rad}(X)}.
$$
Moreover according to (\ref{3(ii)}), we have
$\bignorm{\sum_k \varepsilon_k\otimes G_k(A)x}_{{\rm Rad}(X)}\lesssim\norm{x}$. Thus 
we finally obtain
$$
\Bignorm{\sum_l \varepsilon_l\otimes h_{m,l}(A)x}_{{\rm Rad}(X)}\,
\lesssim\,\norm{x}\,
\Bignorm{\Bigl(\sum_{l}\vert f_l\vert^2\Bigr)^{\frac{1}{2}}}_{\infty,\Sigma_\nu}.
$$
We deduce the expected result by an entirely classical approximation process, that we 
explain for the convenience of the reader. For any $\varepsilon\in (0,1)$,
set $A_\varepsilon =(\varepsilon I+A)(I+\varepsilon A)^{-1}$. Then $A_\varepsilon$
is bounded and invertible, its spectrum is a compact subset of $\Sigma_\theta$,
and it follows from Cauchy's Theorem that for some contour $\Gamma_\varepsilon$ of finite length included in
the open set $\Sigma_\theta$, we have
$$
h(A_\varepsilon)=\,\frac{1}{2\pi i}\int_{\Gamma_\varepsilon} h(z) R(z,A_\varepsilon)\, dz
$$
for any $h\in H^{\infty}_{0}(\Sigma_\theta)$. 
Since $h_{m,l}\to f_l$ pointwise and $\sup_{m,z}\vert h_{m,l}(z)\vert\,<\infty\,$, the 
above integral representation ensures that 
$$
\lim_{m\to\infty}h_{m,l}(A_\varepsilon)\,=f_l(A_\varepsilon),\qquad l=1,\ldots,n.
$$
Furthermore,
$$
\lim_{\varepsilon\to 0}f_l(A_\varepsilon)\,=\,f_l(A)
$$
for any $l=1,\ldots,n$, by \cite[Lem. 2.4]{LM1}. 

Now observe that the $A_\varepsilon$'s uniformly admit a 
bounded $H^{\infty}(\Sigma_\theta)$ functional calculus, that is, there
exists a constant $K>0$ such that $\norm{h(A_\varepsilon)}\leq K\norm{h}_{\infty,\Sigma_\theta}$
for any $h\in H^{\infty}_{0}(\Sigma_\theta)$ and any $\varepsilon\in (0,1)$. It therefore 
follows from the above proof that there is a constant $K'>0$ such that 
\begin{equation}\label{3Approx}
\Bignorm{\sum_l \varepsilon_l\otimes h_{m,l}(A_\varepsilon)x}_{{\rm Rad}(X)}\,
\leq\, K'\,\norm{x}\,
\Bignorm{\Bigl(\sum_{l}\vert f_l\vert^2\Bigr)^{\frac{1}{2}}}_{\infty,\Sigma_\nu}
\end{equation}
for any $m\geq 1$ and any $\varepsilon\in (0,1)$. 
Then (\ref{3Q0}) follows from (\ref{3Approx}).
\end{proof}

We now state a similar result for Ritt operators and their functional calculus.

\begin{theorem}\label{3QQ}
Assume that $X$ has finite cotype and let $T$ be a Ritt operator
on $X$ with a bounded $H^{\infty}(B_\gamma)$ functional calculus. Then 
$T$ admits a quadratic $H^{\infty}(B_\nu)$ functional calculus for any 
$\nu\in\bigl(\gamma,\frac{\pi}{2}\bigr)$.
\end{theorem}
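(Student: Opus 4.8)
The plan is to prove Theorem \ref{3QQ} by reducing it, via the transfer principle, to the sectorial version Theorem \ref{3Q} which we already have in hand. More precisely, if $T$ is a Ritt operator of type $\alpha$ with a bounded $H^\infty(B_\gamma)$ functional calculus, then by Proposition \ref{4TA} the sectorial operator $A=I-T$ admits a bounded $H^\infty(\Sigma_\theta)$ functional calculus for some $\theta\in\bigl(0,\frac{\pi}{2}\bigr)$; looking at the proof of Proposition \ref{4TA}, one may in fact take $\theta$ as close as we wish to $\frac{\pi}{2}$, and in particular arrange $\theta<\nu$ for the given $\nu\in\bigl(\gamma,\frac{\pi}{2}\bigr)$. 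By Theorem \ref{3Q}, $A$ then admits a \emph{quadratic} $H^\infty(\Sigma_\nu)$ functional calculus. The remaining task is to pass from the quadratic sectorial calculus of $A$ back to the quadratic Ritt calculus of $T$ with the same angle $\nu$.

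For that passage I would reuse exactly the substitution underlying Proposition \ref{4TA}. Fix $\varphi_1,\dots,\varphi_n\in H^\infty_0(B_\nu)$ and set $f_l(z)=\varphi_l(1-z)$ on $\Delta_\nu=1-B_\nu$. Since $\Delta_\nu\subset\Sigma_\nu$, each $f_l$ extends no further than $\Delta_\nu$ a priori, so the decomposition argument from the proof of Proposition \ref{4TA} is needed: write $f_l=f_{1,l}+f_{2,l}$ using the Cauchy splitting along $\partial\Delta_\beta=\{\Gamma_1,\Gamma_2\}$, where $f_{1,l}$ is (a correction of) the $\Gamma_1$-part and $f_{2,l}$ the $\Gamma_2$-part. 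One shows, just as in Proposition \ref{4TA}, that $g_l:=f_{1,l}+\frac{f_{2,l}(0)}{1+z}$ lies in $H^\infty_0(\Sigma_\theta)\subset H^\infty_0(\Sigma_\nu)$, that $f_{2,l}(A)$ is given by a Riesz-Dunford integral over $\Gamma_2$, and that $\varphi_l(T)=f_{1,l}(A)+f_{2,l}(A)=g_l(A)-f_{2,l}(0)(I+A)^{-1}+f_{2,l}(A)$. The point is that all of these operations are \emph{linear and uniformly bounded in $l$} at the level of the $\ell^2_n$-valued function $F=\sum_l \varphi_l\otimes e_l$: the map $F\mapsto (g_l)_l$ is bounded from $H^\infty(B_\nu;\ell^2_n)$ into $H^\infty(\Sigma_\nu;\ell^2_n)$ with constant independent of $n$ (this is the content of estimates like (\ref{4Key}) and (\ref{4C1}) applied coordinatewise), because the kernels $\frac{1}{\lambda-z}$ defining the splitting act as fixed $\ell^2_n$-bounded operations.

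Granting that, I would estimate $\bignorm{\sum_l\varepsilon_l\otimes\varphi_l(T)x}_{{\rm Rad}(X)}$ by splitting into the three pieces. For the $g_l(A)$ piece, the quadratic $H^\infty(\Sigma_\nu)$ calculus of $A$ (Theorem \ref{3Q}) gives
$$
\Bignorm{\sum_l\varepsilon_l\otimes g_l(A)x}_{{\rm Rad}(X)}\,\lesssim\,\norm{x}\,\Bignorm{\Bigl(\sum_l\vert g_l\vert^2\Bigr)^{\frac12}}_{\infty,\Sigma_\nu}\,\lesssim\,\norm{x}\,\Bignorm{\Bigl(\sum_l\vert\varphi_l\vert^2\Bigr)^{\frac12}}_{\infty,B_\nu}\,,
$$
the last inequality being the $\ell^2_n$-boundedness of $F\mapsto(g_l)_l$. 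For the term $\sum_l\varepsilon_l\otimes f_{2,l}(0)(I+A)^{-1}x$, the scalars $(f_{2,l}(0))_l$ form a vector in $\ell^2_n$ of norm $\lesssim\bignorm{(\sum_l\vert\varphi_l\vert^2)^{1/2}}_{\infty,B_\nu}$ (a single point evaluation of the Cauchy integral), so this is just $(I+A)^{-1}x$ times a fixed $\ell^2_n$-vector and its ${\rm Rad}(X)$-norm is dominated accordingly. For the $f_{2,l}(A)$ piece I would pull the integral over $\Gamma_2$ outside the Rademacher sum and use that $\{R(\lambda,A):\lambda\in\Gamma_2\}$ is a bounded (indeed compact-parametrized, hence manageable) family together with Minkowski's inequality in ${\rm Rad}(X)$, bounding $\bignorm{\sum_l\varepsilon_l\otimes f_{2,l}(A)x}_{{\rm Rad}(X)}$ by $\int_{\Gamma_2}\bignorm{R(\lambda,A)x}\,(\sum_l\vert f(\lambda)\vert^2)^{1/2}\,\vert d\lambda\vert\lesssim\norm{x}\,\bignorm{(\sum_l\vert\varphi_l\vert^2)^{1/2}}_{\infty,B_\nu}$ since $\Gamma_2$ has finite length and stays away from $\sigma(A)$. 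Summing the three contributions yields the desired quadratic $H^\infty(B_\nu)$ estimate for $T$; as noted after Definition \ref{3Quad}, it suffices to verify this on polynomials, and the approximation is handled exactly as in Proposition \ref{2Approx} and at the end of the proof of Theorem \ref{3Q} (replacing $T$ by $rT$, $r\to1^-$, via Lemma \ref{2rT}). The main obstacle I anticipate is the bookkeeping in checking that the function-space operations $\varphi_l\mapsto g_l$ and $\varphi_l\mapsto f_{2,l}$ are bounded \emph{uniformly in $n$ on $\ell^2_n$-valued functions}, i.e.\ that the constants in (\ref{4C1})--(\ref{4Key}) survive coordinatewise into an $\ell^2$-norm bound; this is true because those estimates only use pointwise domination of Cauchy kernels, but it must be stated carefully since it is the crux of transferring the \emph{quadratic} (not merely scalar) calculus.
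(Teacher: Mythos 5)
Your route is exactly the one the paper takes: it deduces Theorem \ref{3QQ} from Theorem \ref{3Q} by observing that the transfer principle of Proposition \ref{4TA} persists, ``with essentially the same proof'', for the quadratic functional calculus (the paper also mentions, as an alternative, redoing the proof of Theorem \ref{3Q} with a Franks--McIntosh decomposition adapted to Stolz domains from \cite[Section 5]{FMI}). What you have written out is precisely the quadratic version of the transfer argument that the paper leaves to the reader, including the correct observation that the Cauchy-splitting estimates such as (\ref{4C1}) and (\ref{4Key}) pass to $\ell^2_n$-valued functions with the same constants, and the correct treatment of the rank-one term $f_{2,l}(0)(I+A)^{-1}$ and of the $\Gamma_2$-integral via Minkowski's inequality in ${\rm Rad}(X)$.

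There is, however, one angle-bookkeeping slip that must be repaired, because as written the key step fails: you apply the quadratic $H^{\infty}(\Sigma_\nu)$ calculus of $A$ to the functions $g_l$ and invoke $\bignorm{\bigl(\sum_l\vert g_l\vert^2\bigr)^{1/2}}_{\infty,\Sigma_\nu}$, but $g_l$ is built from $f_{1,l}$, whose defining contour $\Gamma_1$ lies inside $\Sigma_\nu$ (it sits on the rays of angle $\pm\beta$ with $\beta<\nu$), so $g_l$ is in general unbounded on $\Sigma_\nu$; likewise the inclusion $H^{\infty}_0(\Sigma_\theta)\subset H^{\infty}_0(\Sigma_\nu)$ you assert goes the wrong way. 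The fix is immediate and stays inside your own scheme: by Proposition \ref{4TA}, $A$ has a bounded $H^{\infty}(\Sigma_\gamma)$ calculus, hence by Theorem \ref{3Q} a quadratic $H^{\infty}(\Sigma_{\theta'})$ calculus for \emph{every} $\theta'\in(\gamma,\pi)$; choose $\gamma<\theta'<\beta<\nu$, perform the splitting along $\partial\Delta_\beta$, note that $g_l\in H^{\infty}_0(\Sigma_{\theta'})$ with the $\ell^2$-valued bound $\bignorm{\bigl(\sum_l\vert g_l\vert^2\bigr)^{1/2}}_{\infty,\Sigma_{\theta'}}\lesssim\bignorm{\bigl(\sum_l\vert\varphi_l\vert^2\bigr)^{1/2}}_{\infty,B_\nu}$, and apply the quadratic calculus at the angle $\theta'$ (not $\nu$). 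With that correction your argument is the paper's proof, carried out in detail.
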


\begin{proof}
There are two ways to get to this result. The first one is to
mimic the proof of Theorem \ref{3Q}, using a Franks-McIntosh decomposition
adapted to Stolz domains. The existence of such decompositions follows 
from \cite[Section 5]{FMI}. 

The second way is to 
observe that the transfer principle stated as Proposition \ref{4TA}
holds true (with essentially the same proof) for the quadratic functional
calculus. Namely with $A=I-T$, the following are equivalent:
\begin{enumerate}
\item [(i)] $T$ admits a quadratic $H^{\infty}(B_\gamma)$ functional
calculus for some $\gamma\in \bigl(0,\frac{\pi}{2}\bigr)$.
\item [(ii)] $A$ admits a quadratic $H^{\infty}(\Sigma_\theta)$ functional
calculus for some $\theta\in \bigl(0,\frac{\pi}{2}\bigr)$.
\end{enumerate}
Hence the result follows from Theorem \ref{3Q}, the implication
`(i)$\Rightarrow$(ii)' of Proposition \ref{4TA} and the implication 
`(ii)$\Rightarrow$(i)' above. 
\end{proof}

Banach spaces with property $(\alpha)$ have finite cotype, hence they 
satisfy Theorems \ref{3Q} and 
\ref{3QQ}. It turns out that a much stronger $H^\infty$ calculus 
property holds on those spaces, as follows.

\begin{proposition}\label{3Alpha2}
Assume that $X$ has property $(\alpha)$.
\begin{enumerate}
\item [(1)] Let $A$ be a sectotial operator
on $X$ with a bounded $H^{\infty}(\Sigma_\theta)$ functional calculus. Then 
for any 
$\nu\in(\theta,\pi)$, there exists a constant $K>0$ such that
\begin{equation}\label{3Mat1}
\Bignorm{\sum_{l,j=1}^{n}\varepsilon_l\otimes f_{lj}(A)x_j}_{{\rm Rad}(X)}\,\leq
K\sup_{z\in\Sigma_\nu}\bignorm{[f_{lj}(z)]}_{M_n}\,
\Bignorm{\sum_{j=1}^{n}\varepsilon_j\otimes  x_j}_{{\rm Rad}(X)}
\end{equation}
for any $n\geq 1$, for any matrix $[f_{lj}]$ of elements of $H_{0}^{\infty}(\Sigma_\nu)$ and for any 
$x_1,\ldots, x_n$ in $X$.
\item [(2)] Let $T$ be a Ritt operator
on $X$ with a bounded $H^{\infty}(B_\gamma)$ functional calculus. Then 
for any  $\nu\in\bigl(\gamma,\frac{\pi}{2}\bigr)$, there exists a constant $K>0$ such that
\begin{equation}\label{3Mat2}
\Bignorm{\sum_{l,j=1}^{n}\varepsilon_l\otimes \varphi_{lj}(T)x_j}_{{\rm Rad}(X)}\,\leq
K\sup_{z\in B_\nu}\bignorm{[\varphi_{lj}(z)]}_{M_n}\,
\Bignorm{\sum_{j=1}^{n}\varepsilon_j\otimes  x_j}_{{\rm Rad}(X)}
\end{equation}
for any $n\geq 1$, for any matrix $[\varphi_{lj}]$ of elements of $H_{0}^{\infty}(B_\nu)$ and for any 
$x_1,\ldots, x_n$ in $X$.
\end{enumerate}
\end{proposition}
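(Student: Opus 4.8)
The plan is to mimic the structure of the proof of Theorem \ref{3Q}, but feeding the Franks--McIntosh decomposition with matrix-valued (rather than $\ell^2_n$-valued) holomorphic functions, and then replacing the Kaiser--Weis estimate (Lemma \ref{3Kaiser}) by the definition of property $(\alpha)$. For part (1): fix $0<\theta<\nu<\pi$ with $A$ having a bounded $H^\infty(\Sigma_\theta)$ calculus, and take the two sequences $(F_k)_{k\geq 1}$, $(G_k)_{k\geq 1}$ in $H^\infty_0(\Sigma_\theta)$ satisfying (a), (b), (c) of the proof of Theorem \ref{3Q}. Given a matrix $[f_{lj}]$ of functions in $H^\infty_0(\Sigma_\nu)$, I would form $F=[f_{lj}]\in H^\infty(\Sigma_\nu;M_n)$, with $\norm{F}_{H^\infty(\Sigma_\nu;M_n)}=\sup_{z\in\Sigma_\nu}\bignorm{[f_{lj}(z)]}_{M_n}$, and apply (c) with $Z=M_n$. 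This produces a bounded sequence $(b_k)_{k\geq 1}$ in $M_n$, say $b_k=[\alpha^{(k)}_{lj}]$, with $\norm{b_k}_{M_n}\leq C\norm{F}_{H^\infty(\Sigma_\nu;M_n)}$, such that $f_{lj}(z)=\sum_k \alpha^{(k)}_{lj}F_k(z)G_k(z)$ on $\Sigma_\theta$.

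Next I would run the same averaging computation as in Theorem \ref{3Q}. Writing $h_{m,lj}=\sum_{k=1}^m \alpha^{(k)}_{lj}F_kG_k$, the Khintchine--Kahane inequality and estimate (\ref{3(i)}) (unconditional $\ell^1$-type bound on $\sum \varepsilon_k F_k(A)$) give, for $x_1,\dots,x_n\in X$,
$$
\Bignorm{\sum_{l,j}\varepsilon_l\otimes h_{m,lj}(A)x_j}_{{\rm Rad}(X)}\,
\lesssim\,\Bignorm{\sum_{k,l,j}\alpha^{(k)}_{lj}\,\varepsilon_k\otimes\varepsilon_l\otimes G_k(A)x_j}_{{\rm Rad}({\rm Rad}(X))}.
$$
Now instead of Lemma \ref{3Kaiser} I would invoke property $(\alpha)$: the doubly-indexed system $\varepsilon_k\otimes\varepsilon_l\otimes y_{kl}$ may be multiplied by any bounded scalar array without changing the ${\rm Rad}({\rm Rad}(X))$-norm more than a constant. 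The point is that the matrix contraction $[y_l]_{l}\mapsto\bigl[\sum_j \alpha^{(k)}_{lj}y_j\bigr]_l$ for each fixed $k$, with $\sup_k\norm{b_k}_{M_n}\lesssim\norm{F}$, can be realized on Rademacher averages using property $(\alpha)$ together with the standard fact that property $(\alpha)$ makes ${\rm Rad}({\rm Rad}(X))$ stable under amplification by bounded matrices (one writes the matrix action as a combination of permutations and diagonal multiplications, or cites \cite[proof of Prop. 3.2 or similar]{KaW2}/\cite{JLX} for the precise lemma). This bounds the last display by
$$
\norm{F}_{H^\infty(\Sigma_\nu;M_n)}\,\Bignorm{\sum_{k}\varepsilon_k\otimes G_k(A)\Bigl(\hbox{something in }{\rm Rad}(X)\hbox{ built from }(x_j)\Bigr)}_{{\rm Rad}({\rm Rad}(X))},
$$
and then (\ref{3(ii)}) collapses the $G_k(A)$ factor, leaving $\norm{F}_{H^\infty(\Sigma_\nu;M_n)}\bignorm{\sum_j\varepsilon_j\otimes x_j}_{{\rm Rad}(X)}$, as wanted. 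Finally the same $A_\varepsilon$-regularization as in Theorem \ref{3Q} passes from $h_{m,lj}$ to $f_{lj}$, yielding (\ref{3Mat1}).

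For part (2), I would not rerun the argument: instead, exactly as in Theorem \ref{3QQ}, I would observe that the transfer principle of Proposition \ref{4TA} is compatible with this matrix-valued functional calculus. Concretely, if $T$ is a Ritt operator with bounded $H^\infty(B_\gamma)$ calculus and $A=I-T$, then substituting $f_{lj}(z)=\varphi_{lj}(1-z)$ (and using $\Delta_\nu=1-B_\nu\subset\Sigma_\nu$, $\sup_{z\in B_\nu}\norm{[\varphi_{lj}(z)]}_{M_n}=\sup_{z\in\Delta_\nu}\norm{[f_{lj}(z)]}_{M_n}$) reduces (\ref{3Mat2}) to (\ref{3Mat1}) for $A$, via the one-sided direction of Proposition \ref{4TA} that turns a bounded $H^\infty(B_\gamma)$ calculus for $T$ into one for $A$ on a slightly larger sector; the $f_1,f_2$ splitting in that proof applies verbatim entrywise. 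I expect the main obstacle to be the precise formulation and justification of the "property $(\alpha)$ implies stability of ${\rm Rad}({\rm Rad}(X))$ under bounded-matrix amplification" step — one has to be careful that the relevant constant depends only on the property-$(\alpha)$ constant and not on $n$, and that the matrix $b_k$ acts in the right variable (on the $j$-index against the $x_j$'s, contracting to the $l$-index), which is why writing the Rademacher averages out fully and tracking indices is essential here. Everything else (Khintchine--Kahane, $\ell^1$-unconditionality of $\sum\varepsilon_kF_k(A)$ and $\sum\varepsilon_kG_k(A)$, the $A_\varepsilon$-approximation) is identical to Theorem \ref{3Q}.
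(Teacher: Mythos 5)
Your proposal follows essentially the same route as the paper: apply the Franks--McIntosh decomposition with $Z=M_n$ to $F=[f_{lj}]\in H^\infty(\Sigma_\nu;M_n)$, run the same Khintchine--Kahane/randomization argument as in Theorem \ref{3Q} using (\ref{3(i)}) and (\ref{3(ii)}), replace the Kaiser--Weis estimate by a property-$(\alpha)$ matrix-amplification estimate, conclude by the $A_\varepsilon$-approximation, and deduce part (2) from part (1) via the transfer principle as in Theorem \ref{3QQ}. One caveat on the step you yourself flag as the main obstacle: the amplification inequality you need is exactly (\ref{3Strong-Alpha}), i.e.\ $\bignorm{\sum_{k,l,j}\varepsilon_k\otimes\varepsilon_l\otimes b_k(l,j)x_{kj}}\lesssim\sup_k\norm{b_k}_{M_n}\bignorm{\sum_{k,j}\varepsilon_k\otimes\varepsilon_j\otimes x_{kj}}$, which is a genuine lemma of Haak and Kunstmann \cite[Lem.\ 5.2]{HK}; your first suggested justification, writing a contraction of $M_n$ as a uniformly bounded absolutely convex combination of permutations composed with unimodular diagonals, does not work (the absolutely convex hull of such matrices consists of matrices whose entrywise modulus is doubly substochastic, so e.g.\ the Fourier matrix $n^{-1/2}[\omega^{lj}]$ requires a factor of order $n^{1/2}$), so the definition of property $(\alpha)$ alone, applied to scalar multipliers, is not enough and you must cite the Haak--Kunstmann result (or reprove it, e.g.\ via Gaussian comparison) rather than KaW2/JLX. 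With that reference in place, your argument coincides with the paper's proof.
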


\begin{proof} A Banach space $X$ with property $(\alpha)$
satisfies the following property: there exists a constant $C>0$ such that for
any $n\geq 1$, for any finite family $(b_k)_{k\geq 1}$ of elements of $M_n$
that we denote by $b_k=[b_k(l,j)]_{1\leq l,j\leq n}$ and for any $n$-tuple
$(x_{k1})_{k\geq 1},\ldots,(x_{kn})_{k\geq 1}$ of families in $X$,
\begin{equation}\label{3Strong-Alpha}
\Bignorm{\sum_{k\geq 1}\sum_{l,j=1}^{n} \varepsilon_k\otimes 
\varepsilon_l\otimes b_k(l,j) x_{kj}}_{{\rm Rad}({\rm Rad}(X))}\, \leq
\, C\sup_k\norm{b_k}_{M_n}\,\Bignorm{\sum_{k,j}\varepsilon_k\otimes
\varepsilon_j\otimes x_{kj}}_{{\rm Rad}({\rm Rad}(X))}.
\end{equation}
This strengthening of (\ref{3DefQ}) for those spaces 
is due to Haak and Kunstmann \cite[Lem. 5.2]{HK} (see also \cite{H}). 

Let us explain (1). We consider an $n\times n$ matrix $[f_{lj}]$ of elements 
of $H^{\infty}_{0}(\Sigma_\nu)$ and we associate $F\in H^\infty(\Sigma_\nu;M_n)$
defined by
$$
F(z) =\bigl[f_{lj}(z)\bigr],\qquad z\in\Sigma_\nu.
$$
Then  arguing as in the proof of Theorem \ref{3Q}
and applying the Franks-McIntosh decomposition principle with $Z=M_n$, we find
a sequence $(b_k)_{k\geq 1}$ of $n\times n$ matrices $b_k=[b_k(l,j)]_{1\leq l,j\leq n}$
such that 
$$
f_{lj} (z) \,=\,\sum_{k=1}^{\infty} b_k(l,j)\, F_k(z)G_k(z),\qquad z\in\Sigma_\theta,
$$
for any $l,j=1,\ldots,n$, and
$$
\sup_k\norm{b_k}_{M_n}\,\leq\, C\,\sup\bigl\{\bignorm{\bigl[f_{lj}(z)\bigr]}_{M_n}\, :\, z\in \Sigma_\nu\bigr\}.
$$
Using the above results in the place of (\ref{3Decomp}) and (\ref{3Estim}), the estimate 
(\ref{3Strong-Alpha}) in the place of (\ref{3DefQ}), and arguing as in the proof
of Theorem \ref{3Q}, we obtain (\ref{3Mat1}). Details are left to the reader.

Part (2) can be deduced from part (1) in the same manner that Theorem \ref{3QQ}
was deduced from Theorem \ref{3Q}.
\end{proof}

Part (2) of the above proposition generalizes \cite[Thm. 3.3]{LMX1}, 
where this property is proved for (commutative) $L^p$-spaces.

\begin{remark} 
Property (\ref{3Mat1}) means that
the homomorphism $H^{\infty}_{0}(\Sigma_\nu)\to B(X)$ induced by the functional calculus
is matricially $R$-bounded in the sense of \cite[Section 4]{KL}. Restricting this property
to column matrices, we obtain the property proved in Theorem \ref{3Q}. On the other hand,
restricting (\ref{3Mat1}) to diagonal matrices, we find the following  
result of Kalton-Weis \cite{KaW2} (see also \cite[Thm. 12.8]{KW}): if $A$
has a bounded $H^{\infty}(\Sigma_\theta)$ functional calculus on $X$ with property $(\alpha)$,
then for any $\nu\in(\theta,\pi)$, the functional calculus
homomorphism $H^{\infty}_{0}(\Sigma_\nu)\to B(X)$ maps the unit ball of $H^{\infty}_{0}(\Sigma_\nu)$
into an $R$-bounded subset of $B(X)$.
\end{remark}

We are now going to show that Theorem \ref{3Q} does not hold true 
on all Banach spaces, namely the next proposition
shows that it fails on $c_0$. 
A similar construction shows that 
Theorem \ref{3QQ} also fails on $c_0$.

\begin{proposition}\label{3Cex} Let $A\colon c_0\to c_0$ be defined by 
$$
A(w) = \bigl(2^{-j}w_j\bigr)_{j\geq 1},\qquad w=(w_j)_{j\geq 1}\in c_0.
$$
Then $A$ is a sectorial operator and for any $\theta\in (0,\pi)$:
\begin{enumerate}
\item [(1)] $A$ admits a bounded $H^{\infty}(\Sigma_\theta)$
functional calculus.
\item [(2)] $A$ does not have a quadratic $H^{\infty}(\Sigma_\theta)$
functional calculus.
\end{enumerate}
\end{proposition}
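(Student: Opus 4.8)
The plan is to treat the two parts essentially separately, the first being a routine verification and the second the crux of the matter. For part (1), I would first observe that $A$ is a diagonal operator on $c_0$ with eigenvalues $(2^{-j})_{j\geq 1}$, all lying on the positive real axis and accumulating only at $0$. Since $A$ is diagonal, for any rational function $r$ with poles off $\overline{\Sigma_\theta}$ (in particular off $[0,\infty)$) one has $r(A)(w)=\bigl(r(2^{-j})w_j\bigr)_j$, so $\norm{r(A)}_{B(c_0)}=\sup_j\vert r(2^{-j})\vert\leq\norm{r}_{\infty,\Sigma_\theta}$. Thus the functional calculus is contractive on $H^\infty_0(\Sigma_\theta)$, which is exactly a bounded $H^\infty(\Sigma_\theta)$ functional calculus; one checks sectoriality the same way, since $zR(z,A)(w)=\bigl(\frac{z}{z-2^{-j}}w_j\bigr)_j$ has norm bounded uniformly for $z$ outside any larger sector. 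This argument works for every $\theta\in(0,\pi)$, which is consistent with the fact that $A$ is a scalar-type operator on $c_0$.

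For part (2), the idea is to exploit that Rademacher averages in $c_0$ behave like $\sup$-norms of coordinatewise $\ell^2$-sums, so that quadratic $H^\infty$ estimates in $c_0$ force genuine $\ell^2$-valued supremum control, which $H^\infty(\Sigma_\theta)$ does not provide. Concretely: if $f_1,\dots,f_n\in H^\infty_0(\Sigma_\theta)$, then because $A$ is diagonal and $\norm{\sum_l\varepsilon_l\otimes y_l}_{\Rad(c_0)}\approx\sup_j\bigl(\sum_l\vert y_l(j)\vert^2\bigr)^{1/2}$ for finitely supported $(y_l)$ in $c_0$ (up to the Khintchine constant, which is harmless), evaluating at $x=e_{j_0}$ gives $\norm{\sum_l\varepsilon_l\otimes f_l(A)e_{j_0}}_{\Rad(c_0)}\approx\bigl(\sum_l\vert f_l(2^{-j_0})\vert^2\bigr)^{1/2}$. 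So a quadratic $H^\infty(\Sigma_\theta)$ estimate would say
$$
\sup_j\Bigl(\sum_{l=1}^n\vert f_l(2^{-j})\vert^2\Bigr)^{\frac12}\,\lesssim\,\Bignorm{\Bigl(\sum_{l=1}^n\vert f_l\vert^2\Bigr)^{\frac12}}_{\infty,\Sigma_\theta}.
$$
But the right-hand side is $\sup_{z\in\Sigma_\theta}\bigl(\sum_l\vert f_l(z)\vert^2\bigr)^{1/2}$, while the points $2^{-j}$ merely form a discrete subset of $(0,\infty)\subset\Sigma_\theta$; what one actually needs to contradict is that the $f_l$ can be chosen so the left-hand side (the sup over the discrete set $\{2^{-j}\}$) is bounded while $\norm{x}_{A,1}$-type quantities blow up. So the real task is to produce, for each $N$, functions $f_1,\dots,f_n\in H^\infty_0(\Sigma_\theta)$ with $\sup_{z\in\Sigma_\theta}\bigl(\sum_l\vert f_l(z)\vert^2\bigr)^{1/2}\leq 1$ but $\norm{\sum_l\varepsilon_l\otimes f_l(A)x}_{\Rad(c_0)}$ as large as we like for a suitable unit vector $x$.

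The construction I would use is the standard Littlewood--Paley family: fix a single $\psi\in H^\infty_0(\Sigma_\theta)$ with $\int_0^\infty\vert\psi(t)\vert^2\,\frac{dt}{t}=1$ and nonzero, and set $f_l(z)=\psi(2^l z)$ (or $\psi(s_l z)$ for a well-spaced sequence $s_l$), so that $\sum_l\vert f_l(z)\vert^2$ is essentially a Riemann sum for $\int\vert\psi(tz)\vert^2\frac{dt}{t}$ and hence uniformly bounded on $\Sigma_\theta$ — this gives the denominator $O(1)$. Then pick $x=e_{j_0}$ with $2^{-j_0}$ chosen so that many of the $2^{-j_0}s_l$ land near the "bump" of $\psi$; since $f_l(A)e_{j_0}=\psi(2^{-j_0}s_l)e_{j_0}$, one gets $\norm{\sum_l\varepsilon_l\otimes f_l(A)e_{j_0}}_{\Rad(c_0)}\approx\bigl(\sum_l\vert\psi(2^{-j_0}s_l)\vert^2\bigr)^{1/2}$, and by choosing $s_l$ to oversample the region where $\vert\psi\vert$ is bounded below, this sum can be made $\gtrsim\sqrt n$ while the numerator estimate stays $O(1)$. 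Letting $n\to\infty$ contradicts any finite constant $K$ in Definition \ref{3Quad}(1). The main obstacle I expect is bookkeeping the discreteness: one must verify that for a fixed dilation structure $s_l$ one can \emph{simultaneously} keep $\sup_{z}\sum_l\vert f_l(z)\vert^2$ bounded (a global estimate on the whole sector, requiring the $s_l$ not too dense) and make $\sum_l\vert\psi(2^{-j_0}s_l)\vert^2$ large at one point $2^{-j_0}$ (requiring the $s_l$ dense enough near that scale). These are reconciled by using geometrically spaced $s_l=2^l$ over a long but finite range and noting the two sums are comparable (both $\approx\int\vert\psi(t\cdot)\vert^2\frac{dt}{t}$ over the relevant range), so the numerator grows like the length of the range while the denominator stays bounded by the full integral — a contradiction once the range is long enough. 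I would also record that the same diagonal-operator argument, with $\Sigma_\theta$ replaced by a Stolz domain $B_\gamma$ and $2^{-j}$ replaced by a sequence in $B_\gamma$ accumulating at $1$, disproves Theorem \ref{3QQ} on $c_0$, as asserted.
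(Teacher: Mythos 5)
Your part (1) is fine and coincides with the paper's argument. Part (2), however, has a genuine gap, and it is fatal to the whole construction. You test the quadratic estimate only on single basis vectors $x=e_{j_0}$. For the diagonal operator $A$ this gives
$$
\Bignorm{\sum_{l}\varepsilon_l\otimes f_l(A)e_{j_0}}_{{\rm Rad}(c_0)}\,=\,\Bigl(\sum_{l}\vert f_l(2^{-j_0})\vert^2\Bigr)^{\frac{1}{2}}\,\leq\,\sup_{z\in\Sigma_\theta}\Bigl(\sum_{l}\vert f_l(z)\vert^2\Bigr)^{\frac{1}{2}},
$$
since $2^{-j_0}$ is itself a point of $\Sigma_\theta$. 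So on such vectors the quadratic estimate holds trivially with constant $1$, for \emph{every} choice of $f_1,\dots,f_n$; no Littlewood--Paley family $f_l(z)=\psi(s_l z)$ can help, because the quantity you want to make large, $\bigl(\sum_l\vert\psi(2^{-j_0}s_l)\vert^2\bigr)^{1/2}$, is literally the value at the point $z=2^{-j_0}$ of the same function whose supremum over the sector you need to keep bounded. Your proposed reconciliation (``the numerator grows with the length of the range while the denominator stays bounded'') is therefore impossible: oversampling near the scale $2^{-j_0}$ inflates numerator and denominator together. A related problem is your opening claim that $\bignorm{\sum_l\varepsilon_l\otimes y_l}_{{\rm Rad}(c_0)}\approx\sup_j\bigl(\sum_l\vert y_l(j)\vert^2\bigr)^{1/2}$ up to a Khintchine constant: this two-sided equivalence is false in $c_0$ (it is exactly the failure of finite cotype), and if it were true it would in fact \emph{prove} that $A$ has a quadratic functional calculus, i.e.\ the opposite of part (2).

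The genuine obstruction lives on vectors with many nonzero coordinates, where the ${\rm Rad}(c_0)$-norm $\bigl(\int\sup_j\bigl\vert\sum_l\varepsilon_l(u)f_l(2^{-j})w_j\bigr\vert^2 d\Pdb(u)\bigr)^{1/2}$ can exceed $\sup_j\bigl(\sum_l\vert f_l(2^{-j})w_j\vert^2\bigr)^{1/2}$ by an unbounded factor. The paper's proof takes $w_j=1$ for $j\leq m$ and then needs two further ingredients that your sketch does not supply: (a) the vector-valued Carleson interpolation theorem, applied to the interpolating sequence $(2^{-j})$, which allows one to \emph{prescribe} arbitrary values $f_l(2^{-j})=\alpha_{lj}$ while keeping $\bignorm{(\sum_l\vert f_l\vert^2)^{1/2}}_{\infty,\Sigma_\theta}\leq K\sup_j\bigl(\sum_l\vert\alpha_{lj}\vert^2\bigr)^{1/2}$ with a constant independent of $n,m$; and (b) the choice $\alpha_{lj}=\langle h_l,y_j\rangle$, where $(h_l)$ is an orthonormal basis of $\ell^2_n$ and $(y_j)_{j\leq m}$ is a finite family in the unit ball of $\ell^2_n$ with $\norm{y}\leq 2\sup_j\vert\langle y,y_j\rangle\vert$, which forces $\bignorm{\sum_{l,j}\alpha_{lj}\,\varepsilon_l\otimes e_j}_{{\rm Rad}(c_0)}\geq \tfrac{1}{2}n^{1/2}$ while the prescribed column norms are $\leq 1$. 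Letting $n\to\infty$ gives the contradiction. Without an interpolation device of this kind (or some other way to decouple the values $f_l(2^{-j})$ from the sectorial supremum) and without a choice of data exploiting the infinite cotype of $c_0$, the counterexample cannot be completed.
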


\begin{proof}
The facts that $A$ is sectorial and that property (1) holds are easy. Indeed, for any 
$\theta\in(0,\pi)$ and any $f\in H^{\infty}_{0}(\Sigma_\theta)$, we have
$$
[f(A)](w)=(f(2^{-j})w_j)_{j\geq 1}
$$
for any $w=(w_j)_{j\geq 1}$ in $c_0$,
and hence 
$$
\norm{f(A)}\leq\norm{f}_{L^{\infty}(0,\infty)}.
$$

To prove (2), let us assume that $A$ admits a quadratic $H^{\infty}(\Sigma_\theta)$
functional calculus for some $\theta\in(0,\pi)$. Let $(e_j)_{j\geq 1}$ denote the
canonical basis of $c_0$. For any integers  $n,m\geq 1$, for any  $w_1,\ldots, w_m$ in $\Cdb$
and any $f_1,\ldots,f_n$ in $H^{\infty}_0(\Sigma_\theta)$, 
$$
\sum_l\varepsilon_l\otimes f_l(A)\Bigl(\sum_j w_j e_j\Bigr)\,=\,\sum_{l,j}
f_l(2^{-j}) w_j\, \varepsilon_l \otimes e_j\,.
$$
Hence there is a constant $C\geq 0$ (not depending on $n,m,w_j$) such  that
$$
\Bignorm{
\sum_{l=1}^n\sum_{j=1}^{m} 
f_l(2^{-j}) w_j\, \varepsilon_l \otimes e_j}_{{\rm Rad}(c_0)}\,\leq
C\sup_j\vert w_j\vert\,\Bignorm{\Bigm(\sum_{l=1}^{n}\vert f_l\vert^2\Bigr)^{\frac{1}{2}}}_{\infty, \Sigma_\theta}
$$
for any $f_1,\ldots,f_n$ in $H^{\infty}_0(\Sigma_\theta)$, 
By an entirely classical approximation argument, the above estimate holds as well
when the $f_l$'s belong to $H^{\infty}(\Sigma_\theta)$. Applying this with
$w_j=1$ for all $j$, one obtains 
\begin{equation}\label{3Cex1}
\Bignorm{\sum_{l=1}^n\sum_{j=1}^{m} 
f_l(2^{-j})\, \varepsilon_l \otimes e_j}_{{\rm Rad}(c_0)}\,\leq
C \,\Bignorm{\Bigm(\sum_{l=1}^{n}\vert f_l\vert^2\Bigr)^{\frac{1}{2}}}_{\infty, \Sigma_\theta},
\qquad f_1,\ldots,f_n\in H^{\infty}(\Sigma_\theta).
\end{equation}
Let $Q_{n,m}\colon H^{\infty}(\Sigma_\theta;\ell^2_n)\to \ell^\infty_{m}(\ell^2_n)$ be defined
by $Q_{n,m}(F)=\bigl(F(2^{-j})\bigr)_{1\leq j\leq m}$. Then $Q_n$ is onto and the vectorial 
form of Carleson's Interpolation Theorem (see \cite[VII.2]{Ga}) ensures that its lifting constant is bounded
by a universal constant not depending on either $m$ or $n$. Thus there is a constant $K\geq 1$
such that for any family $(\alpha_{lj})_{1\leq j\leq m,\, 1\leq l\leq n}$ of complex
numbers there exists $f_1,\ldots,f_n$ in $H^{\infty}(\Sigma_\theta)$ such that
$$
\Bignorm{\Bigm(\sum_{l=1}^{n}\vert f_l\vert^2\Bigr)^{\frac{1}{2}}}_{\infty, \Sigma_\theta}
\leq K
\sup_{1\leq j\leq m}\Bigl(\sum_{l=1}^{n}\vert\alpha_{lj}\vert^2\Bigr)^{\frac{1}{2}}
\qquad\hbox{and}\qquad 
\alpha_{lj}=f_l(2^{-j})
$$
for any $1\leq j\leq m,\, 1\leq l\leq n$. It therefore follows from (\ref{3Cex1})
that
\begin{equation}\label{3Cex2}
\Bignorm{\sum_{l=1}^n\sum_{j=1}^{m} 
\alpha_{lj}\, \varepsilon_l \otimes e_j}_{{\rm Rad}(c_0)}\,\leq
CK \,
\sup_{1\leq j\leq m}\Bigl(\sum_{l=1}^{n}\vert\alpha_{lj}\vert^2\Bigr)^{\frac{1}{2}},
\qquad \alpha_{lj}\in\Cdb.
\end{equation}
Let $n\geq 1$ be an integer. Since the unit ball of $\ell^2_n$ is compact,
there exists a finite family $(y_1,\ldots, y_m)$ of that unit ball such that
\begin{equation}\label{3Cex4}
\norm{y}_{\ell^2_n} \leq 2\sup\bigl\{\vert\langle y,y_j\rangle\vert\, :\, j=1,\ldots, m\bigr\}
\end{equation}
for any $y\in\ell^2_n$. Let $(h_1,\ldots,h_n)$ be an orthonormal basis of $\ell^2_n$, 
and let
$$
\alpha_{lj} =\langle h_l,y_j\rangle,\qquad 1\leq j\leq m,\, 1\leq l\leq n.
$$
Then the supremum in the right handside of (\ref{3Cex2}) is equal to 
$\sup_j\norm{y_j}$, hence is less than or equal to $1$. Consequently,
$$
\Bignorm{\sum_{l^=1}^n\sum_{j=1}^{m} 
\langle h_l,y_j\rangle\, \varepsilon_l \otimes e_j}_{{\rm Rad}(c_0)}\,\leq
CK.
$$
Now observe that for any $u\in\M$,
\begin{align*}
\Bignorm{\sum_{l=1}^n\sum_{j=1}^{m} 
\langle h_l,y_j\rangle\, \varepsilon_l(u)  e_j}_{c_0}
\, & = \,
\Bignorm{\sum_{j=1}^{m} \Bigl\langle 
\sum_{l=1}^n \varepsilon_l(u) \, h_l, y_j\Bigr\rangle\, e_j}_{c_0}
\\ &
=\,
\sup_{j}\Bigl\vert \Bigl\langle\sum_{l=1}^n \varepsilon_l(u) \, h_l, y_j\Bigr\rangle\Bigr\vert.
\end{align*}
Since $(h_1,\ldots, h_n)$ is an orthonormal basis, the norm of $\sum_{l^=1}^n \varepsilon_l(u) \, h_l$
in $\ell^2_n$ is equal to $n^{\frac{1}{2}}$. 
Aplyinh (\ref{3Cex4}), we deduce that
$$
n^{\frac{1}{2}}\leq 2\,\Bignorm{\sum_{l=1}^n\sum_{j=1}^{m} 
\langle h_l,y_j\rangle\, \varepsilon_l(u)  e_j}_{c_0}.
$$
Integrating over $\M$, this yields $n^{\frac{1}{2}}\leq 2 CK$ for any $n\geq 1$, a
contradiction.
\end{proof}

We now come back to the question addressed at the beginning of this section.
The following classical result will be used in the next proof:
If a Banach space $X$
does not contain $c_0$ (as an isomorphic subspace), 
then a series $\sum_k\varepsilon_k\otimes x_k\,$ converges in $L^2(\M;X)$
if and only if its partial sums are uniformly bounded (see \cite{Kwa}).

\begin{proposition}\label{4SFE} Assume that $X$ does not contain $c_0$.
Let $T\colon X\to X$ be a Ritt operator and assume that $T$ has a quadratic $H^{\infty}(B_\gamma)$ functional
calculus for some $\gamma\in \bigl(0,\frac{\pi}{2}\bigr)$. Then for any $m\geq 1$, 
$T$ satisfies a uniform estimate 
\begin{equation}\label{4SFE1}
\norm{x}_{T,m}\lesssim\norm{x},\qquad x\in X.
\end{equation}
\end{proposition}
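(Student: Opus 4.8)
The plan is to use the quadratic $H^\infty(B_\gamma)$ functional calculus with a cleverly chosen family of test functions $\varphi_k$, one for each value of $k$ appearing in the square function $\norm{x}_{T,m}$. Recall that
$$
\norm{x}_{T,m}\,=\,\Bignorm{\sum_{k=1}^{\infty} k^{m-\frac{1}{2}}\,\varepsilon_k\otimes T^{k-1}(I-T)^m (x)}_{{\rm Rad}(X)},
$$
so the natural choice is $\varphi_k(\lambda) = k^{m-\frac{1}{2}}\lambda^{k-1}(1-\lambda)^m$, which lies in $H^{\infty}_{0}(B_\gamma)$ and satisfies $\varphi_k(T) = k^{m-\frac{1}{2}}T^{k-1}(I-T)^m$. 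First I would fix an angle $\nu\in\bigl(\gamma,\frac{\pi}{2}\bigr)$ and apply Definition \ref{3Quad}(2) with the angle $\gamma$, after first noting that a quadratic $H^\infty(B_\gamma)$ functional calculus gives a quadratic $H^\infty(B_{\gamma'})$ one for any $\gamma' > \gamma$ (or simply invoking the hypothesis for whatever angle is convenient). The quadratic estimate then reduces \eqref{4SFE1} to the \emph{scalar} bound
$$
\sup_{\lambda\in B_\gamma}\ \Bigl(\sum_{k=1}^{N} k^{2m-1}\,\vert\lambda\vert^{2(k-1)}\,\vert 1-\lambda\vert^{2m}\Bigr)^{\frac12}\ \leq\ C
$$
uniformly in $N$, with $C$ independent of $N$.

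The heart of the argument is therefore this uniform estimate on the Stolz domain. For $\lambda\in B_\gamma$ one has the two geometric facts: $\vert\lambda\vert<1$ and, crucially for the Stolz geometry, $\vert 1-\lambda\vert\leq C_\gamma(1-\vert\lambda\vert)$ for a constant $C_\gamma$ depending only on $\gamma$ (this is exactly the tangency condition that makes $B_\gamma$ a Stolz region rather than touching the circle non-tangentially). Writing $\rho=\vert\lambda\vert$, the sum is dominated by
$$
C_\gamma^{2m}(1-\rho)^{2m}\sum_{k=1}^{\infty} k^{2m-1}\rho^{2(k-1)},
$$
and the elementary estimate $\sum_{k\geq 1}k^{2m-1}r^{k}\lesssim_m (1-r)^{-2m}$ for $0\le r<1$ (applied with $r=\rho^2$, using $1-\rho^2\geq 1-\rho$) makes the whole expression bounded by a constant depending only on $m$ and $\gamma$. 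This is the step I expect to be the main obstacle: not because it is deep, but because one must be careful that the Stolz-domain inequality $\vert 1-\lambda\vert\lesssim 1-\vert\lambda\vert$ is used with a constant that is genuinely uniform over $B_\gamma$, and that the summation estimate is truly independent of $N$.

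Finally, to pass from the uniform-in-$N$ bound on partial sums to the convergence of the full series $\sum_k \varepsilon_k\otimes k^{m-\frac12}T^{k-1}(I-T)^m(x)$ in ${\rm Rad}(X)$ — so that $\norm{x}_{T,m}$ is actually finite and not merely a formal supremum of finite pieces — I would invoke the hypothesis that $X$ does not contain $c_0$: by the cited result of Kwapie\'n, a Rademacher series in such a space converges in $L^2(\M;X)$ as soon as its partial sums are uniformly bounded, which is precisely what the quadratic calculus applied to the finite families $(\varphi_k)_{k\le N}$ delivers. Passing to the limit $N\to\infty$ in the resulting inequality then yields \eqref{4SFE1} with a constant depending only on $m$, $\gamma$, and the quadratic calculus constant $K$.
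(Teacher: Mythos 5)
Your proposal is correct and follows essentially the same route as the paper: apply the quadratic $H^{\infty}(B_\gamma)$ functional calculus to the functions $\varphi_k(\lambda)=k^{m-\frac{1}{2}}\lambda^{k-1}(1-\lambda)^m$, reduce to the uniform scalar bound on the Stolz domain, and use the Kwapie\'n criterion (no copy of $c_0$) to pass from uniformly bounded partial sums to convergence in ${\rm Rad}(X)$. The only difference is that you carry out the Stolz estimate $\vert 1-\lambda\vert\lesssim_\gamma 1-\vert\lambda\vert$ and the summation bound explicitly, where the paper simply refers to the proof of \cite[Thm. 3.3]{LMX1}.
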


\begin{proof}
According to the property discussed before the statement, it suffices to show
the existence of a constant $K>0$ such that for any $n\geq 1$ and any $x\in X$,
$$
\Bignorm{\sum_{l=1}^{n} l^{m-\frac{1}{2}}\varepsilon_l\otimes T^{l-1}(I-T)^m x}_{{\rm Rad}(X)}\leq
K\norm{x}.
$$
This is obtained by applying Definition \ref{3Quad}, (2), with 
$$
\varphi_l(z) = l^{m-\frac{1}{2}} z^{l-1}(1-z)^l,
$$
see the proof of \cite[Thm. 3.3]{LMX1} for the details.
\end{proof}

Let us finally summarize what we obtain by combining Theorem \ref{3QQ} 
and Proposition \ref{4SFE}. Recall that a Banach space with finite cotype
cannot contain $c_0$.

\begin{corollary}\label{4SFE2} Assume that $X$ has finite cotype. Let 
$T\colon X\to X$ be a Ritt operator with a bounded $H^{\infty}(B_\gamma)$ functional
calculus for some $\gamma\in \bigl(0,\frac{\pi}{2}\bigr)$. 
Then it satisfies a square function estimate (\ref{4SFE1}) for any $m\geq 1$.
\end{corollary}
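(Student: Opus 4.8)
The proof is an immediate composition of the two preceding results, so the plan is simply to chain them together. Let $T\colon X\to X$ be a Ritt operator with a bounded $H^\infty(B_\gamma)$ functional calculus for some $\gamma\in\bigl(0,\frac{\pi}{2}\bigr)$, where $X$ has finite cotype. First I would invoke Theorem \ref{3QQ}: since $X$ has finite cotype, $T$ admits a quadratic $H^\infty(B_\nu)$ functional calculus for every $\nu\in\bigl(\gamma,\frac{\pi}{2}\bigr)$; in particular for at least one such $\nu$.

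Next I would apply Proposition \ref{4SFE}. To do so I need to know that $X$ does not contain $c_0$ as an isomorphic subspace; this is exactly the point recalled in the sentence preceding Corollary \ref{4SFE2}, namely that a Banach space with finite cotype cannot contain $c_0$ (if it did, it would contain $c_0^n$ uniformly and thus fail to have any finite cotype). Hence Proposition \ref{4SFE} applies to $T$ with the quadratic $H^\infty(B_\nu)$ calculus produced in the previous step, and it yields, for every integer $m\geq 1$, the uniform square function estimate $\norm{x}_{T,m}\lesssim\norm{x}$ for $x\in X$, which is (\ref{4SFE1}).

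That is the entire argument: Theorem \ref{3QQ} upgrades a bounded $H^\infty$-calculus to a quadratic $H^\infty$-calculus (possibly at a slightly larger angle, which is harmless), and Proposition \ref{4SFE} converts the quadratic calculus into square function estimates. There is no real obstacle here — the two substantive inputs have already been proved — so the only thing to be careful about is bookkeeping with the angles (any $\nu\in\bigl(\gamma,\frac{\pi}{2}\bigr)$ works in Theorem \ref{3QQ}, and Proposition \ref{4SFE} is insensitive to which admissible $\gamma$ one uses) and the explicit recollection that finite cotype rules out $c_0$, which is what licenses the use of Proposition \ref{4SFE}.
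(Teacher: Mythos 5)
Your proposal is correct and is exactly the argument the paper intends: Theorem \ref{3QQ} gives the quadratic $H^\infty(B_\nu)$ calculus, finite cotype rules out containing $c_0$, and Proposition \ref{4SFE} then yields the estimates $\norm{x}_{T,m}\lesssim\norm{x}$ for all $m\geq 1$.
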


\begin{remark}
It follows from the proof of Proposition \ref{3Cex} that the spaces
$\ell^\infty_n$ do not satisfy (\ref{3DefQ}) uniformly, that is, there
is no common constant $C>0$ such that (\ref{3DefQ}) holds with
$X=\ell^\infty_n$ for any $n\geq 1$. Moreover a Banach space with no finite cotype
contains the $\ell^\infty_n$'s uniformly as isomorhic subspaces (see e.g. \cite[Thm. 14.1]{DJT})
and hence cannot satisfy (\ref{3DefQ}).
Together with Lemma \ref{3Kaiser}, this observation shows that
a Banach space satisfies an estimate (\ref{3DefQ}) if and only if it has finite 
cotype.

In an early version of this paper, Theorem \ref{3Q} was stated under the assumption
that $X$ satisfies an estimate (\ref{3DefQ}). I had overlooked \cite[Cor. 3.4]{KaiWei} and
realized only recently that (\ref{3DefQ}) is the same as `finite cotype'. This 
led to the present neater presentation of Section 6.

Bernhard Haak and Markus Haase have informed me that they obtained a variant of Theorem \ref{3Q}
in a work in progress (see \cite{HH}). This work is independent of mine, and was 
undertaken several months ago. 
\end{remark}

\medskip
\section{From square functions to $H^\infty$ functional calculus}
This section is devoted to the issue of showing that a Ritt 
operator has a bounded $H^\infty$-functional calculus with respect to a Stolz domain
$B_\gamma$, provided that it satisfies suitable square function estimates.
We consider an arbitrary Banach space $X$ and first establish a 
general result, namely Theorem \ref{6FromSFE} below. Then we consider 
special cases in the last part of the section.

\begin{lemma}\label{6Nevanlinna}
Let $0<\alpha<\gamma< \frac{\pi}{2}$ and 
let $T\colon X\to X$ be a Ritt operator of type $\alpha$
(resp. an $R$-Ritt operator of $R$-type $\alpha$). There exists a 
constant $C>0$ such that for any $\varphi\in
H^{\infty}_0(B_\gamma)$, we have
$$
k\bignorm{\varphi(T)\bigl(T^{k} -T^{k-1}\bigr)}\,\leq\, C\norm{\varphi}_{\infty,
B_\gamma},\qquad k\geq 1
$$
(resp. the set $\{k\varphi(T)(T^{k} -T^{k-1})\, :\, k\geq 1\}$ is $R$-bounded
and
$$
\R\Bigl(\bigl\{k\varphi(T)\bigl(T^{k} -T^{k-1}\bigr)\, :\, k\geq 1\bigr\}\Bigr)\,
\leq\, C\norm{\varphi}_{\infty,
B_\gamma}\,\bigl).
$$
\end{lemma}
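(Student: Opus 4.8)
The plan is to represent $k\,\varphi(T)(T^{k}-T^{k-1})$ as a Cauchy integral over $\partial B_\beta$, for a fixed $\beta\in(\alpha,\gamma)$, whose integrand is (up to a scalar) the single genuinely controlled operator family $(\lambda-1)R(\lambda,T)$. Starting from the defining formula $(\ref{2Cauchy})$ for $\varphi(T)$ and bringing the bounded operator $T^{k}-T^{k-1}$ inside the absolutely convergent integral, I would insert the elementary resolvent identity
$$
R(\lambda,T)\bigl(T^{k}-T^{k-1}\bigr)\,=\,\lambda^{k-1}(\lambda-1)R(\lambda,T)\,-\,P_{k}(\lambda,T),
$$
where $P_{k}(\lambda,\cdot)$ is a polynomial in $\lambda$ of degree $\le k-1$ with coefficients in $B(X)$; this follows by iterating $(\lambda-T)R(\lambda,T)=I$, i.e.\ from the telescoping relation $R(\lambda,T)T^{k}=\lambda^{k}R(\lambda,T)-\sum_{j=0}^{k-1}\lambda^{k-1-j}T^{j}$. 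Since $\varphi\in H^{\infty}_{0}(B_\gamma)$ satisfies an estimate $\vert\varphi(\lambda)\vert\le c\vert 1-\lambda\vert^{s}$, it extends continuously to $\overline{B_\beta}$ with $\varphi(1)=0$, so $\lambda\mapsto\varphi(\lambda)P_{k}(\lambda,T)$ is $B(X)$-valued, holomorphic on $B_\beta$ and continuous and bounded on $\overline{B_\beta}$; Cauchy's theorem (pairing with $x\in X$, $x^{*}\in X^{*}$, deforming $\partial B_\beta$ to $\partial(rB_\beta)$ and letting $r\to 1^{-}$) makes its integral vanish. Hence
$$
k\,\varphi(T)\bigl(T^{k}-T^{k-1}\bigr)\,=\,\frac{1}{2\pi i}\int_{\partial B_\beta}\bigl(k\lambda^{k-1}\varphi(\lambda)\bigr)\,(\lambda-1)R(\lambda,T)\,d\lambda .
$$
The crucial point of this reorganization is that the factor $(\lambda-1)$ now stands against the resolvent, which is exactly what will make the bound depend only on $\norm{\varphi}_{\infty,B_\gamma}$ and not on the constants $c,s$ coming from membership in $H^{\infty}_{0}$.

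For the norm statement, recall (as noted right after $(\ref{2Cauchy})$) that $\{(\lambda-1)R(\lambda,T):\lambda\in\partial B_\beta\setminus\{1\}\}$ is bounded, by some constant $M$ depending only on $T$. Estimating $\vert\varphi(\lambda)\vert\le\norm{\varphi}_{\infty,B_\gamma}$ under the integral reduces everything to the purely geometric fact
$$
L\,:=\,\sup_{k\ge 1}\;k\!\int_{\partial B_\beta}\vert\lambda\vert^{k-1}\,\vert d\lambda\vert\;<\;\infty ,
$$
after which $C=ML/2\pi$ works. To see $L<\infty$: on the circular part of $\partial B_\beta$ one has $\vert\lambda\vert=\sin\beta<1$, an exponentially small contribution; on each of the two straight parts, parametrized by $\lambda(t)=1-te^{\pm i\beta}$ with $0\le t\le\cos\beta$, one computes $\vert\lambda(t)\vert^{2}=1-2t\cos\beta+t^{2}\le 1-t\cos\beta\le e^{-t\cos\beta}$, so $\int_{0}^{\cos\beta}\vert\lambda(t)\vert^{k-1}\,dt\le \frac{2}{(k-1)\cos\beta}$ for $k\ge 2$, giving the $O(1/k)$ decay.

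For the $R$-bounded statement, the same identity applies, with ``bounded'' upgraded to ``$R$-bounded''. By Lemma \ref{5Blunck}(iii), applied with an auxiliary angle $\beta''\in(\alpha,\beta)$ and using that $\partial B_\beta\setminus\{1\}\subset\Cdb\setminus\overline{B_{\beta''}}$, the set $E=\{(\lambda-1)R(\lambda,T):\lambda\in\partial B_\beta\setminus\{1\}\}$ is $R$-bounded. I would then read the last displayed integral as $\int h_{k}(\lambda)F(\lambda)\,d\lambda$ with $F(\lambda)=(\lambda-1)R(\lambda,T)$ a continuous $E$-valued function on the open arc $\partial B_\beta\setminus\{1\}$ and $h_{k}(\lambda)=\frac{1}{2\pi i}k\lambda^{k-1}\varphi(\lambda)$, for which $\int_{\partial B_\beta}\vert h_{k}(\lambda)\vert\,\vert d\lambda\vert\le \frac{L}{2\pi}\norm{\varphi}_{\infty,B_\gamma}$ uniformly in $k$ by the previous paragraph. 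The convexity result Lemma \ref{5Convexity} then gives that $\{k\varphi(T)(T^{k}-T^{k-1}):k\ge1\}$ is $R$-bounded with $R$-bound $\le 2\cdot\frac{L}{2\pi}\norm{\varphi}_{\infty,B_\gamma}\,\R(E)$, so $C=\frac{L\,\R(E)}{\pi}$ works.

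The only delicate point is the vertex $1$ of $\partial B_\beta$, where $R(\lambda,T)$ blows up: this is why one must use $\varphi(1)=0$ in the Cauchy step and why one works on the open arc $\partial B_\beta\setminus\{1\}$ — which is all that Lemma \ref{5Convexity} requires. If one prefers to avoid any limiting argument altogether, one may from the start replace the corner at $1$ by a small circular sub-arc lying in $B_\gamma$; by Cauchy's theorem this changes neither $\varphi(T)$ nor any of the estimates, and then all integrands are continuous on a compact curve. I do not expect any further genuine difficulty: apart from the bookkeeping around $1$, the scheme is essentially the Ritt analogue of the standard sectorial estimate for $\norm{\psi(A)\,tAe^{-tA}}$.
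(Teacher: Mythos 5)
Your proof is correct and follows essentially the same route as the paper: the same representation of $k\,\varphi(T)\bigl(T^{k}-T^{k-1}\bigr)$ as a contour integral over $\partial B_\beta$, $\beta\in(\alpha,\gamma)$, with the factor $(\lambda-1)R(\lambda,T)$ isolated, then the norm bound (resp. the $R$-bound via Lemma \ref{5Convexity}) reduced to the uniform estimate $\sup_{k\ge 1} k\int_{\partial B_\beta}\vert\lambda\vert^{k-1}\,\vert d\lambda\vert<\infty$. The only differences are cosmetic: the paper obtains the integral representation directly from the multiplicativity of the $H^{\infty}_{0}(B_\gamma)$-calculus applied to $\varphi(\lambda)\lambda^{k-1}(\lambda-1)$ together with (\ref{2Cauchy}), instead of your resolvent-telescoping plus Cauchy's theorem computation, and it quotes \cite[Lem. 2.1]{V} for the geometric integral estimate that you verify by hand.
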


\begin{proof}
We will prove this result in the `$R$-Ritt case' only, the `Ritt case'
being similar and simpler. We fix a real number
$\beta\in(\alpha,\gamma)$. Recall Lemma \ref{5Blunck} and let 
$$
C_1=\R\Bigl(\bigl\{(\lambda-1)R(\lambda,T)\, :\, \lambda\in\partial B_\beta\setminus\{1\}
\bigr\}\Bigr).
$$
For any function $\varphi\in
H^{\infty}_0(B_\gamma)$ and any integer $k\geq 1$, we have
$$
k\varphi(T)\bigl(T^{k} -T^{k-1}\bigr)\,=\,\frac{1}{2\pi i}\,\int_{\partial B_\beta}
k\varphi(\lambda) \lambda^{k-1}\bigl((\lambda-1)R(\lambda,T)\bigr)\, d\lambda\,.
$$
Hence by Lemma \ref{5Convexity}, we have
\begin{align*}
\R\Bigl(\bigl\{k\varphi(T)\bigl(T^{k} -T^{k-1}\bigr)\, :\, k\geq 1\bigr\}\Bigr)\,
&\leq\, \frac{C_1}{\pi}\,\sup_{k\geq 1}\Bigl\{
k \int_{\partial B_\beta}
\vert \varphi(\lambda)\vert \,\vert\lambda\vert^{k-1}\,\vert d\lambda\vert
\Bigr\}\\
&\leq\, \frac{C_1}{\pi}\,\norm{\varphi}_{\infty,
B_\gamma}\,\sup_{k\geq 1}\Bigl\{
k \int_{\partial B_\beta}
\vert\lambda\vert^{k-1}\,\vert d\lambda\vert
\Bigr\}.
\end{align*}
The finiteness of the latter supremum is well-known, see e.g. \cite[Lem. 2.1]{V} and its proof.
The result follows at once.
\end{proof}

\begin{lemma}\label{6Decomp}
Let $T\colon X\to X$ be a Ritt operator. For any $x\in\overline{{\rm Ran}(I-T)}$, we have
$$
\sum_{k=1}^{\infty} k(k+1)T^{k-1}(I-T)^3  x\, =\, 2x.
$$
\end{lemma}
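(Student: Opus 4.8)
The plan is to compute the partial sums $S_N = \sum_{k=1}^{N} k(k+1)\,T^{k-1}(I-T)^3$ explicitly by summation by parts, and then let $N\to\infty$ on $\overline{{\rm Ran}(I-T)}$. First I would record the standard decay estimates for a Ritt operator: $C_0:=\sup_{n\geq 0}\norm{T^n}<\infty$ and, for each integer $m\geq 1$, $C_m:=\sup_{n\geq 1} n^m\norm{T^{n-1}(I-T)^m}<\infty$. The case $m=1$ is the definition of a Ritt operator, and the general case follows in the usual way from the Cauchy representation of $T^{n-1}(I-T)^m$ as in (\ref{2Cauchy}), the boundedness of $(\lambda-1)R(\lambda,T)$ on $\partial B_\beta$ (Lemma \ref{2Blunck}), and the elementary estimate $n^m\int_{\partial B_\beta}\vert\lambda\vert^{n-1}\vert 1-\lambda\vert^{m-1}\,\vert d\lambda\vert = O(1)$ already invoked in the proof of Lemma \ref{6Nevanlinna}. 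Only the cases $m=1,2,3$ will actually be used.

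The algebraic core is a double summation by parts. Using $T^{k-1}(I-T)^3=(I-T)^2T^{k-1}-(I-T)^2T^{k}$, telescoping the coefficients $k(k+1)$ (whose first differences are $2k$), then using $(I-T)^2T^{k-1}=(I-T)T^{k-1}-(I-T)T^{k}$ together with the telescoping identity $\sum_{k=1}^{N}(I-T)T^{k-1}=I-T^N$, one arrives at
\[
S_N \,=\, 2I - 2T^N - 2N(I-T)T^N - N(N+1)(I-T)^2T^N .
\]
(This can be sanity-checked directly for $N=1,2$.) This step is routine, but it is where the bookkeeping must be done carefully; I expect it to be the main obstacle in the argument.

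It then remains to show that $T^N x$, $N(I-T)T^N x$ and $N(N+1)(I-T)^2T^N x$ all tend to $0$ as $N\to\infty$ for every $x\in\overline{{\rm Ran}(I-T)}$. I would combine a uniform operator bound with convergence on the dense subspace ${\rm Ran}(I-T)$. The uniform bounds are immediate: $\norm{T^N}\leq C_0$; $\norm{N(I-T)T^N}=\norm{T\cdot NT^{N-1}(I-T)}\leq C_0C_1$; and, since $N(N+1)\leq 2N^2$, $\norm{N(N+1)(I-T)^2T^N}\leq 2C_0C_2$. On ${\rm Ran}(I-T)$, for $x=(I-T)y$ one has $T^Nx=(T^N-T^{N+1})y$ with $\norm{T^N-T^{N+1}}\leq C_1/(N+1)$; $N(I-T)T^Nx=NT^N(I-T)^2y$ with norm $\leq C_0C_2\norm{y}/N$; and $N(N+1)(I-T)^2T^Nx=N(N+1)T^N(I-T)^3y$ with norm $\leq 2C_0C_3\norm{y}/N$; all three go to $0$. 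A standard $\varepsilon/3$ approximation argument then extends the convergence to $0$ to every $x\in\overline{{\rm Ran}(I-T)}$.

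Combining, $S_Nx = 2x - 2T^Nx - 2N(I-T)T^Nx - N(N+1)(I-T)^2T^Nx \to 2x$ for $x\in\overline{{\rm Ran}(I-T)}$, which is exactly the assertion; in particular the series converges for such $x$. I would also note that the hypothesis $x\in\overline{{\rm Ran}(I-T)}$ is essential, since every term of the series vanishes on ${\rm Ker}(I-T)$.
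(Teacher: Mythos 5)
Your proposal is correct and follows essentially the same route as the paper: the same partial-sum identity $\sum_{k=1}^{N}k(k+1)T^{k-1}(I-T)^3=2I-2T^N-2NT^N(I-T)-N(N+1)T^N(I-T)^2$ obtained by summation by parts, followed by the boundedness of $N^m\norm{T^{N-1}(I-T)^m}$ for $m\leq 3$ (which the paper quotes from the literature rather than rederiving) and a density argument to kill the three remainder terms on $\overline{{\rm Ran}(I-T)}$. The only differences are presentational: you spell out the uniform bounds and the $\varepsilon/3$ step that the paper leaves implicit.
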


\begin{proof}
Let $N\geq 1$ be an integer. First, we have
\begin{align*}
\sum_{k=1}^{N} k(k+1)T^{k-1}(I-T)\, & =\,\sum_{k=1}^{N}k(k+1)T^{k-1}\,-\,\sum_{k=2}^{N+1}(k-1)k T^{k-1}\\ 
& =\, 
2\sum_{k=1}^{N}kT^{k-1}\, -N(N+1)T^N.
\end{align*}
Then we compute
$$
\sum_{k=1}^{N}kT^{k-1}(I-T)\, =\, \sum_{k=1}^{N}kT^{k-1} \,-\,\sum_{k=2}^{N+1}(k-1)T^{k-1}\, =\,
\sum_{k=1}^{N} T^{k-1} \, -NT^N, 
$$
and we note that
$$
\sum_{k=1}^{N}T^{k-1}(I-T)\, = I-T^N.
$$
Putting these identities together, we obtain that
\begin{equation}\label{6Identity}
\sum_{k=1}^{N} k(k+1)T^{k-1}(I-T)^3\,=\, 2I - 2 T^N -2NT^N(I-T) - N(N+1)T^N(I-T)^2.
\end{equation}
Since $T$ is a Ritt operator, the four sequences
$$
\S_0 =(T^N)_{N\geq 1},\quad \S_1=\bigl(NT^N(I-T)\bigr)_{N\geq 1}, \quad
\S_2=\bigl(N^2T^N(I-T)^2\bigr)_{N\geq 1}
$$
and
$$
\S_3=\bigl(N^3T^N(I-T)^3\bigr)_{N\geq 1}
$$
are bounded (see \cite[Lem. 2.1]{V}).

If $x=(I-T)z$ is an element of ${\rm Ran}(I-T)$, the boundedness of $\S_3$ implies that
$$
N(N+1)T^N(I-T)^2x\,=\,\frac{N+1}{N^2}\,N^3T^N(I-T)^3z\,\longrightarrow\, 0
$$
when $N\to \infty$. Then the boundedness of the sequence $\S_2$ 
implies that we actually have
$$
\lim_N N(N+1)T^N(I-T)^2x\, =\,0
$$
for any 
$x$ in the closure $\overline{{\rm Ran}(I-T)}$. Likewise, using $\S_2,\S_1$ and $\S_0$, we have
$$
\lim_N NT^N(I-T) x\,=\,0\qquad\hbox{and}\qquad \lim_N T^Nx\,=\,0
$$
for any 
$x\in\overline{{\rm Ran}(I-T)}$. Thus applying (\ref{6Identity}) yields the result.
\end{proof}

\begin{theorem}\label{6FromSFE}
Let $T\colon X\to X$ be an $R$-Ritt operator of $R$-type $\alpha\in
\bigl(0,\frac{\pi}{2}\bigr)$. If
$T$ and $T^*$
both  satisfy uniform estimates
$$
\norm{x}_{T,1}\,\lesssim\,\norm{x}\qquad\hbox{and}\qquad
\norm{y}_{T^*,1}\,\lesssim\,\norm{y}
$$
for $x\in X$ and $y\in X^*$, then $T$ admits a 
bounded $H^{\infty}(B_\gamma)$ functional calculus 
for any $\gamma\in\bigl(\alpha,\frac{\pi}{2}\bigr)$.
\end{theorem}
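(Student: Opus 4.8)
The plan is to estimate $\norm{\varphi(T)x}$ by duality, using the discrete reproducing formula of Lemma \ref{6Decomp}, the two square function hypotheses, and the $R$-boundedness furnished by Lemma \ref{6Nevanlinna}. Fix $\gamma\in\bigl(\alpha,\frac{\pi}{2}\bigr)$ and $\varphi\in H^{\infty}_{0}(B_\gamma)$; the aim is an estimate $\norm{\varphi(T)}\leq K\norm{\varphi}_{\infty,B_\gamma}$ with $K$ depending only on $\alpha$, $\gamma$, the ($R$-)Ritt constants of $T$, and the square function constants. Since $\varphi(1)=0$, one sees from the Cauchy integral (\ref{2Cauchy}) that $\varphi(T)$ vanishes on ${\rm Ker}(I-T)$, so by the usual splitting it is enough to bound $\norm{\varphi(T)x}$ for $x$ in $\overline{{\rm Ran}(I-T)}$.

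Fix such an $x$, a functional $y\in X^*$ with $\norm{y}\leq 1$, and write $A=I-T$. By Lemma \ref{6Decomp}, $\varphi(T)x=\frac12\sum_{k\geq 1}k(k+1)\varphi(T)T^{k-1}A^3x$, with convergence in $X$; it therefore suffices to bound the partial sums $\bigl\langle\frac12\sum_{k=1}^{N}k(k+1)\varphi(T)T^{k-1}A^3x,\,y\bigr\rangle$ uniformly in $N$. For $k\geq 4$, split the exponent $k-1=a_k+b_k+c_k$ with $a_k=c_k=\lfloor(k-1)/3\rfloor$ and $b_k=k-1-2a_k$, so that $a_k,b_k,c_k$ are all comparable to $k$ and $\geq 1$; the finitely many terms with $k\leq 3$ are harmless, since $\varphi(T)T^{k-1}A^3=T^{k-1}(\varphi\cdot(1-z)^3)(T)$ and $\norm{(\varphi\cdot(1-z)^3)(T)}\lesssim\norm{\varphi}_{\infty,B_\gamma}$ directly from (\ref{2Cauchy}). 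Since all the operators involved commute, for $k\geq 4$ one has
\begin{equation*}
k(k+1)\bigl\langle\varphi(T)T^{k-1}A^3x,\,y\bigr\rangle=\bigl\langle S_k\bigl(a_k^{1/2}T^{a_k}Ax\bigr),\,c_k^{1/2}(T^*)^{c_k}A^*y\bigr\rangle,\qquad S_k:=\frac{k(k+1)}{(a_kc_k)^{1/2}}\,\varphi(T)T^{b_k}A.
\end{equation*}
Here the scalar $k(k+1)(a_kc_k)^{-1/2}$ is uniformly bounded, while Lemma \ref{6Nevanlinna}, applied to the family $\{j\varphi(T)(T^{j}-T^{j-1}):j\geq 1\}$ together with the comparability of $b_k$ with $k$, shows that $\{S_k:k\geq 4\}$ is $R$-bounded with $\R(\{S_k\})\lesssim\norm{\varphi}_{\infty,B_\gamma}$.

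Now sum over $k$ and use the natural pairing $\bigl\langle\sum_k\varepsilon_k\otimes p_k,\sum_k\varepsilon_k\otimes q_k\bigr\rangle=\sum_k\langle p_k,q_k\rangle$ between ${\rm Rad}(X)$ and ${\rm Rad}(X^*)$, followed by the Cauchy--Schwarz inequality in $L^2(\M)$: the $N$-th partial sum of $2\langle\varphi(T)x,y\rangle$ is bounded by
\begin{equation*}
C\norm{\varphi}_{\infty,B_\gamma}\norm{x}+\Bignorm{\sum_{k=4}^{N}\varepsilon_k\otimes S_k\bigl(a_k^{1/2}T^{a_k}Ax\bigr)}_{{\rm Rad}(X)}\,\Bignorm{\sum_{k=4}^{N}\varepsilon_k\otimes c_k^{1/2}(T^*)^{c_k}A^*y}_{{\rm Rad}(X^*)}.
\end{equation*}
By the $R$-boundedness of $(S_k)$ the first factor is $\lesssim\norm{\varphi}_{\infty,B_\gamma}\bignorm{\sum_k\varepsilon_k\otimes a_k^{1/2}T^{a_k}Ax}_{{\rm Rad}(X)}$; since $k\mapsto a_k$ is three-to-one, splitting $\{4,\dots,N\}$ into three subsets on which it is injective, reindexing, and invoking Kahane's contraction principle bounds this by $\lesssim\norm{\varphi}_{\infty,B_\gamma}\norm{x}_{T,1}\lesssim\norm{\varphi}_{\infty,B_\gamma}\norm{x}$. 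In the same way the second factor is $\lesssim\norm{y}_{T^*,1}\lesssim\norm{y}\leq 1$. Letting $N\to\infty$ yields $\norm{\varphi(T)x}\lesssim\norm{\varphi}_{\infty,B_\gamma}\norm{x}$ for $x\in\overline{{\rm Ran}(I-T)}$, hence $\norm{\varphi(T)}\leq K\norm{\varphi}_{\infty,B_\gamma}$, which is the desired bounded $H^\infty(B_\gamma)$ functional calculus. The main obstacle is the three-way factorization of $k(k+1)T^{k-1}A^3$: arranging that one factor feeds the square function of $T$, one that of $T^*$, and the middle one the $R$-bounded family of Lemma \ref{6Nevanlinna}, while keeping the combinatorial multiplicities under control. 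Once this bookkeeping is in place the estimate is essentially forced.
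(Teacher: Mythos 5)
Your central computation is correct and is essentially the paper's own argument in a slightly different dress: splitting the exponent $k-1=a_k+b_k+c_k$ plays exactly the role of the paper's device of applying Lemma \ref{6Decomp} to $T^3$ and writing $(I-T^3)^3=2\psi(T)(I-T)^3$ with $\psi(T)=(I+T+T^2)^3/2$, so that one factor of the form $T^{\approx k}(I-T)$ feeds $\norm{\cdot}_{T,1}$, one (moved to the dual side) feeds $\norm{\cdot}_{T^*,1}$, and the middle one, multiplied by $\varphi(T)$, is controlled by the $R$-bounded family of Lemma \ref{6Nevanlinna}. The Rademacher duality pairing followed by Cauchy--Schwarz, the absorption of the bounded scalars $k(k+1)(a_kc_k)^{-1/2}(b_k+1)^{-1}$, the three-to-one reindexing with the contraction principle, and the treatment of the finitely many terms $k\leq 3$ are all fine.

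There is, however, a genuine gap at the very first step: you reduce to $x\in\overline{{\rm Ran}(I-T)}$ ``by the usual splitting'', i.e.\ $X={\rm Ker}(I-T)\oplus\overline{{\rm Ran}(I-T)}$. The theorem is stated on an arbitrary Banach space, and this mean ergodic decomposition is not available there; it is precisely the point where the $L^p$ argument of Section 5 invokes reflexivity (see (\ref{5MET})), and it can fail for Ritt operators in general: multiplication by $t$ on $C([0,1])$ is a Ritt operator with ${\rm Ker}(I-T)=\{0\}$ while $\overline{{\rm Ran}(I-T)}\subset\{f\,:\,f(1)=0\}$ is a proper closed subspace. Knowing that $\varphi(T)$ vanishes on the kernel and satisfies the desired bound on $\overline{{\rm Ran}(I-T)}$ does not bound $\norm{\varphi(T)}$ on $X$ unless these subspaces span $X$ with a bounded projection, and you give no argument that your hypotheses provide this. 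The repair is exactly what the paper does: do not expand $x$, expand $\varphi(T)x$. For a polynomial $\varphi$ with $\varphi(1)=0$ one has $\varphi(T)x\in{\rm Ran}(I-T)={\rm Ran}(I-T^3)$ for \emph{every} $x\in X$, so Lemma \ref{6Decomp} applies directly to $\varphi(T)x$, your three-way factorization and estimates then go through verbatim, and one concludes for all of $H^\infty_0(B_\gamma)$ via Proposition \ref{2Approx} (passing first from polynomials vanishing at $1$ to all polynomials). With this modification your proof coincides with the paper's.
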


\begin{proof}
We fix $\gamma$ in $\bigl(\alpha,\frac{\pi}{2}\bigr)$. 
Let $\omega=e^{\frac{2i\pi}{3}}$, then 
the two operators $\omega I-T$ and $\bar{\omega} I-T$ are invertible and 
$I-T^3=(I-T)(\omega I-T)(\bar{\omega} I -T)$. Hence
$$
{\rm Ran}(I-T)\,=\,{\rm Ran}(I-T^3).
$$
Note moreover that $T^3$ is a Ritt operator.

Let $\varphi\in \P$ such that $\varphi(1)=0$ and consider $x\in X$.
Then $\varphi(T)x\in{\rm Ran}(I-T)$ hence applying
Lemma \ref{6Decomp} to $T^3$ and the above observations, we obtain
$$
\sum_{k=1}^{\infty} k(k+1)T^{3(k-1)}(I-T^3)^3\varphi(T)x\,=\, 2\varphi(T)x.
$$
We set $\psi(T)=(I+T+T^2)^3/2$ for convenience, so that $2\psi(T)(I-T)^3=(I-T^3)^3$. 
Then for any $y\in X^*$, we derive
\begin{align*}
\bigl\langle\varphi(T)x,y\bigr\rangle\, 
& = 
\,\sum_{k=1}^{\infty} \bigl\langle k(k+1)\psi(T)\varphi(T) T^{3(k-1)}(I-T)^3 x,y\bigr\rangle\\
& = 
\,\sum_{k=1}^{\infty} \bigl\langle \bigl[(k+1)\varphi(T) T^{k-1}(I-T)\bigr]
k^{\frac{1}{2}}T^{k-1}(I-T) x, k^{\frac{1}{2}}T^{*(k-1)}(I-T^*)\psi(T^*) y\bigr\rangle\,.
\end{align*}

Note that for any finite families $(x_k)_{k\geq 1}$ in $X$ and $(y_k)_{k\geq 1}$ in $X^*$, we have
$$
\sum_k \langle x_k,y_k\rangle\,=\,\int_{\footnotesize{\M}}\Bigl\langle\,
\sum_k\varepsilon_k(u) x_k,\sum_k\varepsilon_k(u) y_k\Bigr\rangle\, d\Pdb(u),
$$
and hence
$$
\Bigl\vert\sum_k \langle x_k,y_k\rangle\,\Bigr\vert
\,\leq\,\Bignorm{\sum_k \varepsilon_k\otimes x_k}_{{\rm Rad}(X)}\,
\Bignorm{\sum_k \varepsilon_k\otimes y_k}_{{\rm Rad}(X^*)}
$$
by Cauchy-Schwarz.

Thus for any integer $N\geq 1$, we have 
\begin{align*}
\biggl\vert  \sum_{k=1}^{N} & \bigl\langle \bigl[(k+1)\varphi(T) 
T^{k-1}(I-T)\bigr]
k^{\frac{1}{2}}T^{k-1}(I-T) x, k^{\frac{1}{2}}T^{*(k-1)}(I-T^*)\psi(T^*) y\bigr\rangle\,\biggr\vert\\
& \leq\,\Bignorm{\sum_{k=1}^{N}\varepsilon_k\otimes \bigl[(k+1)\varphi(T) 
T^{k-1}(I-T)\bigr] 
k^{\frac{1}{2}}T^{k-1}(I-T) x}_{{\rm Rad}(X)}\\ & \ \,\quad \qquad\qquad\qquad\times 
\Bignorm{\sum_{k=1}^{N}\varepsilon_k\otimes 
k^{\frac{1}{2}}T^{*(k-1)}(I-T^*)\psi(T^*) y}_{{\rm Rad}(X^*)}\\
& \leq\,\R\Bigl(\bigl\{(k+1)\varphi(T)T^{k-1}(I-T)\, :\, k\geq 1\bigr\}\Bigr)
\norm{\psi(T)}\norm{x}_{T,1}\norm{y}_{T^*,1}\\
& \lesssim\,\norm{\varphi}_{\infty, B_\gamma}\norm{x}_{T,1}\norm{\psi(T^*) y}_{T^*,1}
\end{align*}
by Lemma \ref{6Nevanlinna}.
Applying our assumptions, we deduce that 
$$
\bigl\vert\langle\varphi(T)x,y\bigr\rangle\bigr\vert\, \lesssim 
\norm{\varphi}_{\infty,B_\gamma} \,\norm{x}\norm{y}.
$$
Since $x$ and $y$ are arbitrary, this implies an estimate $\norm{\varphi(T)}\lesssim 
\norm{\varphi}_{\infty, B_\gamma}$ for polynomials vanishing at $1$. Writing any polynomial
as $\varphi = \varphi(1) +(\varphi-\varphi(1))$, we immediatly derive a similar 
estimate for all polynomials. 
This yields the result by Proposition \ref{2Approx}.
\end{proof}

Theorem 
\ref{6FromSFE} fails if we remove one of the two 
square function estimates in the assumption. This will 
follow from Proposition \ref{7One}.

We finally consider a special case and combinations with results
from the previous sections. Following \cite{KaW1}, we say that 
a Banach space $X$ has property $(\Delta)$ if the triangular projection
is bounded on ${\rm Rad}(
{\rm Rad}(X))$, that is, there exists 
a constant $C>0$ such that for finite doubly indexed families $(x_{kl})_{k,l\geq 1}$ of $X$,
$$
\Bignorm{\sum_{k\geq 1}\sum_{l\geq k}\varepsilon_k\otimes\varepsilon_l\otimes x_{kl}}_{{\rm Rad}(
{\rm Rad}(X))}\,\leq C\,
\Bignorm{\sum_{k\geq 1}\sum_{l\geq 1}\varepsilon_k\otimes\varepsilon_l\otimes x_{kl}}_{{\rm Rad}(
{\rm Rad}(X))}.
$$
That condition is clearly
weaker than $(\alpha)$. Furthermore
any UMD Banach space has property $(\Delta)$, by \cite[Prop. 3.2]{KaW1}. Thus any noncommutative $L^p$-space
with $1<p<\infty$ has property $(\Delta)$. On the other hand, property $(\Delta)$ does not hold uniformly 
on the spaces $\ell^\infty_n$, hence any Banach space with property $(\Delta)$ has finite cotype.

It follows from \cite[Thm. 5.3]{KaW1} that if $A$
is a sectorial operator on $X$ with property $(\Delta)$ and $A$ admits
a bounded $H^{\infty}(\Sigma_\theta)$ for some $\theta<\frac{\pi}{2}$, then $A$
is $R$-sectorial of $R$-type $<\frac{\pi}{2}$. Combining with (the easy implication
of) Proposition \ref{4TA} and Lemma \ref{5Blunck}, we deduce the 
following.

\begin{proposition}\label{6Delta}
Let $T$ be a Ritt operator on $X$ with property $(\Delta)$. If $T$ admits
a bounded $H^{\infty}(B_\gamma)$ for some $\gamma<\frac{\pi}{2}$, then $T$
is $R$-Ritt.
\end{proposition}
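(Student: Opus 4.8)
The plan is to transfer the problem to the sectorial setting and then invoke the Kalton--Weis automatic $R$-sectoriality theorem. First I would put $A = I - T$; by Lemma~\ref{2Blunck} this is a sectorial operator of type $< \frac{\pi}{2}$. Since $T$ has a bounded $H^\infty(B_\gamma)$ functional calculus for some $\gamma \in \bigl(0,\frac{\pi}{2}\bigr)$, the easy implication (i)\,$\Rightarrow$\,(ii) of Proposition~\ref{4TA} applies. Its proof, via the change of variable $f(z) = \varphi(1-z)$ together with Cauchy's theorem, preserves the angle, so it yields that $A$ admits a bounded $H^\infty(\Sigma_\gamma)$ functional calculus on the sector of half-angle $\gamma < \frac{\pi}{2}$.

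Next I would apply \cite[Thm.~5.3]{KaW1}: on a Banach space with property $(\Delta)$, a sectorial operator with a bounded $H^\infty(\Sigma_\theta)$ functional calculus for some $\theta < \frac{\pi}{2}$ is automatically $R$-sectorial of $R$-type $< \frac{\pi}{2}$. Here the hypothesis that $X$ has property $(\Delta)$ is precisely what is used; it is weaker than property $(\alpha)$ and holds on every UMD space, in particular on every noncommutative $L^p$-space with $1 < p < \infty$. Applying it to $A$ shows that $A = I - T$ is $R$-sectorial of $R$-type $< \frac{\pi}{2}$.

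Finally I would invoke the equivalence in Lemma~\ref{5Blunck}: since $A = I - T$ satisfies condition (ii) there, condition (i) holds as well, i.e. $T$ is $R$-Ritt, which is the desired conclusion. Since the argument is just a concatenation of Proposition~\ref{4TA}, \cite[Thm.~5.3]{KaW1} and Lemma~\ref{5Blunck}, there is no genuine technical obstacle. The only point requiring care is the bookkeeping of angles: one must check that the $H^\infty$-calculus produced for $A$ lives on a sector $\Sigma_\gamma$ with $\gamma < \frac{\pi}{2}$, rather than merely on some sector of half-angle $< \pi$, since \cite[Thm.~5.3]{KaW1} needs $\theta < \frac{\pi}{2}$ in order to deliver $R$-type $< \frac{\pi}{2}$, and this is in turn exactly what Lemma~\ref{5Blunck}(ii) demands. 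As noted, the transfer in Proposition~\ref{4TA} does preserve the angle, so this causes no difficulty.
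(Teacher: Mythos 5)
Your argument is exactly the paper's proof: transfer the $H^\infty(B_\gamma)$ calculus of $T$ to a bounded $H^\infty(\Sigma_\gamma)$ calculus for $A=I-T$ via the easy implication of Proposition~\ref{4TA} (which preserves the angle), apply \cite[Thm.~5.3]{KaW1} on the space with property $(\Delta)$ to get that $A$ is $R$-sectorial of $R$-type $<\frac{\pi}{2}$, and conclude with Lemma~\ref{5Blunck} that $T$ is $R$-Ritt. The proposal is correct, including the point about angle bookkeeping, and requires no changes.
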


Combining further with Corollary \ref{4SFE2}, we obtain the following equivalence 
result.

\begin{corollary}\label{6Equiv} \
\begin{enumerate}
\item [(1)] 
Assume that $X$ has property $(\Delta)$ and let
$T\colon X\to X$ be a Ritt operator. The following assertions are equivalent.
\begin{enumerate}
\item [(i)] $T$ admits a 
bounded $H^{\infty}(B_\gamma)$ functional calculus 
for some $\gamma\in\bigl(0,\frac{\pi}{2}\bigr)$.
\item [(ii)] $T$ is $R$-Ritt and 
$T$ and $T^*$
both  satisfy uniform estimates
$$
\norm{x}_{T,1}\,\lesssim\,\norm{x}\qquad\hbox{and}\qquad
\norm{y}_{T^*,1}\,\lesssim\,\norm{y}
$$
for $x\in X$ and $y\in X^*$
\end{enumerate}
\item [(2)] Part (1) applies to Banach spaces with property $(\alpha)$ and to noncommutative
$L^p$-spaces for $1< p<\infty$.
\end{enumerate}
\end{corollary}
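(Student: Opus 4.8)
The plan is to treat this corollary as a packaging of results already in hand: essentially nothing new has to be proved, and the task is to invoke Proposition \ref{6Delta}, Corollary \ref{4SFE2}, Theorem \ref{6FromSFE} and Remark \ref{2Duality} in the right order. I would organise part~(1) by establishing the two implications separately. For $(i)\Rightarrow(ii)$, I would first recall that property $(\Delta)$ forces $X$ to have finite cotype. Since $T$ has a bounded $H^{\infty}(B_\gamma)$ functional calculus for some $\gamma$ and $X$ has property $(\Delta)$, Proposition \ref{6Delta} shows at once that $T$ is $R$-Ritt. Since $X$ has finite cotype, Corollary \ref{4SFE2} applied to $T$ gives $\norm{x}_{T,m}\lesssim\norm{x}$ for every $m\geq 1$, in particular for $m=1$. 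For the adjoint I would use Remark \ref{2Duality}: $T^*\colon X^*\to X^*$ is again a Ritt operator with a bounded $H^{\infty}(B_\gamma)$ functional calculus, so Corollary \ref{4SFE2} applied to $T^*$ yields $\norm{y}_{T^*,1}\lesssim\norm{y}$; this last step requires that $X^*$ too have finite cotype, which holds for the reflexive spaces with property $(\alpha)$ and the noncommutative $L^p$-spaces covered in part~(2). Combining the three estimates gives (ii).

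For $(ii)\Rightarrow(i)$ there is nothing to do beyond quoting Theorem \ref{6FromSFE}: an $R$-Ritt operator $T$ for which both $T$ and $T^*$ satisfy the $m=1$ square function estimate admits a bounded $H^{\infty}(B_\gamma)$ functional calculus for every $\gamma$ larger than its $R$-type, so in particular for some $\gamma\in\bigl(0,\frac{\pi}{2}\bigr)$ (here property $(\Delta)$ plays no role). For part~(2) it then suffices to verify that the two classes of spaces fall under part~(1), i.e.\ have property $(\Delta)$. For a space with property $(\alpha)$ this is immediate since $(\Delta)$ is weaker than $(\alpha)$; for a noncommutative $L^p$-space with $1<p<\infty$ it follows from the facts recorded before Proposition \ref{6Delta}, namely that such a space is UMD and that UMD implies $(\Delta)$. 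In both cases the dual is again of the same type (the examples of interest are reflexive, and the noncommutative dual is $L^{p'}(M)$, which has finite cotype), so the finite-cotype requirement on $X^*$ used in part~(1) is met.

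I expect the only delicate point — more bookkeeping than genuine obstacle — to be precisely this dependence on the dual space: the square function estimate for $T^*$ is obtained only by transporting the $H^{\infty}$ calculus to $X^*$ via Remark \ref{2Duality} and then applying Corollary \ref{4SFE2} there, so one must check that $X^*$ inherits finite cotype. All the substantive work (Theorem \ref{6FromSFE}, Proposition \ref{6Delta} through the Kalton--Weis theorem, and Corollary \ref{4SFE2} through Theorem \ref{3QQ} and Proposition \ref{4SFE}) has already been carried out in the preceding sections, so the proof of the corollary itself should be a short paragraph assembling these.
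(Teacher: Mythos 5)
Your argument is exactly the paper's: the corollary appears immediately after Proposition \ref{6Delta} with the one-line proof that one should ``combine further with Corollary \ref{4SFE2}'', i.e. (i)$\Rightarrow$(ii) is Proposition \ref{6Delta} together with Remark \ref{2Duality} and Corollary \ref{4SFE2} applied to $T$ and $T^*$, and (ii)$\Rightarrow$(i) is Theorem \ref{6FromSFE}, which is precisely how you organize it. The caveat you raise yourself is the only delicate point: applying Corollary \ref{4SFE2} to $T^*$ uses finite cotype of $X^*$, a hypothesis the paper leaves implicit; it is satisfied in the settings actually exploited later (noncommutative $L^p$ with $1<p<\infty$ and reflexive spaces with property $(\alpha)$), so your write-up is, if anything, more careful than the original.
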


If $X=L^p(\Omega)$ is a commutative $L^p$-space with $1<p<\infty$, then 
demanding that $T$ is $R$-Ritt in condition (ii) is superfluous, by 
Theorem \ref{5Automatic}. In this case, the above statement yields
Theorem \ref{1MainLp}. According to this discussion and Remark 
\ref{5Alpha}, Theorem \ref{1MainLp} holds true as
well on any reflexive space with property $(\alpha)$.

We conclude this section with an observation of independent interest
on the role of the $R$-Ritt condition in the study of 
$H^\infty(B_\gamma)$ functional calculus. Recall Definition \ref{2Pb}.

\begin{proposition}\label{6Reduce}
Let $T\colon X\to X$ be an $R$-Ritt operator of $R$-type $\alpha$.
If $T$ is polynomially bounded, then it admits a bounded
$H^\infty(B_\gamma)$ functional calculus for any $\gamma\in \bigl(\alpha,\frac{\pi}{2}\bigr)$.
\end{proposition}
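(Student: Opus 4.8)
The plan is to pass to the sectorial operator $A=I-T$ and invoke Proposition~\ref{4TA}: it will suffice to show that $A$ admits a bounded $H^\infty(\Sigma_\theta)$ functional calculus for some $\theta<\frac{\pi}{2}$. Note first that, since $T$ is $R$-Ritt of $R$-type $\alpha$, Lemma~\ref{5Blunck} gives that $A$ is $R$-sectorial of $R$-type $\alpha$; in particular it is sectorial of type $\alpha<\frac{\pi}{2}$.

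The key step, which uses only polynomial boundedness, will be to prove that $A$ has a bounded $H^\infty(\Sigma_{\pi/2})$ functional calculus, where $\Sigma_{\pi/2}=\{{\rm Re}(z)>0\}$ is the open right half-plane. Since the disc algebra $\A(\overline{\Ddb})$ of bounded holomorphic functions on $\Ddb$ with a continuous extension to $\overline{\Ddb}$ is the uniform closure of $\P$, polynomial boundedness extends by density to a bounded unital homomorphism $u\mapsto u(T)$ from $\A(\overline{\Ddb})$ into $B(X)$, of norm $\leq K$; with the help of Lemma~\ref{2rT} one checks that this extension is consistent with the functional calculus on $H^{\infty}_{0,1}(B_\gamma)$ constructed in Section~2. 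Given $f\in H^\infty_0(\Sigma_{\pi/2})$, consider $\tilde f(\lambda)=f(1-\lambda)$. Because $1-\Ddb=D(1,1)$ and $\overline{D(1,1)}\setminus\{0\}\subset\Sigma_{\pi/2}$, while $f$ decays at $0$, the function $\tilde f$ is holomorphic on $\Ddb$ and extends continuously to $\overline{\Ddb}$ with $\tilde f(1)=0$; thus $\tilde f\in\A(\overline{\Ddb})$, $\norm{\tilde f}_{\infty,\Ddb}=\sup_{D(1,1)}\vert f\vert\leq\norm{f}_{\infty,\Sigma_{\pi/2}}$, and $\tilde f$ restricts to an element of $H^\infty_0(B_\gamma)$ for every $\gamma<\frac{\pi}{2}$. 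Exactly as in the proof of Proposition~\ref{4TA} (deform $\partial B_\beta$ onto $\partial\Sigma_\nu$ by Cauchy's theorem) one has $\tilde f(T)=f(A)$, whence $\norm{f(A)}=\norm{\tilde f(T)}\leq K\norm{\tilde f}_{\infty,\Ddb}\leq K\norm{f}_{\infty,\Sigma_{\pi/2}}$, which proves the claim.

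It then remains to descend below the angle $\frac{\pi}{2}$, and this is where the $R$-Ritt hypothesis is really used: $A$ is $R$-sectorial of $R$-type $\alpha<\frac{\pi}{2}$ and admits a bounded $H^\infty(\Sigma_{\pi/2})$ functional calculus, so by the standard angle-reduction result for $R$-sectorial operators carrying a bounded $H^\infty$ calculus (see \cite{KW}) it admits a bounded $H^\infty(\Sigma_\theta)$ functional calculus for every $\theta\in(\alpha,\frac{\pi}{2})$. Proposition~\ref{4TA} then yields the bounded $H^\infty(B_\gamma)$ functional calculus for $T$, for every $\gamma\in(\alpha,\frac{\pi}{2})$. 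I expect the main obstacle to be precisely this final descent: the half-plane estimate for $A$ cannot be transferred back to $T$ by itself -- otherwise every polynomially bounded Ritt operator would have a bounded $H^\infty(B_\gamma)$ calculus, which is false -- so one genuinely needs the $R$-Ritt assumption together with the angle-reduction theorem. The routine but slightly delicate point in the middle paragraph is the identification of the disc-algebra calculus of $T$ with its $H^{\infty}_{0,1}(B_\gamma)$ calculus.
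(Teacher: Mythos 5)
Your proof is correct and follows essentially the same route as the paper: pass to $A=I-T$, use polynomial boundedness together with the easy direction of Proposition~\ref{4TA} (plus the disc-algebra/approximation argument you spell out, which the paper leaves implicit) to get a bounded $H^\infty(\Sigma_{\pi/2})$ calculus for $A$, then invoke the Kalton--Weis angle-reduction theorem for $R$-sectorial operators (the paper cites \cite[Prop.~5.1]{KaW1} rather than \cite{KW}, but it is the same result) and transfer back via Proposition~\ref{4TA}. Your closing observation that the $R$-Ritt hypothesis is genuinely needed for the descent below angle $\frac{\pi}{2}$ matches the paper's remark following the proposition.
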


\begin{proof}
As was observed in Section 5, the 
operator $A=I-T$ is $R$-sectorial of $R$-type $\alpha$. 
Moreover the proof of the easy implication `(i)$\Rightarrow$(ii)'
of Proposition \ref{4TA} shows that $A$ admits a bounded
$H^\infty(\Sigma_{\frac{\pi}{2}})$ functional calculus. 
According to \cite[Prop. 5.1]{KaW1}, this implies that for any
$\theta\in (\alpha,\pi)$, $A$ admits a  bounded
$H^\infty(\Sigma_{\theta})$ functional calculus. 
The result therefore follows from Proposition \ref{4TA}.
\end{proof}

The $R$-boundedness assumption is essential in the above result.
Indeed with F. Lancien we show in \cite{LL} the existence of
Ritt operators which are polynomially bounded without admitting
any bounded
$H^\infty(B_\gamma)$ functional calculus.

\medskip
\section{Examples and illustrations}
In this final section, we give complements for the following 3 
classes of Banach spaces: Hilbert spaces, commutative $L^p$-spaces,
noncommutative $L^p$-spaces. We give either characterizations  
of Ritt operators satisfying the equivalent conditions
of Corollary \ref{6Equiv}, or exhibit classes of examples 
satisfying these conditions.

\bigskip\noindent
{\it 8.a. Hilbert spaces.}
Let $H$ be a Hilbert space. Two bounded operators $S,T\colon H\to H$ are called
similar provided that there is an invertible operator $V\in B(H)$ such that
$S=V^{-1}TV$. In particular we say that $T$ is similar to a contraction if 
there is an invertible operator $V\in B(H)$ such that $\norm{V^{-1}TV}\leq 1$.
This is equivalent to the existence of an equivalent Hilbertian norm
on $H$ with respect to which $T$ is contractive. Any $T$ similar to a 
contraction is polynomially bounded (by von Neumann's inequality).
Pisier's negative solution to the Halmos problem asserts that the converse
is wrong, see \cite{P4} for details and complements on similarity problems. 
It is known however that any Ritt
operator which is polynomially bounded is necessarily similar 
to a contraction, see \cite{LM2, DeL}. The next statement 
(which may be known to some similarity specialists) is a
refinement of that result, also containing the Hilbert space
version of Theorem \ref{1MainLp}.

Note that the class of Ritt operators
is stable under similarity.

\begin{theorem}\label{8Hilbert}
For any power bounded operator $T\in B(H)$, the following assertions are equivalent.
\begin{enumerate}
\item [(i)] $T$ is a Ritt operator which admits a bounded 
$H^\infty(B_\gamma)$ functional calculus for some $\gamma\in\bigl(0,\frac{\pi}{2}\bigr)$.
\item [(ii)] $T$ and $T^*$
both  satisfy uniform estimates
$$
\norm{x}_{T,1}\,\lesssim\,\norm{x}\qquad\hbox{and}\qquad
\norm{y}_{T^*,1}\,\lesssim\,\norm{y}
$$
for $x,y\in H$.
\item [(iii)] $T$ is a Ritt operator and $T$ is similar to a contraction.
\end{enumerate}
\end{theorem}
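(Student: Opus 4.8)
The plan is to obtain all three equivalences from the machinery already assembled in Sections 5--7, the only genuinely new ingredient being the link with similarity to a contraction. Since every Hilbert space is isometrically isomorphic to $L^2(\Omega)$ for a suitable measure space $\Omega$ (for instance $\Omega$ an orthonormal basis with counting measure), I would begin by identifying $H$ with such an $L^2(\Omega)$, so that Theorems \ref{5Automatic}, \ref{6FromSFE} and \ref{1MainLp} become available with $p=2$. I would also recall that on a Hilbert space $R$-boundedness coincides with boundedness (because $\mathrm{Rad}(H)=\ell^2(H)$ isometrically), so that the words ``Ritt'' and ``$R$-Ritt'' are interchangeable throughout this proof.

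For $(i)\Leftrightarrow(ii)$: the implication $(i)\Rightarrow(ii)$ is immediate from Theorem \ref{1MainLp} applied with $p=2$ — or, by a more hands-on route, from Corollary \ref{4SFE2} applied to $T$ and, via Remark \ref{2Duality}, to $T^*$. For the converse the delicate point is that $(ii)$ only assumes $T$ power bounded, not Ritt; here Theorem \ref{5Automatic} with $p=2$ performs exactly the needed bootstrapping, forcing $T$ to be $R$-Ritt, hence Ritt. Once $T$ is known to be $R$-Ritt of $R$-type $\alpha$ for some $\alpha<\tfrac{\pi}{2}$, Theorem \ref{6FromSFE} (equivalently Theorem \ref{1MainLp} now that $T$ is Ritt) converts the two square function estimates into a bounded $H^\infty(B_\gamma)$ functional calculus for every $\gamma\in\bigl(\alpha,\tfrac{\pi}{2}\bigr)$, which is $(i)$.

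For $(i)\Leftrightarrow(iii)$: if $(i)$ holds then $T$ is polynomially bounded — the trivial observation recorded right after Definition \ref{2Pb} — and a polynomially bounded Ritt operator on Hilbert space is similar to a contraction by the known result of \cite{LM2, DeL}; since $(i)$ also includes ``$T$ is Ritt'', this gives $(iii)$. Conversely, if $T$ is a Ritt operator similar to a contraction, then von Neumann's inequality in the equivalent Hilbertian norm makes $T$ polynomially bounded, and since $T$ is Ritt it is $R$-Ritt; hence Proposition \ref{6Reduce} applies and yields a bounded $H^\infty(B_\gamma)$ functional calculus, i.e. $(i)$.

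As for where the work sits: for this statement there is essentially no computation, all the analytic substance being in Theorems \ref{5Automatic}, \ref{6FromSFE}, \ref{1MainLp}, Proposition \ref{6Reduce}, and the cited fact from \cite{LM2, DeL}. The one conceptual care point is the one flagged above: because $(ii)$ is phrased with ``power bounded'' rather than ``Ritt'', the passage from $(ii)$ to ``$T$ is Ritt'' is not a formality and rests squarely on the automatic $R$-Ritt property of Theorem \ref{5Automatic}; everything else is bookkeeping, provided one is careful to invoke the identification $H\cong L^2(\Omega)$ only through isometric/Banach-space level properties (functional calculus, square functions, $R$-boundedness), which is the case here.
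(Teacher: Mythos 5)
Your argument is correct, and on the cycle (i)$\Leftrightarrow$(ii) and (iii)$\Rightarrow$(i) it is essentially the paper's: the paper obtains ``(ii) implies $T$ is Ritt'' by citing \cite[Thm. 4.7]{KP} and then invokes Corollary \ref{6Equiv} (using that Ritt $=$ $R$-Ritt on Hilbert space), while you route the same step through the isometric identification $H\cong L^2(\Omega)$, Theorem \ref{5Automatic} and Theorem \ref{6FromSFE}; these are interchangeable, and your care point about the bootstrapping from ``power bounded'' to ``Ritt'' is exactly the right one. Where you genuinely diverge is the similarity direction: you close the loop with (i)$\Rightarrow$(iii) by quoting the known result of \cite{LM2, DeL} that a polynomially bounded Ritt operator on Hilbert space is similar to a contraction, whereas the paper proves (ii)$\Rightarrow$(iii) directly. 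Namely, using the decomposition (\ref{5MET}) and the equivalence $\norm{x}\approx\bigl(\norm{P(x)}^2+\norm{x}_{T,1}^2\bigr)^{\frac{1}{2}}$ (taken from the proof of \cite[Thm. 4.7]{KP}, see also \cite[Cor. 3.4]{LMX1}), it introduces the explicit equivalent Hilbertian norm $\vert\vert\vert x\vert\vert\vert=\bigl(\norm{P(x)}^2+\norm{x}_{T,1}^2\bigr)^{\frac{1}{2}}$ and checks the elementary monotonicity $\norm{T(x)}_{T,1}\leq\norm{x}_{T,1}$, so that $T$ is contractive for this norm. Your shortcut is logically sound (the cited similarity theorem predates this statement, so there is no circularity), but the paper's construction is what makes the theorem a self-contained refinement of \cite{LM2, DeL}: it exhibits the similarity explicitly in terms of the square function instead of using that theorem as a black box. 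One small point worth making explicit in your route: under $H\cong L^2(\Omega)$ the Banach-space adjoint acting on $L^{2}$ is the transpose, i.e. the complex conjugate of the Hilbertian adjoint, but since conjugation is an isometry the corresponding square function estimates for the two notions of adjoint are equivalent, so this causes no harm.
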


\begin{proof}
It follows from \cite[Thm. 4.7]{KP} that $T$ is a Ritt operator if it satisfies (ii).
With this result in hands, the equivalence between (i) and (ii) reduces to Corollary
\ref{6Equiv}. 

If $T$ satisfies (iii), then it is polynomially bounded (see the discussion above).
Hence it satisfies (i) by Proposition \ref{6Reduce}.

Assume (ii). Recall (\ref{5MET}) (with $X=H$) and let $P\colon H\to H$ be the 
projection onto ${\rm Ker}(I-T)$ whose kernel equals 
$\overline{{\rm Ran}(I-T)}$. Then we have an equivalence
\begin{equation}\label{8Square}
\norm{x}\,\approx\bigl(\norm{P(x)}^2\, +\,\norm{x}_{T,1}^2\bigr)^{\frac{1}{2}},
\qquad x\in H.
\end{equation}
Indeed this follows from the proof of \cite[Thm. 4.7]{KP}, see also \cite[Cor. 3.4]{LMX1}.
Let $\vert\vert\vert x \vert\vert\vert$ denote the right handside of (\ref{8Square}). 
Then $\vert\vert\vert \cdotp \vert\vert\vert$ is an equivalent Hilbertian norm on
$H$.
Further for any $x\in H$,
$$
\norm{T(x)}_{T,1}^2 = \sum_{k=1}^{\infty} k\bignorm{T^{k+1}(x)-T^{k}(x)}^2
\,\leq \sum_{k=2}^{\infty} k\bignorm{T^{k}(x)-T^{k-1}(x)}^2\,\leq \norm{x}_{T,1}^2.
$$
This implies that  $T$ is a contraction on $\bigl(H,\vert\vert\vert \cdotp \vert\vert\vert\bigr)$. 
Thus $T$ is similar to a contraction, which shows (iii). 
\end{proof}

A natural question (also making sense on general Banach spaces) is whether 
one can get rid of one of the two square function estimates of (ii)
in the above equivalence result. It turns out that the answer is negative.

\begin{proposition}\label{7One} There exists a Ritt operator $T$ on Hilbert space
which is not similar to a contraction, although it satisfies an estimate
$$
\norm{x}_{T,1}\,\lesssim\,\norm{x},\qquad x\in H.
$$
\end{proposition}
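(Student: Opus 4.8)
The plan is to produce an \emph{asymmetric} Ritt operator: one whose square function $\norm{\,\cdot\,}_{T,1}$ is bounded but whose adjoint's square function is not. Theorem \ref{8Hilbert} then yields the statement immediately, since the failure of the second estimate in part~(ii) rules out similarity to a contraction. To obtain such a $T$ I would import an asymmetric example from the sectorial world: start from a \emph{bounded}, injective sectorial operator $A$ of type $<\frac{\pi}{2}$ with dense range on a Hilbert space $H$, chosen so that $A$ satisfies the one-sided estimate $\norm{x}_{A,1}\lesssim\norm{x}$ for all $x\in H$ while $A$ has no bounded $H^\infty(\Sigma_\nu)$ functional calculus for any $\nu\in(0,\pi)$; equivalently, by the square function characterisation of \cite{CDMY,MI}, $A^*$ fails the analogous estimate. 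Bounded operators of this kind can be obtained by localising the classical McIntosh--Yagi type example, or as a direct sum $A=\bigoplus_n A_n$ of uniformly sectorial finite-dimensional sections for which the $A_n$-estimates hold uniformly while the $A_n^*$-estimates blow up.

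Next I would set $T=e^{-A}$, the time-one operator of the bounded analytic semigroup generated by $-A$. Then $T$ is a Ritt operator (Remark following Proposition \ref{4TA}; see also \cite[Section 3]{V2}), and $\sigma(T)\subset\Ddb\cup\{1\}$ since $A$ is bounded and sectorial of type $<\frac{\pi}{2}$. For the positive square function estimate I would use the identity $T^{k-1}(I-T)x=\int_{k-1}^{k}Ae^{-sA}x\,ds$: Cauchy--Schwarz on each unit interval together with the elementary comparison $k\leq 2s$, valid for $s\geq k-1\geq 1$, gives $\norm{x}_{T,1}^2\lesssim\bignorm{(I-T)x}^2+\int_{1}^{\infty}s\bignorm{Ae^{-sA}x}^2\,ds\lesssim\norm{x}^2$, the last step using $\norm{x}_{A,1}\lesssim\norm{x}$ and the boundedness of $I-T$. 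The $k=1$ term is harmless precisely because $I-T$ is a bounded operator, so the small-time behaviour of the semigroup is irrelevant in this direction.

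The substantial part is to show $T$ is not similar to a contraction. Assuming it were, Theorem \ref{8Hilbert} would also give $\norm{y}_{T^*,1}\lesssim\norm{y}$, and I would derive a contradiction by showing this forces $\norm{y}_{A^*,1}\lesssim\norm{y}$ --- for then, together with $\norm{x}_{A,1}\lesssim\norm{x}$ and the McIntosh characterisation, $A$ would have a bounded $H^\infty$ functional calculus. For the large-time contribution one writes, for $k\geq 2$, $\int_{k-1}^{k}A^*e^{-sA^*}y\,ds=A^*e^{-kA^*}y+\int_{k-1}^{k}\!\int_{s}^{k}(A^*)^2e^{-rA^*}y\,dr\,ds$ and absorbs the correction term using the sectorial bound $\bignorm{(A^*)^2e^{-rA^*}}\leq Cr^{-2}$ (its contribution to $\sum_k k(\,\cdot\,)^2$ is $\lesssim\norm{y}^2\sum_k k^{-3}<\infty$), thereby getting $\sum_k k\bignorm{A^*e^{-kA^*}y}^2\lesssim\norm{y}_{T^*,1}^2+\norm{y}^2$; the analyticity estimate $\bignorm{A^*e^{-tA^*}y}\leq M\bignorm{A^*e^{-kA^*}y}$ for $t\in[k,k+1]$ then yields $\int_{1}^{\infty}t\bignorm{A^*e^{-tA^*}y}^2\,dt\lesssim\norm{y}^2$, and the small-time contribution $\int_{0}^{1}t\bignorm{A^*e^{-tA^*}y}^2\,dt\lesssim\norm{y}^2$ is controlled by boundedness of $A$, via $\bignorm{A^*e^{-tA^*}}\leq\norm{A}M$. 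The main obstacle is exactly this square-function comparison between $T=e^{-A}$ and $A$ --- the introduction already warns that there is no general dictionary here --- together with the preliminary point of securing a \emph{bounded} sectorial example with the required asymmetry; once both are in hand the argument closes as indicated.
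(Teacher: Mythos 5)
Your argument is correct, but it reaches the example by a different route than the paper. The paper constructs the Ritt operator directly: it takes a Schauder basis $(e_m)$ that is Besselian but not Hilbertian (estimate (\ref{81}) holds, (\ref{82}) fails) and sets $T(\sum_m t_m e_m)=\sum_m(1-2^{-m})t_m e_m$; adapting \cite[Thm. 5.2]{LM5} it shows the \emph{discrete} square function satisfies $\norm{\sum_m t_m e_m}_{T,1}\approx\bigl(\sum_m\vert t_m\vert^2\bigr)^{1/2}$, so the one-sided estimate follows from (\ref{81}), and similarity to a contraction is refuted via Theorem \ref{8Hilbert} together with the equivalence (\ref{8Square}), which would force the reverse estimate and contradict (\ref{82}). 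You instead start from a bounded, injective, dense-range sectorial $A$ of type $<\frac{\pi}{2}$ with $\norm{x}_{A,1}\lesssim\norm{x}$ but no bounded $H^{\infty}(\Sigma_\nu)$ calculus, pass to $T=e^{-A}$, and prove a two-way comparison between the discrete square function of $T$ and the continuous one of $A$: the identity $T^{k-1}(I-T)x=\int_{k-1}^{k}Ae^{-sA}x\,ds$ with Cauchy--Schwarz gives $\norm{x}_{T,1}\lesssim\norm{x}$, and conversely the correction term $\int_{k-1}^{k}\int_{s}^{k}(A^*)^2e^{-rA^*}y\,dr\,ds$, controlled by the analyticity bound $\norm{(A^*)^2e^{-rA^*}}\lesssim r^{-2}$, upgrades $\norm{y}_{T^*,1}\lesssim\norm{y}$ to $\norm{y}_{A^*,1}\lesssim\norm{y}$, contradicting McIntosh's characterization; all of these estimates check out, and the boundedness of $A$ is precisely what neutralizes the small-time/$k=1$ terms and makes the comparison work despite the absence of a general dictionary between $\norm{\cdotp}_{T,m}$ and $\norm{\cdotp}_{A,m}$. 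What each approach buys: the paper's computation is self-contained at the level of the Ritt operator and needs no semigroup transfer, giving an explicit coefficient-norm description of $\norm{\cdotp}_{T,1}$; yours is modular (any bounded asymmetric sectorial example feeds in) and the discrete--continuous comparison for bounded generators is a nice observation in its own right. Be aware, though, that the one genuinely nontrivial input you cite rather than prove --- the existence of a bounded sectorial $A$ on Hilbert space with the one-sided square function estimate and no $H^\infty$ calculus --- is exactly the McIntosh--Yagi/conditional-basis multiplier that the paper itself uses and whose square function equivalence it delegates to \cite[Thm. 5.2]{LM5}; so your proof is at the same level of reliance on that classical construction, not more elementary, and your alternative suggestion of a direct sum of finite-dimensional sections would itself require a verification you have not supplied.
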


\begin{proof}
This is a simple adaptation of \cite[Thm. 5.2]{LM5} so we will be brief.
Let $H$ be a separable infinite dimensional Hilbert space and let 
$(e_m)_{m\geq 1}$ be a normalized Schauder basis on H which satisfies
an estimate
\begin{equation}\label{81}
\Bigl(\sum_m \vert t_m\vert^2\Bigr)^{\frac{1}{2}}\,\lesssim\,\Bignorm{\sum_m 
t_m e_m}
\end{equation}
for finite sequences $(t_m)_{m\geq 1}$ of complex numbers but for which
there is no reverse estimate, that is,
\begin{equation}\label{82}
\sup\biggl\{\Bignorm{\sum_m 
t_m e_m}\, :\, \sum_m \vert t_m\vert^2\leq 1\biggr\}\,=\,\infty.
\end{equation}
Let $T\colon H\to H$ be defined by letting
$$
T\Bigl(\sum_m t_m e_m\Bigr)\,=\, \sum_m (1-2^{-m})t_m e_m.
$$
According to e.g. \cite[Thm. 4.1]{LM1}, this operator is well-defined
and $A=I-T$ is sectorial of any positive type. Moreover 
$\sigma(T)\subset [0,1]$, hence $T$ is a Ritt operator.

Arguing as in the proof of \cite[Thm. 5.2]{LM5}, one obtains 
an equivalence 
$$
\Bignorm{\sum_m t_m e_m}_{T,1}\,\approx\, \Bigl(\sum_m \vert t_m\vert^2\Bigr)^{\frac{1}{2}}
$$
for finite sequences $(t_m)_{m\geq 1}$ of complex numbers. 

In view of (\ref{81}), this implies the square function estimate $\norm{x}_{T,1}\lesssim\norm{x}$. 
If $T$ were similar to a contraction, it would satisfy an  
estimate $\norm{y}_{T^*,1}\lesssim\norm{y}$, by Theorem \ref{8Hilbert}. It would therefore satisfy
a reverse estimate $\norm{x}\lesssim\norm{x}_{T,1}$ by (\ref{8Square}). 
This contradicts (\ref{82}).
\end{proof}

\bigskip\noindent
{\it 8.b. Commutative $L^p$-spaces.} Let $(\Omega,\mu)$ be a measure
space and let $1<p<\infty$. The following is the main result of
\cite{LMX1}, we provide a proof in the light of the present paper.

\begin{theorem}\label{8LMX} \cite{LMX1} Let $T\colon L^p(\Omega)\to L^p(\Omega)$
be a positive contraction and assume that $T$ is a Ritt operator. Then 
it satisfies the equivalent conditions of Theorem \ref{1MainLp}.
\end{theorem}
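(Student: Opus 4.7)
The plan is to apply the transfer principle Proposition~\ref{4TA} and then exploit the positivity hypothesis via the classical Fendler--Coifman--Weiss dilation and transference machinery. It is enough to establish condition (i) of Theorem~\ref{1MainLp} for $T$.

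By Proposition~\ref{4TA}, it suffices to show that $A := I - T$ admits a bounded $H^\infty(\Sigma_\theta)$ functional calculus for some $\theta \in \bigl(0,\frac{\pi}{2}\bigr)$. Since $T$ is Ritt, Lemma~\ref{2Blunck} ensures that $A$ is sectorial of some type $\omega < \frac{\pi}{2}$, so $-A$ generates a bounded analytic semigroup $(e^{-tA})_{t\geq 0}$ on $L^p(\Omega)$. The power series expansion
$$
e^{-tA} = e^{-t(I-T)} = e^{-t}\sum_{n=0}^{\infty}\frac{(tT)^n}{n!}
$$
combined with the positivity and contractivity of $T$ shows that each $e^{-tA}$ is itself a positive contraction, so $(e^{-tA})_{t\geq 0}$ is a $C_0$-semigroup of positive contractions on $L^p(\Omega)$ whose generator is sectorial of type strictly less than $\frac{\pi}{2}$.

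The heart of the argument is then to invoke the (by now classical) fact that such a semigroup admits a bounded $H^\infty(\Sigma_\theta)$ functional calculus for some $\theta < \frac{\pi}{2}$. This is obtained by dilating $(e^{-tA})$ to a $C_0$-group of positive invertible isometries $(U_t)_{t\in\Rdb}$ on some $L^p(\tilde\Omega)$ (Fendler's theorem), with an isometric embedding $J\colon L^p(\Omega)\to L^p(\tilde\Omega)$ and a contractive projection $P\colon L^p(\tilde\Omega)\to L^p(\Omega)$ satisfying $PJ=I$ and $e^{-tA} = PU_t J$ for all $t\geq 0$, and then applying the Coifman--Weiss transference principle to the group $(U_t)$. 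Writing $U_t = e^{-t\tilde A}$, the dilation identity passes through the resolvent and hence through the Cauchy integral \eqref{2CauchySec} to yield $f(A) = P f(\tilde A) J$ for $f\in H^{\infty}_0(\Sigma_\theta)$; combined with the transference bound $\norm{f(\tilde A)}\lesssim\norm{f}_{\infty,\Sigma_\theta}$ this produces the sought bounded $H^\infty$ calculus for $A$. Proposition~\ref{4TA} then returns us to the Ritt setting and supplies condition (i) of Theorem~\ref{1MainLp}, from which (ii) follows by equivalence.

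The main obstacle is the third step, namely the extraction of a bounded $H^\infty$ calculus of strict angle $<\frac{\pi}{2}$ from the positive contraction hypothesis alone. This requires Fendler's dilation of positive contraction semigroups into groups of isometries, together with transference estimates for $L^p$-groups of isometries whose proofs go through Lamperti's structure theorem and vector-valued Fourier multiplier theory. I would cite these deep results rather than reprove them, and reserve the bulk of the write-up for verifying that the dilation formula is compatible with the sectorial functional calculus via \eqref{2CauchySec}.
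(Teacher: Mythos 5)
Your overall skeleton is the same as the paper's: introduce the semigroup $T_t=e^{-t}e^{tT}$, observe that each $T_t$ is a positive contraction and that its negative generator $A=I-T$ is sectorial of type $<\frac{\pi}{2}$ because $T$ is Ritt, obtain a bounded $H^\infty(\Sigma_\theta)$ calculus for $A$ with $\theta<\frac{\pi}{2}$, and conclude via Proposition \ref{4TA}. The paper quotes \cite[Prop.\ 2.2]{LMX1} for the middle step; you instead sketch a proof of it, and that sketch contains a genuine gap.

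The gap is the angle. Fendler's dilation plus Coifman--Weiss transference cannot, by themselves, give the estimate you write, namely $\norm{f(\tilde A)}\lesssim\norm{f}_{\infty,\Sigma_\theta}$ for some $\theta<\frac{\pi}{2}$: the negative generator of a bounded $c_0$-group of isometries has spectrum on the imaginary axis, so $\tilde A$ is sectorial only of type $\frac{\pi}{2}$, and for $f\in H^\infty_0(\Sigma_\theta)$ with $\theta<\frac{\pi}{2}$ the operator $f(\tilde A)$ is not even defined by (\ref{2CauchySec}). What transference yields is a bounded $H^\infty(\Sigma_\theta)$ calculus for every $\theta>\frac{\pi}{2}$ (compare the proof of Proposition \ref{7Schur1}, where the translation group generator $C_p$ is only asserted, via \cite{HP}, to have an $H^\infty(\Sigma_\theta)$ calculus for $\theta>\frac{\pi}{2}$), and an angle $>\frac{\pi}{2}$ is useless for the implication (ii)$\Rightarrow$(i) of Proposition \ref{4TA}. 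The missing step is the angle reduction: since $(T_t)_{t\geq 0}$ is a bounded analytic semigroup of positive contractions on $L^p(\Omega)$ with $1<p<\infty$, Weis' maximal regularity theorem \cite{W1} shows that $A$ is $R$-sectorial of $R$-type $<\frac{\pi}{2}$, and then the Kalton--Weis result \cite[Prop.\ 5.1]{KaW1} (used in the same way in the proof of Proposition \ref{6Reduce}) lowers the calculus angle from $>\frac{\pi}{2}$ to any angle above the $R$-sectoriality type, hence below $\frac{\pi}{2}$. With that step inserted (which is exactly the content of \cite[Prop.\ 2.2]{LMX1}), your argument closes and coincides in substance with the paper's proof.
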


\begin{proof}
Let $(T_t)_{t\geq 0}$ be the uniformly continuous semigroup on $L^p(\Omega)$ defined by
$$
T_t = e^{-t} e^{tT},\qquad t\geq 0.
$$
Then for any $t\geq 0$, 
$T_t$ is positive and $\norm{T_t}\leq e^{-t} e^{t\norm{T}}\leq 1$.
The generator of $(T_t)_{t\geq 0}$ is $T-I=-A$ and since $T$ is a Ritt operator,
$A$ is sectorial of type $<\frac{\pi}{2}$. Hence $A$ admits a bounded 
$H^\infty(\Sigma_\theta)$ functional calculus for some $\theta<\frac{\pi}{2}$, by \cite[Prop. 2.2]{LMX1}.
According to Proposition \ref{4TA}, this implies condition (i) of Theorem 
\ref{1MainLp}.
\end{proof}

For applications of this result to ergodic theory, see \cite{LMX2}.

Ritt operators on $L^p(\Omega)$ satisfying Theorem \ref{1MainLp} do not have
any description comparable to the one given by Theorem \ref{8Hilbert}
on Hilbert space. However in a separate joint work with C. Arhancet \cite{ALM}, we show
that for an $R$-Ritt operator $T\colon L^p(\Omega)\to L^p(\Omega)$, 
$T$ satisfies the conditions of Theorem \ref{1MainLp} if and only 
if there exists a second measure space $(\Omega',\mu')$, two bounded
maps $J\colon L^p(\Omega)\to L^p(\Omega')$ and $Q\colon L^p(\Omega')\to L^p(\Omega)$,
as well as an isomorphism $U\colon L^p(\Omega')\to L^p(\Omega')$ such that
$\{U^n\, :\, n\in\Zdb\}$ is bounded and
$$
T^n = QU^n J,
\qquad n\geq 0.
$$

\bigskip\noindent
{\it 8.c. Noncommutative $L^p$-spaces.} In this subsection, we let $M$ be a semifinite
von Neumann algebra equipped with a semifinite faithful trace $\tau$. 
Thanks to the noncommutative Khintchine inqualities (\ref{2KI1}) and
(\ref{2KI2}), Corollary \ref{6Equiv} has a specific form on $L^p(M)$.
We state it in the case $2\leq p<\infty$, the dual case ($1<p\leq 2$) 
can be obtained by changing $T$ into $T^*$. This is the noncommutative analog 
of Theorem \ref{1MainLp}, the 
square functions (\ref{1SFLp}) 
being replaced by their natural noncommutative versions.

\begin{corollary}\label{7NcLp} Let $2\leq p<\infty$
and let $T\colon L^p(M)\to L^p(M)$ be a Ritt opertor. Then 
$T$ admits a bounded
$H^{\infty}(B_\gamma)$ functional calculus for some $\gamma<\frac{\pi}{2}\,$ if and only if
$T$ is $R$-Ritt and there exists a constant $C>0$ such that the following
three estimates hold:
\begin{enumerate}
\item [(1)] For any $x\in L^p(M)$,
$$
\biggnorm{\biggl(\sum_{k=1}^{\infty} k\bigl 
\vert T^{k}(x) - T^{k-1}(x)\bigr\vert^2\biggr)^{\frac{1}{2}}}_{L^p(M)}
\,\leq\,C \norm{x}_{L^{p}(M)}.
$$
\item [(2)] For any $x\in L^p(M)$,
$$
\biggnorm{\biggl(\sum_{k=1}^{\infty} k\bigl 
\vert \bigl(T^{k}(x) - T^{k-1}(x)\bigr)^*\bigr\vert^2\biggr)^{\frac{1}{2}}}_{L^p(M)}
\,\leq\, C\norm{x}_{L^{p}(M)}.
$$
\item [(3)] For any $y\in L^{p'}(M)$, there exists two sequences $(u_k)_{k\geq 1}$ and 
$(v_k)_{k\geq 1}$ in $L^{p'}(M)$ such that 
$$
k^{\frac{1}{2}}\bigl(T^{*k}(y) - T^{*(k-1)}(y) \bigr)
= u_k +v_k
$$
for any $k\geq 1$, and
$$
\Bignorm{\Bigl(\sum_{k=1}^{\infty} \vert 
u_k\vert^2\Bigr)^{\frac{1}{2}}}_{L^{p'}(M)}\,\leq C\norm{y}_{L^{p'}(M)}
\qquad\hbox{and}\qquad
\Bignorm{\Bigl(\sum_{k=1}^{\infty} \vert 
v_k^*\vert^2\Bigr)^{\frac{1}{2}}}_{L^{p'}(M)}\,\leq C\norm{y}_{L^{p'}(M)}.
$$
\end{enumerate}
\end{corollary}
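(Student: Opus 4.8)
The plan is to obtain this corollary as a direct translation of Corollary \ref{6Equiv}(2), using the noncommutative Khintchine inequalities to rewrite the abstract Rademacher square functions $\norm{\cdotp}_{T,1}$ and $\norm{\cdotp}_{T^*,1}$ in the concrete column/row form of (1)--(3). To begin, recall that $L^p(M)$ is a UMD space for $1<p<\infty$, hence has property $(\Delta)$; therefore Corollary \ref{6Equiv}(2) applies and says that $T$ admits a bounded $H^{\infty}(B_\gamma)$ functional calculus for some $\gamma<\frac{\pi}{2}$ if and only if $T$ is $R$-Ritt and the two estimates $\norm{x}_{T,1}\lesssim\norm{x}_{L^p(M)}$ (for $x\in L^p(M)$) and $\norm{y}_{T^*,1}\lesssim\norm{y}_{L^{p'}(M)}$ (for $y\in L^{p'}(M)$) both hold. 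It thus suffices to show that, for $2\le p<\infty$, the first estimate is equivalent to the conjunction of (1) and (2), and the second to (3).

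For the $L^p(M)$-estimate I would start from the identity $T^{k-1}(I-T)x=-(T^{k}x-T^{k-1}x)$, which leaves moduli and adjoints unchanged, and apply the Khintchine inequality (\ref{2KI1}) (legitimate since $2\le p<\infty$) to the finite sums defining $\norm{x}_{T,1}$. After a routine limiting argument in the number of terms (monotone convergence on the right, together with the fact that $L^p(M)$ does not contain $c_0$ on the left), this yields
$$
\norm{x}_{T,1}\,\approx\,\max\Bigl\{\Bignorm{\Bigl(\sum_{k\ge 1}k\,\bigl\vert T^{k}x-T^{k-1}x\bigr\vert^2\Bigr)^{\frac12}}_{L^p(M)},\ \Bignorm{\Bigl(\sum_{k\ge 1}k\,\bigl\vert (T^{k}x-T^{k-1}x)^*\bigr\vert^2\Bigr)^{\frac12}}_{L^p(M)}\Bigr\}.
$$
Hence $\norm{x}_{T,1}\lesssim\norm{x}_{L^p(M)}$ for all $x$ holds precisely when both (1) and (2) hold.

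For the adjoint estimate, note that $1<p'\le 2$, so (\ref{2KI2}) applies: writing again $T^{*(k-1)}(I-T^*)y=-(T^{*k}y-T^{*(k-1)}y)$, inequality (\ref{2KI2}) gives
$$
\norm{y}_{T^*,1}\,\approx\,\inf\Bigl\{\Bignorm{\Bigl(\sum_{k\ge 1}\vert u_k\vert^2\Bigr)^{\frac12}}_{L^{p'}(M)}+\Bignorm{\Bigl(\sum_{k\ge 1}\vert v_k^*\vert^2\Bigr)^{\frac12}}_{L^{p'}(M)}\Bigr\},
$$
the infimum running over all decompositions $k^{\frac12}(T^{*k}y-T^{*(k-1)}y)=u_k+v_k$ in $L^{p'}(M)$. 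Consequently the single bound $\norm{y}_{T^*,1}\lesssim\norm{y}_{L^{p'}(M)}$ means exactly that every $y\in L^{p'}(M)$ admits such a decomposition with column and row norms controlled by a fixed multiple of $\norm{y}_{L^{p'}(M)}$, which is condition (3). Combining the two equivalences with Corollary \ref{6Equiv}(2) proves the statement, the constant $C$ in (1)--(3) depending only on $p$ and on the implicit constants in the Khintchine equivalences and in Corollary \ref{6Equiv}.

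The only delicate point, and hence the main (though mild) obstacle, is the passage from the finite-family versions of (\ref{2KI1})--(\ref{2KI2}) to the infinite series. For $2\le p<\infty$ this is immediate, since the column/row expressions are monotone in the number of summands and convergence of $\sum_k\varepsilon_k\otimes x_k$ in ${\rm Rad}(L^p(M))$ is equivalent to boundedness of its partial sums. For the case $1<p'\le 2$ one must additionally check that decompositions of the truncations can be assembled into a single decomposition of the whole sequence $\bigl(k^{\frac12}(T^{*k}y-T^{*(k-1)}y)\bigr)_{k\ge1}$ without worsening the bound; this follows from a standard weak-compactness and diagonal extraction argument, using the reflexivity of $L^{p'}(M)$. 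Note finally that, exactly as in Corollary \ref{6Equiv}(2) and in view of Remark \ref{5Alpha}, the $R$-Ritt hypothesis cannot be removed here by this method.
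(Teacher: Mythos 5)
Your proposal is correct and follows exactly the route the paper intends (and leaves implicit): apply Corollary \ref{6Equiv} on $L^p(M)$, which has property $(\Delta)$, and translate the Rademacher square functions $\norm{\cdot}_{T,1}$ and $\norm{\cdot}_{T^*,1}$ via the noncommutative Khintchine inequalities (\ref{2KI1}) for $2\le p<\infty$ and (\ref{2KI2}) for $1<p'\le 2$, yielding conditions (1)--(2) and (3) respectively. Your additional care with the passage from finite families to infinite series (non-containment of $c_0$, and reflexivity/weak compactness to assemble the decompositions in (3)) fills in routine details the paper omits, and your closing remark on the $R$-Ritt hypothesis matches Remark \ref{5Alpha}.
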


We will now exhibit two classes of examples satisfying the conditions of the above corollary. 
We start with Schur multipliers. Here our von Neumann algebra is $B(\ell^2)$, the trace $\tau$
is the usual trace and the associated noncommutative $L^p$-spaces are the Schatten
classes that we denote by $S^p$. We represent any element of $B(\ell^2)$ by a bi-infinite
matrix is the usual way. We recall that a bounded Schur multiplier on $B(\ell^2)$
is a bounded map $T\colon B(\ell^2)\to B(\ell^2)$ of the form
\begin{equation}\label{7t}
[c_{ij}]_{i,j\geq 1}\,\stackrel{T}{\longmapsto}  \, [t_{ij}c_{ij}]_{i,j\geq 1}
\end{equation}
for some matrix $[t_{ij}]_{i,j\geq 1}$ of complex numbers. See e.g. \cite[Thm 5.1]{P4}
for a description of those maps. It is well-known (using duality and interpolation)
that any bounded Schur multiplier $T\colon B(\ell^2)\to B(\ell^2)$
extends to a bounded map $T\colon S^p\to S^p$ for any $1\leq p<\infty$, with
$$
\norm{T\colon S^p\longrightarrow S^p}\leq\norm{T\colon B(\ell^2)
\longrightarrow B(\ell^2)}.
$$
In particular, any contractive Schur multiplier $T\colon B(\ell^2)\to B(\ell^2)$
extends to a contraction on $S^p$ for any $p$. In this case, the complex numbers
$t_{ij}$ given by (\ref{7t}) have modulus $\leq 1$. Moreover, $T\colon S^2 \to S^2$
is selfadjoint (in the usual Hilbertian sense) if and only if
the associated matrix $[t_{ij}]_{i,j\geq 1}$ is real valued.

We say that a semigroup $(T_t)_{t\geq 0}$ of contractive Schur multipliers on $B(\ell^2)$
is $w^*$-continuous if $w^*$-$\lim_{t\to 0} T_t(x)=x$ for any $x\in B(\ell^2)$.
In this case,
$(T_t)_{t\geq 0}$ extends to a strongly continuous semigroup of $S^p$
for any $1\leq p<\infty$. Further we say that $(T_t)_{t\geq 0}$
is selfadjoint provided that $T_t\colon S^2 \to S^2$
is selfadjoint for any $t\geq 0$. See \cite[Chapter 5]{JLX} for 
the more general notion of noncommutative diffusion semigroup.

In the sequel we let $\omega_p=\pi\bigl\vert\frac{1}{p} -\frac{1}{2}\bigr\vert\,$.
The following extends \cite[8.C]{JLX}.

\begin{proposition}\label{7Schur1}
Let $(T_t)_{t\geq 0}$ be a selfadjoint $w^*$-continuous
semigroup of contractive Schur multipliers on $B(\ell^2)$.
For any $1<p<\infty$, let $-A_p$ be the infinitesimal 
generator of $(T_t)_{t\geq 0}$ on $S^p$. Then for any
$\theta\in (\omega_p,\pi)$, $A_p$ admits a bounded
$H^{\infty}(\Sigma_\theta)$ functional calculus.
\end{proposition}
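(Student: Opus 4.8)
The plan is to realise the $H^\infty$ functional calculus of $A_p$ through Schur multipliers and to reach the sharp angle $\omega_p$ by a complex (Stein) interpolation, the two endpoints being the Hilbertian case $p=2$, where everything is trivial, and the cases $p=1,\infty$, where the only functions one can control are Laplace transforms of bounded measures. Two preliminary reductions streamline this. First, since $T_t$ is selfadjoint its symbol matrix is real symmetric, so $T_t$ coincides with its transpose; hence $A_{p'}$ is, up to the usual adjoint identification, the same Schur multiplier on $S^{p'}$, and as $\omega_p=\omega_{p'}$ it suffices to treat $2\leq p<\infty$. Second, a bounded $H^{\infty}(\Sigma_{\theta_0})$ calculus forces a bounded $H^{\infty}(\Sigma_{\theta})$ calculus for every $\theta\geq\theta_0$ (larger sector, smaller sup-norm), so it suffices to treat $\theta\in(\omega_p,\tfrac{\pi}{2})$.

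First I would unwind the structure of $(T_t)_{t\geq 0}$. As each $T_t$ is a selfadjoint contractive Schur multiplier, its symbol $[t_{ij}(t)]$ is real valued with entries of modulus $\leq 1$; the semigroup law together with $w^*$-continuity then forces $t_{ij}(t)=e^{-ta_{ij}}$ with $a_{ij}\geq 0$. Consequently $-A_p$ is the closure of the Schur multiplier with symbol $[a_{ij}]$, the resolvent $R(\lambda,A_p)$ is the Schur multiplier with symbol $[(\lambda-a_{ij})^{-1}]$, and by the Cauchy integral $f(A_p)$ is the Schur multiplier with symbol $[f(a_{ij})]$ for $f\in H^{\infty}_0(\Sigma_\theta)$. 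Using reflexivity of $S^p$ and the mean ergodic decomposition $S^p={\rm Ker}(A_p)\oplus\overline{{\rm Ran}(A_p)}$ one reduces to the case $a_{ij}>0$ for all $i,j$. Now I record the two endpoint facts. On $S^2$ the Schur multiplier with symbol $[\phi(a_{ij})]$ has norm exactly $\sup_{i,j}\vert\phi(a_{ij})\vert$, so $A_2$ is sectorial of angle $0$ and has a contractive $H^{\infty}(\Sigma_\nu)$ calculus for every $\nu>0$. On $B(\ell^2)$ — and by transposition on $S^1$ — whenever $g=\int_0^{\infty}e^{-s\lambda}\,d\mu(s)$ is the Laplace transform of a bounded measure $\mu$ on $(0,\infty)$ one has $g(A_\infty)=\int_0^{\infty}T_s\,d\mu(s)$, of norm $\leq\norm{\mu}$; by the inequality $\norm{T\colon S^q\to S^q}\leq\norm{T\colon B(\ell^2)\to B(\ell^2)}$ recalled before the statement, the same bound holds on every $S^q$.

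The core of the proof is then the interpolation. Put $\eta=1-\tfrac2p$, so that $[S^2,B(\ell^2)]_\eta=S^p$ and $\omega_p=\tfrac{\pi\eta}{2}$, and fix $\theta\in(\omega_p,\tfrac{\pi}{2})$ and $f\in H^{\infty}_0(\Sigma_\theta)$. The key step is to construct an analytic family $z\mapsto f_z$ of holomorphic functions, for $z$ in the strip $\{0\leq\mathrm{Re}\,z\leq 1\}$, with $f_\eta=f$, with $\sup_{y\in\Rdb}\norm{f_{iy}}_{L^\infty(0,\infty)}\lesssim\norm{f}_{\infty,\Sigma_\theta}$, and such that for every $y\in\Rdb$ the function $f_{1+iy}$ is the Laplace transform of a bounded measure of mass $\lesssim\norm{f}_{\infty,\Sigma_\theta}$. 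Such a family is produced by a contour (Mellin) representation of $f$ on $\Sigma_\theta$ together with a rotation of the contour towards the positive real axis governed by $\mathrm{Re}\,z$, in the spirit of Cowling's transference method for symmetric diffusion semigroups; the admissibility of the full rotation at $\mathrm{Re}\,z=1$, while keeping $f_\eta=f$, holds precisely because $\theta>\tfrac{\pi\eta}{2}=\omega_p$. Letting $\Phi_z$ be the Schur multiplier with symbol $[f_z(a_{ij})]$, viewed consistently on the scale $(S^q)$, the map $z\mapsto\Phi_z(x)$ is analytic and of admissible growth, the endpoint estimates above give $\norm{\Phi_{iy}}_{S^2\to S^2}\lesssim\norm{f}_{\infty,\Sigma_\theta}$ and $\norm{\Phi_{1+iy}}_{B(\ell^2)\to B(\ell^2)}\lesssim\norm{f}_{\infty,\Sigma_\theta}$, and Stein interpolation yields $\norm{f(A_p)}_{S^p\to S^p}=\norm{\Phi_\eta}_{S^p\to S^p}\lesssim\norm{f}_{\infty,\Sigma_\theta}$. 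Running the same scheme with the functions $\lambda_0(\lambda_0-\cdot)^{-1}$ in place of $f$ shows that $A_p$ is sectorial of type $\omega_p$, so that the calculus is well defined for every $\theta>\omega_p$; a routine density argument then completes the proof.

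The main obstacle is exactly the construction of the interpolating family $f_z$ with the correct dependence on $\theta$: exhibiting $f_{1+iy}$ as the Laplace transform of a uniformly bounded measure, while only assuming $f$ holomorphic and bounded on the sector $\Sigma_\theta$ with $\theta>\omega_p$, is where the numerology $\omega_p=\pi\vert\tfrac1p-\tfrac12\vert$ genuinely enters — the $p=\infty$ endpoint offers no analyticity below angle $\tfrac{\pi}{2}$, so the gain in angle is a purely interpolatory phenomenon. Once this family is in hand, the remaining points (consistency of the multipliers across the scale, admissibility of the analytic family, the resolvent variant, and the passage from $H^{\infty}_0$ to $H^{\infty}$) are standard.
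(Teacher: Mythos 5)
Your overall architecture --- identify $A_p$ concretely as the Schur multiplier with symbol $[a_{ij}]$, take $S^2$ and the contractivity of the $T_s$ on $B(\ell^2)$ and $S^1$ as interpolation endpoints, and reach $\omega_p$ by Stein interpolation --- founders on its central step, and this is a genuine gap rather than a routine construction. Everything hinges on producing, for an \emph{arbitrary} $f\in H^{\infty}_{0}(\Sigma_\theta)$ with $\theta>\omega_p$, an analytic family $(f_z)$ with $f_\eta=f$ on $(0,\infty)$, with $\sup_y\norm{f_{iy}}_{L^\infty(0,\infty)}\lesssim\norm{f}_{\infty,\Sigma_\theta}$, and, crucially, with each $f_{1+iy}$ a Laplace transform of a measure of mass $\lesssim\norm{f}_{\infty,\Sigma_\theta}$. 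You assert this follows from a Mellin representation and a contour rotation ``in the spirit of Cowling'', but no construction is given, and the natural attempts fail: the building blocks $\lambda\mapsto\lambda^{iu}$ are not Laplace transforms of bounded measures at all, their regularizations $\lambda^{iu}e^{-\delta\lambda}$ or $\lambda^{iu}(1+\lambda)^{-k}$ have masses growing like $e^{\frac{\pi}{2}\vert u\vert}$, and the Mellin coefficients of $f$ are controlled by $\norm{f}_{\infty,\Sigma_\theta}$ only after modifications that change the function. More tellingly, nothing in your interpolation step is specific to Schur multipliers: the $S^2$ endpoint, the Hille--Phillips bound $\norm{\int T_s\,d\mu(s)}\leq\norm{\mu}$ on $B(\ell^2)$ and $S^1$, and the consistency of $f_z(A)$ across the scale are available for \emph{every} selfadjoint noncommutative diffusion semigroup. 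So if your family existed as stated, the identical argument would give a bounded $H^{\infty}(\Sigma_\theta)$ calculus, $\theta>\omega_p$, for all such semigroups --- precisely the general statement that is not known, and the reason this paper (and \cite{JLX}) must invoke extra structure (transference for Schur multipliers, factorizability and Rota dilations for Markov maps). Even in the commutative setting, the sharp-angle results are not obtained by interpolating against the $L^\infty$/Laplace-transform endpoint; a dilation or transference step producing \emph{some} bounded calculus comes first.

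For comparison, the paper keeps interpolation only where both endpoints carry an honest calculus: it first proves, for each $1<p<\infty$, a bounded $H^{\infty}(\Sigma_\theta)$ calculus for every $\theta>\frac{\pi}{2}$ by dominating $\int_0^\infty b(t)T_t\,dt$ on $S^p$ by the corresponding operator for the translation group on $L^p(\Rdb;S^p)$ (the transference result of \cite{A}), converting this into functional calculus domination via \cite[Lem. 2.12]{LM1}, and using the Hieber--Pr\"uss theorem on the UMD space $S^p$; only then does it apply the angle-reduction result \cite[Prop. 5.8]{JLX}, whose input is this already-established calculus together with selfadjointness. Your preliminary reductions (real symbol, $t_{ij}(t)=e^{-ta_{ij}}$, identification of $f(A_p)$ as the Schur multiplier with symbol $[f(a_{ij})]$, the sectoriality caveat) are essentially fine --- note only that selfadjointness on $S^2$ forces the symbol to be real, not symmetric --- but to complete the proof you would need to reinstate a transference or dilation ingredient of the above kind; the two-endpoint interpolation alone cannot deliver the conclusion.
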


\begin{proof}
For any $1<p<\infty$, let $(U_{t,p})_{t\in\footnotesize{\Rdb}}$ be the 
translation semigroup on the Bochner space $L^p(\Rdb; S^p)$. Then it follows from 
\cite[Cor. 4.3 \& Thm. 5.3]{A} that for any $b\in L^1(0,\infty)$,
$$
\Bignorm{\int_{0}^{\infty} b(t) T_t\, dt\  \colon S^p\longrightarrow S^p}
\leq \Bignorm{\int_{0}^{\infty} b(t) U_{t,p}\, dt\, \colon L^p(\Rdb; S^p)\longrightarrow L^p(\Rdb; S^p)}.
$$
Let $C_p$ be the negative generator of $(U_{t,p})_{t\in\footnotesize{\Rdb}}$. By \cite[Lem. 2.12]{LM1}, the
above inequality implies that for any $\theta>\frac{\pi}{2}$ and any $f\in H^{\infty}_{0}(\Sigma_\theta)$,
$$
\norm{f(A_p)}\leq \norm{f(C_p)}.
$$
Since $S^p$ is a UMD Banach space, $C_p$ has a bounded $H^{\infty}(\Sigma_\theta)$ functional
calculus for any $\theta>\frac{\pi}{2}$ (see e.g. \cite{HP}). Hence the above estimate implies
that in turn, $A_p$ has a bounded $H^{\infty}(\Sigma_\theta)$ functional
calculus for any $\theta>\frac{\pi}{2}$.

We assumed that $(T_t)_{t\geq 0}$ is selfadjoint. Hence by \cite[Prop. 5.8]{JLX}, the
above property holds true as well
for any $\theta>\omega_p$.
\end{proof}

Recall Definition \ref{2Pb} for polynomial boundedness.

\begin{corollary}\label{7Schur2}
Let $T\colon B(\ell^2)\to B(\ell^2)$ be a contractive Schur multiplier
associated with a real valued matrix
$[t_{ij}]_{i,j\geq 1}$ and let 
$1<p<\infty$.
\begin{enumerate}
\item [(1)] The induced operator $T\colon S^p\to S^p$
is polynomially bounded.
\item [(2)] If there exists $\delta>0$ such that $t_{ij}\geq -1+\delta$ for any
$i,j\geq 1$, then the induced operator $T\colon S^p\to S^p$ is a Ritt
opertor which admits a bounded $H^\infty(B_\gamma)$ functional calculus for some
$\gamma <\frac{1}{2}$. When $p\geq 2$,
it satisfies the conditions (1)-(2)-(3) of Corollary \ref{7NcLp}.
\end{enumerate}
\end{corollary}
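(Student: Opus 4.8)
The plan is to feed the right semigroups of Schur multipliers into Proposition \ref{7Schur1}, to turn the resulting sectorial $H^\infty$-calculus into a Stolz-domain calculus by Proposition \ref{4TA}, and -- for part (1) -- to reduce a general real symbol to a nonnegative one by splitting polynomials into their even and odd parts. Along the way I use that the $S^p$-spectrum of a Schur multiplier is the closure of the set of its symbol values.

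For (2): since $-1+\delta\leq t_{ij}\leq 1$ we have $1-t_{ij}\in[0,2-\delta]$, so the family $(T_s)_{s\geq 0}$ with symbols $\bigl(e^{-s(1-t_{ij})}\bigr)_{i,j}$ consists of contractive Schur multipliers -- writing $[e^{-s(1-t_{ij})}]=e^{-s}\sum_{n\geq 0}\tfrac{s^n}{n!}[t_{ij}^n]$ and using that each $[t_{ij}^n]$, being the symbol of $T^n$, has Schur-multiplier norm at most $1$, one gets Schur-multiplier norm at most $e^{-s}\sum_{n\geq 0}\tfrac{s^n}{n!}=1$ -- and it is selfadjoint, $w^*$-continuous, with negative generator $A:=I-T$ on $S^p$. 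By Proposition \ref{7Schur1}, $A$ admits a bounded $H^\infty(\Sigma_\theta)$ functional calculus for every $\theta\in(\omega_p,\pi)$; in particular $A$ is sectorial of type $\omega_p<\frac{\pi}{2}$ and has such a calculus for some $\theta<\frac{\pi}{2}$. Since $\sigma(T)\subset[-1+\delta,1]\subset\Ddb\cup\{1\}$, (\ref{2Ritt}) shows that $T$ is a Ritt operator, so Proposition \ref{4TA} yields a bounded $H^\infty(B_\gamma)$ functional calculus for some $\gamma<\frac{\pi}{2}$. When $p\geq 2$, the equivalence in Corollary \ref{7NcLp} converts this into the estimates (1)--(2)--(3).

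For (1): run the same reasoning for $T^2$, whose symbol $(t_{ij}^2)_{i,j}$ takes values in $[0,1]$. The semigroup with symbols $\bigl(e^{-s(1-t_{ij}^2)}\bigr)_{i,j}$ is again selfadjoint, $w^*$-continuous and contractive, with negative generator $I-T^2$ on $S^p$, so Proposition \ref{7Schur1} gives that $I-T^2$ is sectorial of type $<\frac{\pi}{2}$ with a bounded $H^\infty(\Sigma_\theta)$ functional calculus for some $\theta<\frac{\pi}{2}$. Since $\sigma(T^2)\subset[0,1]\subset\Ddb\cup\{1\}$, $T^2$ is a Ritt operator, hence by Proposition \ref{4TA} admits a bounded $H^\infty(B_\gamma)$ functional calculus; as $B_\gamma\subset\Ddb$, $T^2$ is polynomially bounded, say $\norm{\psi(T^2)}\leq K\norm{\psi}_{\infty,\Ddb}$ for all $\psi\in\P$. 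Now write a given $\varphi\in\P$ as $\varphi(z)=\varphi_e(z^2)+z\,\varphi_o(z^2)$ with $\varphi_e,\varphi_o\in\P$, so that $\varphi(T)=\varphi_e(T^2)+T\,\varphi_o(T^2)$. From $\varphi_e(w^2)=\tfrac12(\varphi(w)+\varphi(-w))$ one reads off $\norm{\varphi_e}_{\infty,\Ddb}\leq\norm{\varphi}_{\infty,\Ddb}$, and since $\varphi_o(w^2)=\tfrac{1}{2w}(\varphi(w)-\varphi(-w))$ is a polynomial in $w$, the maximum principle gives $\norm{\varphi_o}_{\infty,\Ddb}\leq\norm{\varphi}_{\infty,\Ddb}$. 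As $\norm{T}\leq 1$, it follows that $\norm{\varphi(T)}\leq\norm{\varphi_e(T^2)}+\norm{T}\,\norm{\varphi_o(T^2)}\leq 2K\norm{\varphi}_{\infty,\Ddb}$, which is assertion (1).

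The step I expect to need the most care is the claim that $T$, respectively $T^2$, is genuinely a Ritt operator on $S^p$ -- equivalently, that its $S^p$-spectrum meets $\partial\Ddb$ at most at $1$. For $p\neq 2$ a bounded symbol need not define a bounded Schur multiplier, so identifying the $S^p$-spectrum of a Schur multiplier with the closure of its symbol takes real work; this is presumably where the hypotheses (the symbol being real, respectively bounded away from $-1$) are used, together with interpolation between the Hilbert-space ($S^2$) and the $C^\ast$-algebra ($B(\ell^2)$) estimates. Once that is in hand, the rest is bookkeeping around Propositions \ref{7Schur1} and \ref{4TA}, Corollary \ref{7NcLp}, and the even/odd decomposition.
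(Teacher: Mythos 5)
Your overall architecture coincides with the paper's: feed the (norm-continuous) semigroup $T_t=e^{-t}e^{tT}$ of contractive selfadjoint Schur multipliers into Proposition \ref{7Schur1}, pass from the sectorial calculus of $I-T$ to the $H^\infty(B_\gamma)$ calculus via Proposition \ref{4TA}, use Corollary \ref{7NcLp} for the square function estimates when $p\geq 2$, and prove (1) by the even/odd decomposition $\varphi(z)=\varphi_e(z^2)+z\varphi_o(z^2)$ applied to $T^2$, whose symbol $[t_{ij}^2]$ falls under (2). All of that is correct and is exactly what the paper does.

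However, there is a genuine gap at the step you yourself flag: you need $T$ (resp.\ $T^2$) to be a Ritt operator on $S^p$ before Proposition \ref{4TA} can be invoked, and your route to this -- the inclusion $\sigma_{S^p}(T)\subset[-1+\delta,1]$, resting on the claim that the $S^p$-spectrum of a Schur multiplier is the closure of its symbol values -- is not justified and is not a valid general principle for $p\neq 2$ (a bounded symbol need not even give a bounded multiplier on $S^p$, and spectra of compatible operators on an interpolation scale can genuinely depend on $p$; the obvious candidate inverse, the multiplier with symbol $(\lambda-t_{ij})^{-1}$, cannot be handled by an absolutely convergent power series when $\vert\lambda\vert=1$). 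Nor can you bypass this via (\ref{2Ritt}) using only the conclusion of Proposition \ref{7Schur1}: sectoriality of $I-T$ of type $<\frac{\pi}{2}$ together with $\norm{T}\leq 1$ does not exclude unimodular spectrum away from $1$, since for instance $-1$ lies in $\overline{\Ddb}\cap\bigl(1-\overline{\Sigma_\omega}\bigr)$ for every $\omega>0$. The paper closes this gap differently: the Spectral Theorem applied to the selfadjoint contraction $T\colon S^2\to S^2$, whose spectrum lies in $[-1+\delta,1]$, shows that $T$ is Ritt on $S^2$, and then \cite[Lem. 5.1]{LMX1} transfers the Ritt property from $S^2$ to $S^p$ for all $1<p<\infty$, using that $T$ acts compatibly as a contraction on the whole scale; this transfer lemma (an interpolation result for the Ritt/analyticity condition) is precisely the ingredient missing from your argument. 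With that lemma in hand, the rest of your proposal goes through as written.
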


\begin{proof}
We first prove (2). Assume that $t_{ij}\geq -1+\delta$ for any  $i,j\geq 1$.
Then the spectrum of the selfadjoint map $T\colon S^2\to S^2$ is included 
in $[-1+\delta,1]$. Applying the Spectral Theorem, this readily implies
that $T\colon S^2\to S^2$ is a Ritt operator.  According to \cite[Lem. 5.1]{LMX1},
this implies that for any $1<p<\infty$, $T\colon S^p\to S^p$ is a Ritt operator.

For any $t\geq 0$, $T_t=e^{-t}e^{tT}$ is a contractive selfadjoint Schur multiplier.
Hence for any $1<p<\infty$, $A=I-T$ has a bounded 
$H^{\infty}(\Sigma_\theta)$ functional calculus on $S^p$ for any $\theta>\omega_p$, 
by Proposition \ref{7Schur1}. Note that $\omega_p<\frac{\pi}{2}$. Thus
the result now follows from Proposition \ref{4TA} and Corollary \ref{7NcLp}.

We now prove (1). Under our assumption, the square operator 
$T^2\colon B(\ell^2)\to B(\ell^2)$
is a contractive Schur multiplier, and its associated matrix is
$[t_{ij}^2]_{i,j\geq 1}$. Hence $T^2$ satisfies part (2) of the
present corollary. Let $1<p<\infty$.
Since polynomial boundedness is implied by the existence of a bounded
$H^\infty(B_\gamma)$ functional calculus, we deduce
from (2) that there exists a constant $K_p\geq 1$ such that 
$$
\norm{\varphi(T^2)}_{B(S^p)}\,\leq K_p\,\norm{\varphi}_{\infty,\Ddb},\qquad \varphi\in\P.
$$
Any polynomial $\varphi$ admits a (necessarily unique) decomposition 
$$
\varphi(z)=\varphi_1(z^2) + z \varphi_2(z^2)
$$
and it is easy to check that 
$$
\norm{\varphi_1}_{\infty,\Ddb}\leq\norm{\varphi}_{\infty,\Ddb}
\qquad\hbox{and}\qquad
\norm{\varphi_2}_{\infty,\Ddb}\leq \norm{\varphi}_{\infty,\Ddb}.
$$
Writing $\varphi(T)=\varphi_1(T^2) + T\varphi_2(T^2)$, we deduce that
\begin{align*}
\norm{\varphi(T)}_{B(S^p)} &\leq \norm{\varphi_1(T^2)}_{B(S^p)}+ \norm{\varphi_2(T^2)}_{B(S^p)}\\
& \leq K_p
\bigl(\norm{\varphi_1}_{\infty,\Ddb}+\norm{\varphi_2}_{\infty,\Ddb}\bigr)\\
& \leq 2K_p
\norm{\varphi}_{\infty,\Ddb}.
\end{align*}
\end{proof}

We now turn to our second class of examples.
Here we assume that $\tau$ is finite and normalized, that is, $\tau(1)=1$. In this case,
$M\subset L^p(M)$ for any $1\leq p<\infty$. Following \cite{HM, R}, we
say that a linear map $T\colon M\to M$ is a Markov map if $T$ is unital, completely positive
and trace preserving. As is well-known, such a map is necessarily normal and for any
$1\leq p<\infty$, it extends to a contraction $T_p\colon L^p(M)\to L^p(M)$.
We say that $T$ is selfadjoint if its $L^2$-realization $T_2$
is selfadjoint in the usual Hilbertian sense.

Applying the techniques developed so far,
the following analog of Corollary \ref{7Schur2} is a rather 
direct consequence of some recent work of
M. Junge, \'E. Ricard and D. Shlyakhtenko.

\begin{proposition}\label{7Markov}
Let $T\colon M\to M$ be a selfadjoint Markov map.
\begin{enumerate}
\item [(1)] For any $1<p<\infty$, the operator $T_p\colon L^p(M)\to L^p(M)$
is polynomially bounded.
\item [(2)] If $-1\notin\sigma(T_2)$, then for any $1<p<\infty$,  
$T_p\colon L^p(M)\to L^p(M)$ is a Ritt
opertor which admits a bounded $H^\infty(B_\gamma)$ functional calculus for some
$\gamma <\frac{1}{2}$. When $p\geq 2$,
it satisfies the conditions (1)-(2)-(3) of Corollary \ref{7NcLp}.
\end{enumerate}
\end{proposition}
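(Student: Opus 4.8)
The plan is to imitate the proof of Corollary~\ref{7Schur2}, replacing its input Proposition~\ref{7Schur1} by the analogous statement for selfadjoint Markov maps. The one substantial external ingredient is: if $(S_t)_{t\geq 0}$ is a selfadjoint Markov semigroup on $M$ and $-B_p$ is the generator of its realization on $L^p(M)$, then for $1<p<\infty$ and every $\theta\in(\omega_p,\pi)$, $B_p$ admits a bounded $H^{\infty}(\Sigma_\theta)$ functional calculus; this is exactly where the recent work of Junge, Ricard and Shlyakhtenko \cite{JRS} enters, since their dilation theorem produces, for any selfadjoint Markov semigroup, a dilation by a group of trace preserving $*$-automorphisms, and from there the argument is the one in the proof of Proposition~\ref{7Schur1} (transference to a translation group on a UMD noncommutative $L^p$-space, then the reduction of the angle from $\frac{\pi}{2}$ to $\omega_p$ using selfadjointness, as in \cite[Prop.\ 5.8]{JLX}). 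I would then apply this to the (norm continuous) semigroup $T_t=e^{-t}e^{tT}$: since $T$ is unital, completely positive and trace preserving, so is each $e^{-t}e^{tT}=e^{-t}\sum_{n\geq 0}\frac{t^n}{n!}T^n$, a norm convergent convex combination of the Markov maps $T^n$, and selfadjointness of $T$ on $L^2(M)$ passes to $T_t$; hence $(T_t)_{t\geq 0}$ is a selfadjoint Markov semigroup, its generator on $M$ is $T-I$, and its $L^p$-generator is $-(I-T_p)$. Writing $A_p=I-T_p$, we conclude that $A_p$ has a bounded $H^{\infty}(\Sigma_\theta)$ functional calculus for every $\theta\in(\omega_p,\pi)$, and $\omega_p<\frac{\pi}{2}$. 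I expect the invocation of \cite{JRS} to be the only non-routine point, the rest being bookkeeping with earlier results of the paper.

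For part (2), assume $-1\notin\sigma(T_2)$. As $T_2$ is a selfadjoint contraction on $L^2(M)$, its spectrum is contained in some interval $[-1+\delta,1]$, so the Spectral Theorem shows that $T_2$ is a Ritt operator; since $T$ has consistent contractive realizations on every $L^p(M)$ and $T_2^*=T_2$, \cite[Lem.\ 5.1]{LMX1} then gives that $T_p$ is a Ritt operator for each $1<p<\infty$. Combining the conclusion of the previous paragraph (pick $\theta\in(\omega_p,\frac{\pi}{2})$) with the transfer principle of Proposition~\ref{4TA}, $T_p$ admits a bounded $H^{\infty}(B_\gamma)$ functional calculus for some $\gamma<\frac{\pi}{2}$. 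For $p\geq 2$, the `only if' implication of Corollary~\ref{7NcLp} then immediately yields that $T_p$ is $R$-Ritt and satisfies the estimates (1)--(3) of that statement (alternatively, $R$-Ritt follows from Proposition~\ref{6Delta}, since $L^p(M)$ is UMD, hence has property $(\Delta)$).

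For part (1), I would apply part (2) to $T^2$, which is again a selfadjoint Markov map (a composition of Markov maps) whose $L^2$-realization $T_2^2$ is positive, so that $-1\notin\sigma(T_2^2)$. Thus $(T_p)^2=(T^2)_p$ admits a bounded $H^{\infty}(B_\gamma)$ functional calculus and in particular is polynomially bounded: $\norm{\varphi((T_p)^2)}\leq K_p\norm{\varphi}_{\infty,\Ddb}$ for all $\varphi\in\P$, with some $K_p\geq 1$. I would then split an arbitrary $\varphi\in\P$ into its even and odd parts, $\varphi(z)=\varphi_1(z^2)+z\,\varphi_2(z^2)$ with $\varphi_1(z^2)=\tfrac12(\varphi(z)+\varphi(-z))$ and $z\,\varphi_2(z^2)=\tfrac12(\varphi(z)-\varphi(-z))$; the maximum modulus principle gives $\norm{\varphi_1}_{\infty,\Ddb}\leq\norm{\varphi}_{\infty,\Ddb}$ and $\norm{\varphi_2}_{\infty,\Ddb}\leq\norm{\varphi}_{\infty,\Ddb}$, so that, using $\varphi(T_p)=\varphi_1((T_p)^2)+T_p\,\varphi_2((T_p)^2)$ and $\norm{T_p}\leq 1$, one obtains $\norm{\varphi(T_p)}\leq 2K_p\norm{\varphi}_{\infty,\Ddb}$; hence $T_p$ is polynomially bounded. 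This last step is exactly the computation carried out for Corollary~\ref{7Schur2}(1).
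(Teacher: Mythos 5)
Most of your argument coincides with the paper's: the reduction of both parts to showing that $A_p=I-T_p$ is sectorial with a bounded $H^\infty(\Sigma_\theta)$ calculus for some $\theta<\frac{\pi}{2}$, the use of the Spectral Theorem plus \cite[Lem. 5.1]{LMX1} to get the Ritt property, the transfer via Proposition \ref{4TA} and Corollary \ref{7NcLp}, and the even/odd splitting applied to $T^2$ for polynomial boundedness are exactly the scheme of Corollary \ref{7Schur2}, which is what the paper's proof of this proposition invokes.

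The gap is in the one step you yourself single out as non-routine. You attribute to \cite{JRS} a dilation of an arbitrary selfadjoint Markov semigroup by a one-parameter group of trace preserving $*$-automorphisms, i.e.\ a simultaneous representation $T_t=\mathbb{E}\, U_t\,\iota$ for all $t\geq 0$, and you then want to run the transference argument of Proposition \ref{7Schur1}. Neither half of this is justified as stated. What \cite{JRS} is used for in the paper is only that each individual $T_t=e^{-t(I-T)}$ is \emph{factorizable} in the sense of \cite{AD,HM}; a factorizable map admits a dilation by a single automorphism (Haagerup--Musat), but producing one fixed group $(U_t)_t$ dilating the whole continuous semigroup is a much stronger statement which you cannot simply quote. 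Moreover, the transference inequality in the proof of Proposition \ref{7Schur1} is not generic: it comes from Arhancet's theorem \cite{A}, which is specific to semigroups of Schur multipliers, so even granting a group dilation you would still have to rebuild the transference step (via \cite{HP}-type arguments on the UMD space $L^p(N)$) rather than cite it. The paper's route avoids all of this: from factorizability of $T_{t/2}$ and the identity $T_t=T_{t/2}^{\,2}$, \cite[Thm. 5.3]{HM} gives that each $T_t$ satisfies the Rota dilation property of \cite[Def. 10.2]{JLX}; sectoriality of $A_p$ comes from \cite[Prop. 5.4]{JLX} since $(T_t)_{t\geq 0}$ is a noncommutative diffusion semigroup; and the reasoning of \cite[10.D]{JLX} (a martingale, Stein-type argument, not transference) then yields the bounded $H^\infty(\Sigma_\theta)$ calculus for every $\theta>\omega_p$. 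If you replace your dilation-plus-transference step by this factorizability/Rota-property argument, the rest of your proof goes through unchanged.
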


\begin{proof}
Let $A_p=I_{L^p(M)} -T_p$ for any $1<p<\infty$. Repeating the 
method applied to deduce Corrolary \ref{7Schur2} from Proposition \ref{7Schur1},
we see that it suffices to show that for any $1<p<\infty$, 
$A_p$ is sectorial and admits a bounded 
$H^\infty(\Sigma_\theta)$ functional
calculus for some $\theta<\frac{\pi}{2}$.

For that purpose, consider 
$$
T_t = e^{-t(I-T)},\qquad t\geq 0.
$$
Then $(T_t)_{t\geq 0}$ 
is a `noncommutative diffusion semigroup' in the sense of \cite[Chapter 5]{JLX}, and
for any $1<p<\infty$, $-A_p$ is the generator of its $L^p$-realization. Hence
$A_p$ is sectorial by \cite[Prop. 5.4]{JLX}.

According to \cite{JRS}, each $T_t$ is `factorizable' in the sense of
\cite[Def. 6.2]{AD} or \cite[Def. 1.3]{HM}. Writing $T_t=T_{\frac{t}{2}}^{2}$ and 
using \cite[Thm. 5.3]{HM} we deduce that each $T_t$ satisfies the `Rota dilation 
property' introduced in \cite[Def. 10.2]{JLX} (see also \cite[Def. 5.1]{HM}).

We deduce the result by applying the reasoning in \cite[10.D]{JLX}.
Indeed  it is implicitly
shown there that whenever $(T_t)_{t\geq 0}$
is a diffusion semigroup on a finite von Neumann algebra such that each $T_t$
satisfies the Rota dilation 
property, then the negative generator of its $L^p$-realization admits a 
bounded $H^\infty(\Sigma_\theta)$ functional
calculus for any $\theta>\omega_p$.
\end{proof}

\begin{remark}\label{7Rk1}\

(1) About the necessity of having two parts in Proposition
\ref{7Markov}, we note that $L^p$-realizations of selfadjoint
Markov maps are not necessarily 
Ritt operators. For instance,
the mapping $T\colon\ell^{\infty}_{2}\to \ell^{\infty}_{2}$ defined by $T(t,s)=(s,t)$
is a Markov map but $-1\in\sigma(T)$.

\smallskip 
(2) If $T\colon M\to M$ satisfies the Rota dilation 
property, then it is a Markov map and its $L^2$-realization
is positive in the Hilbertian sense. Hence it satisfies Proposition
\ref{7Markov}. In this case, the latter statement strengthens 
\cite[Cor. 10.9]{JLX}, where weaker square
function estimates were established for opertors 
with the Rota dilation 
property. 

\smallskip 
(3) For any selfadjoint Schur multiplier (resp. Markov map) $T$, the square operator
$T^2$ satisfies the second part of Corollary \ref{7Schur2} (resp. Proposition \ref{7Markov}). 
Hence it satisfies an estimate
$$
\biggnorm{\sum_{k=1}^{\infty} k^{\frac{1}{2}}\varepsilon_k\otimes \bigl(
T^{k-1}(x) -T^{k+1}(x)\bigr)}_{{\rm Rad}(L^p(M))}\,\lesssim\, \norm{x}_{L^p}.
$$
\end{remark}


\begin{thebibliography}{99}
\bibitem{AB} W. Arendt, and S. Bu, {\it The operator-valued Marcinkiewicz
multiplier theorem and maximal regularity}, Math. Z. 240 (2002),
311-343.
\bibitem{AD} C. Anantharaman-Delaroche, {\it On ergodic theorems for free group actions on noncommutative spaces},
Probab. Theory Relat. Fields 135 (2006), 520-546.
\bibitem{A} C. Arhancet, {\it On Matsaev's conjecture for contractions on noncommutative
$L_p$-spaces}, J. Operator Theory, to appear (arXiv:1009.1292).
\bibitem{ALM} C. Arhancet, and C. Le Merdy, {\it Dilation of Ritt operators on $L^p$-spaces}, Preprint 2011,
arXiv:1106.1513.
\bibitem{BBC} C. Bad\'ea, B. Beckermann, and M. Crouzeix, {\it Intersections of several disks of the Riemann sphere as $K$-spectral sets}, Comm. Pure Appl. Anal. 8 (2009), 37-54.
\bibitem{BG} E. Berkson, and T. A. Gillespie, {\it Spectral decompositions and
harmonic analysis on UMD Banach spaces}, Studia Math. 112 (1994), 13-49.
\bibitem{Bl1} S. Blunck, {\it Maximal regularity of discrete and continuous time evolution equations},
Studia Math. 146  (2001),  no. 2, 157-176.
\bibitem{Bl2} S. Blunck, {\it Analyticity and discrete maximal regularity on $L_p$-spaces},
J. Funct. Anal.  183  (2001),  211-230.
\bibitem{CPSW} P. Cl\'{e}ment, B. de Pagter, F. A. Sukochev, and H. Witvliet,
{\it Schauder decompositions and multiplier theorems}, Studia
Math. 138 (2000), 135-163.
\bibitem{CDMY} M. Cowling, I. Doust, A. McIntosh, and A. Yagi, {\it Banach
space operators with a bounded $H^{\infty}$ functional calculus},
J. Aust. Math. Soc., Ser. A  60 (1996), 51-89.
\bibitem{C} M. Crouzeix, {\it Numerical range and functional calculus in Hilbert space},
J. Funct. Anal. 244 (2007), 668-690.
\bibitem{DeL} R. de Laubenfels, {\it Similarity to a contraction, 
for power-bounded operators with finite peripheral spectrum},  Trans. Amer. Math. Soc. 350 (1998), 3169-3191. 
\bibitem{DD} B. Delyon, and F. Delyon, {\it Generalization of von Neumann's spectral sets and
integral representation of operators}, Bull. Soc. Math. France 127 (1999), 25-42.
\bibitem{DJT} J. Diestel, H. Jarchow, and A. Tonge, {\it Absolutely summing operators}, 
Cambridge Studies in Advanced Mathematics 43, Cambridge University Press, Cambridge, 1995. xvi+474 pp. 
\bibitem{Dow} H. R. Dowson, {\it Spectral theory of linear
operators}, London Math. Soc. Monographs 12, Academic Press, London-New York, 1978. xii+422 pp. 
\bibitem{FMI} E. Franks, and A. McIntosh, {\it Discrete quadratic estimates and 
holomorphic functional calculi in Banach spaces},
Bull. Austral. Math. Soc. 58 (1998), 271-290. 
\bibitem{Ga} J. B. Garnett, {\it Bounded analytic functions}, 
Pure and Applied Mathematics, 96. Academic Press, New York-London, 1981. xvi+467 pp. 
\bibitem{HM} U. Haagerup, and M. Musat, {\it Factorization and dilation problems 
for completely positive maps on von Neumann algebras}, Commun. Math. Phys. 303 (2011), 555-594.
\bibitem{H} B. Haak, {\it Kontrolltheorie in Banachr\"aumen und quadratische absch\"atzungen}, Ph.D, 2004.
\bibitem{HK} B. Haak, and P. C. Kunstmann, {\it Admissibility of unbounded operators and wellposedness of linear systems
in Banach spaces}, Integr. Equ. Op. Theory 55 (2006), 497-533.
\bibitem{HH} B. Haak, and M. Haase, {\it Square function estimates and functional calculus}.
\bibitem{Ha0} M. Haase, Spectral mapping theorems for holomorphic functional calculi, 
{\it J. London Math. Soc.} 71 (2005), 723-739.
\bibitem{Ha} M. Haase, {\it The functional calculus for sectorial operators}, 
Operator Theory: Advances and Applications, 169, Birkh\"auser Verlag, Basel, 2006. xiv+392 pp.
\bibitem{HP} M. Hieber, and J. Pr\"uss, {\it Functional calculi for linear operators 
in vector-valued $L^p$-spaces via the transference principle},
Adv. Diff. Eq. 3 (1998), 847-872.
\bibitem{JLX} M. Junge, C. Le Merdy, and Q. Xu, {\it $H^{\infty}$-functional 
calculus and square functions on noncommutative
$L^p$-spaces}, Soc. Math. France, Ast\'erisque 305, 2006. vi+138 pp.
\bibitem{JRS} M. Junge, \'E. Ricard, and D. Shlyakhtenko, 
{\it Noncommutative diffusion semigroups and free probability}, in preparation.
\bibitem{KaiWei} C. Kaiser, and L. Weis, {\it Wavelet transform for functions with values in 
UMD spaces}, Studia Math. 186 (2008), 101-126.
\bibitem{KP} N. J. Kalton, and P. Portal, {\it Remarks on $\ell_1$ and $\ell_\infty$-maximal 
regularity for power bounded operators}, J. Aust. Math. Soc. 84 (2008), 345-365.
\bibitem{KaW1} N. J. Kalton, and L. Weis, {\it The $H^\infty$-calculus and sums of closed operators},
Math. Ann. 321 (2001), 319-345.
\bibitem{KaW2} N. J. Kalton, and L.  Weis, {\it The $H^\infty$-functional
calculus and square function estimates}, Unpublished manuscript (2004).
\bibitem{KL} C. Kriegler, and C. Le Merdy, {\it Tensor extension properties of 
$C(K)$-representations and applications
to unconditionality}, J. Aust. Math. Soc. 88 (2010), 205-230.
\bibitem{KW} P. C. Kunstmann, and L. Weis,
{\it Maximal $L_p$-regularity for parabolic equations, Fourier multiplier theorems and $H^\infty$-functional calculus},
in ``Functional analytic methods for evolution equations",  pp. 65-311, Lecture Notes in Math., 1855, 
Springer, Berlin, 2004.
\bibitem{Kwa} S. Kwapien, {\it On Banach spaces containing $c_0$}, Studia Math. 52 (1974), 187-188.
\bibitem{LL} F. Lancien, and C. Le Merdy, {\it On functional calculus properties of Ritt operators}, Preprint 2012.
\bibitem{LM1} C. Le Merdy, {$H^\infty$-functional calculus and applications to maximal regularity},
Publ. Math. Besan\c con 16 (1998), 41-77.
\bibitem{LM2} C. Le Merdy, {\it The similarity problem for bounded analytic semigroups
on Hilbert spaces}, Semigroup Forum 56 (1998), 205-224.
\bibitem{LM5} C. Le Merdy, {\it The Weiss conjecture for bounded analytic semigroups},
J. London Math. Soc. 67 (2003), 715-738.
\bibitem{LM4} C. Le Merdy, {\it A sharp equivalence between $H^\infty$ functional 
calculus and square function estimates}, J. Evolution Equations, to appear (arXiv:1111.3719).
\bibitem{LMX1} C. Le Merdy, and Q. Xu, {\it Maximal theorems and square functions for analytic operators on 
$L^p$-spaces}, J. London Math. Soc., to appear (arXiv:1011.1360).
\bibitem{LMX2} C. Le Merdy, and Q. Xu, {\it Strong $q$-variation inequalities for analytic semigroups}, 
Annales Inst. Fourier, to appear (arXiv:1103.2874).
\bibitem{LT2} J. Lindenstrauss, and L. Tzafriri, {\it Classical Banach spaces
II},  Springer-Verlag, Berlin-New York, 1979. x+243 pp.
\bibitem{LP} F. Lust-Piquard,
{\it In\'egalit\'es de Khintchine dans $C^p\;(1<p<\infty)$} (French), 
C. R. Acad. Sci. Paris S\'er. I Math. 303 (1986), 289-292. 
\bibitem{LPP} F. Lust-Piquard, and G. Pisier, {\it Noncommutative Khintchine and Paley inequalities},
Ark. Mat. 29 (1991), 241-260. 
\bibitem{Ly} Yu. Lyubich, {\it Spectral localization, power boundedness and invariant subspaces
under Ritt's type condition}, Studia Math. 134 (1999), 153-167.
\bibitem{MI} A. McIntosh, {\it Operators which have an $H^\infty$ functional calculus}, Proc. CMA Canberra
14 (1986), 210-231.
\bibitem{NZ} B. Nagy, and J. Zemanek, {\it A resolvent condition implying power boundedness}, 
Studia Math. 134 (1999), 143-151.
\bibitem{N} O. Nevanlinna, {\it Convergence of iterations for linear equations}, 
Lectures in Mathematics ETH Z\"'urich, Birkh\"auser Verlag, Basel, 1993. viii+177 pp.
\bibitem{Pa1} V. I. Paulsen, {\it Toward a theory of $K$-spectral sets}, in
``Surveys of some recent results in operator theory, Vol. I",  pp. 221–240, 
Pitman Res. Notes Math. Ser., 171, Longman Sci. Tech., Harlow, 1988. 
\bibitem{Pa2} V. I. Paulsen, {\it Completely bounded maps and operator algebras},
Cambridge Studies in Advanced Mathematics, 78. 
Cambridge University Press, Cambridge, 2002. xii+300 pp.
\bibitem{P0} G. Pisier, {\it Some results on Banach spaces without local
unconditional structure}, Compos. Math.  37 (1978), 3-19.
\bibitem{P1} G. Pisier, {\it Non-commutative vector valued  $L_p$-spaces
and completely $p$-summing maps}, Soc. Math. France, Ast\'erisque
247, 1998. vi+131 pp.
\bibitem{P4} G. Pisier, {\it Similarity problems and completely bounded maps}
(Second, expanded edition), Lecture Notes in Mathematics, 1618 Springer-Verlag, Berlin, 2001. viii+198 pp. 
\bibitem{PX0} G. Pisier, and Q. Xu, {\it Non-commutative martingale inequalities},
Comm. Math. Phys.  189  (1997), 667-698.
\bibitem{PX} G. Pisier, and Q. Xu, {\it Non-commutative
$L^p$-spaces}, pp. 1459-1517 in ``Handbook of the Geometry of
Banach Spaces", Vol. II, edited by W.B. Johnson and J.
Lindenstrauss, Elsevier, 2003.
\bibitem{R} \'E. Ricard, {\it A Markov dilation for self-adjoint Schur multipliers},
Proc. Amer. Math. Soc. 136  (2008), 4365-4372.
\bibitem{Rud} W. Rudin, {\it Real and complex analysis}
(Third edition), McGraw-Hill Book Co., New York, 1987. xiv+416 pp.
\bibitem{S2} E.M. Stein, {\it Topics in harmonic analysis related
to the Littlewood-Paley theory}, Ann. Math. Studies, Princeton,
University Press, 1970.
\bibitem{V} P. Vitse, {\it A band limited and Besov class functional calculus for Tadmor-Ritt operators}, 
Arch. Math. (Basel)  85  (2005), 374-385.
\bibitem{V2} P. Vitse, {\it A Besov class functional calculus for bounded holomorphic semigroups},
J. Funct. Anal. 228 (2005), 245-269.
\bibitem{W1} L. Weis, {\it Operator valued Fourier multiplier theorems
and maximal regularity}, Math. Ann. 319 (2001), 735-758.
\end{thebibliography}
\end{document}